\numberwithin{equation}{section}
\theoremstyle{plain}
\newtheorem{theorem}[equation]{Theorem}
\newtheorem*{mainthm}{Main Theorem}
\newtheorem{lemma}[equation]{Lemma}
\newtheorem{proposition}[equation]{Proposition}
\newtheorem{corollary}[equation]{Corollary}
\theoremstyle{remark}
\newtheorem{remark}[equation]{Remark}
\newtheorem*{remintro}{Remark}
\theoremstyle{definition}
\newtheorem{definition}[equation]{Definition}
\newtheorem{notation}[equation]{Notation}
\newtheorem{example}[equation]{Example}
\def\Pic{\operatorname{Pic}}
\def\Sym{\operatorname{Sym}}
\newcommand{\bP}{\mathbb{P}}
\newcommand{\bA}{\mathbb{A}}
\newcommand{\bQ}{\mathbb{Q}}
\newcommand{\bZ}{\mathbb{Z}}
\newcommand{\bF}{\mathbb{F}}
\newcommand{\bC}{\mathbb{C}}
\newcommand{\fH}{\mathfrak{H}}
\newcommand{\calA}{\mathcal{A}}
\newcommand{\calF}{\mathcal{F}}
\newcommand{\calH}{\mathcal{H}}
\newcommand{\calN}{\mathcal{N}}
\newcommand{\calM}{\mathcal{M}}
\newcommand{\calO}{\mathcal{O}}
\newcommand{\calL}{\mathcal{L}}
\newcommand{\calP}{\mathcal{P}}
\newcommand{\calX}{\mathcal{X}}
\newcommand{\calD}{\mathcal{D}}
\newcommand{\SL}{\mathrm{SL}}
\newcommand{\sX}{\mathscr{X}}
\newcommand{\Gm}{\mathbb{G}_m}
\newcommand{\Es}{\widetilde{E}_7}
\newcommand{\Ee}{\widetilde{E}_8}
\newcommand{\Er}{\widetilde{E}_r}
\newcommand{\Proj}{\mathrm{Proj}}
\newcommand{\Gr}{\mathrm{Gr}}
\newcommand{\gquot}{/\!\!/}
\newcommand{\oD}{\overline{D}}
\newcommand{\oV}{\overline{V}}
\newcommand{\oL}{\overline{L}}
\newcommand{\oH}{\overline{H}}
\newcommand{\II}{\textrm{II}}
\newcommand{\III}{\textrm{III}}
\title[KSBA for degree $2$ $K3$ pairs]{The KSBA compactification for the moduli space of degree two $K3$ pairs} 
\author[R. Laza]{Radu Laza}
\address{Stony Brook University, Department of Mathematics,  Stony Brook, NY 11794}
\email{rlaza@math.sunysb.edu}
\thanks{The author was partially supported by NSF grant DMS-0968968 and a Sloan Fellowship}
\begin{document}
\bibliographystyle{amsalpha}

\begin{abstract} Inspired by the ideas of the minimal model program,   Shepherd-Barron, Koll\'ar, and Alexeev have constructed a geometric compactification for the moduli space of surfaces of log general type. In this paper, we discuss one of the simplest examples that fits into this framework:  the case of pairs $(X,H)$ consisting of a degree two $K3$ surface $X$   and an ample divisor $H$. Specifically, we construct and describe explicitly a  geometric compactification $\overline \calP_2$  for the moduli of degree two $K3$ pairs. This compactification has a natural forgetful map to the Baily--Borel compactification of the moduli space $\calF_2$ of degree two $K3$ surfaces. Using this map and the modular meaning of $\overline \calP_2$, we obtain a better understanding  of the geometry of the  standard compactifications  of $\calF_2$. 
\end{abstract}

\maketitle
\section*{Introduction}
The search for geometric compactifications for moduli spaces is one of the central problems in algebraic geometry. After the successful constructions of compactifications for the moduli spaces of curves (Deligne--Mumford), and   abelian varieties (Mumford, Namikawa, Alexeev, and others), a case that attracted a great deal of interest was that of polarized $K3$ surfaces (e.g. \cite{fmbook}). Similar to the case of abelian varieties,  the moduli space of polarized $K3$ surfaces is a locally symmetric variety and as such it has several compactifications, the most commonly studied being the Baily-Borel and  toroidal compactifications. Unfortunately, very little is known about the geometric meaning of those. The best understood situation is that of low degree $K3$ surfaces where algebraic constructions for the moduli space are available via GIT.  Namely, for degree $2$ (and similarly for degree $4$), Shah  constructed a compactification $\widehat \calM$ for the moduli of  degree $2$ $K3$ surfaces which has several good properties (see Thm. \ref{thmshah2}). For instance, $\widehat \calM$ is an Artin stack with weak modular meaning (in the sense of GIT): $\widehat \calM$ parameterizes degenerations of $K3$ surfaces that are Gorenstein and have at worse semi-log-canonical singularities.

The space $\widehat{\calM}$ was constructed by Shah \cite{shah} as a partial Kirwan desingularization of the GIT quotient $\overline \calM$ for sextic curves (see also \cite{kirwanlee}). Alternatively, for any degree,  the  moduli space of polarized $K3$ surfaces is isomorphic to a locally symmetric variety $\calD/\Gamma_d$. Then,  the space $\calD/\Gamma_d$ has a natural compactification, the Baily--Borel compactification $(\calD/\Gamma_d)^*$. 
For degree $2$, as shown by Looijenga \cite{looijengavancouver}, $\widehat{\calM}$ is a small partial resolution of $(\calD/\Gamma_2)^*$, and is in fact a semi-toric compactification in the sense of  \cite{looijengacompact} (see Thm. \ref{thmlooijenga}). Thus, $\widehat{\calM}$  has a dual description which gives complementary information: the GIT construction provides some  geometric meaning to the boundary, and, on the other hand, the semi-toric construction gives a rich structure, which can be further exploited in applications.  Arguably $\widehat \calM$ is the ``best'' compactification for the moduli space $\calF_2$ of degree two $K3$ surfaces known at this point.

The issue is that $\widehat \calM$ is not modular in the usual sense: it fails to be separated at the boundary. While one might hope that some toroidal compactification $\overline{\calD/\Gamma_2}^\Sigma$ (refining $\widehat \calM$ and $(\calD/\Gamma_2)^*$) would give a modular compactification for $\calF_2$, as in the case of abelian varieties (see \cite{namikawabook}, \cite{alexeevabelian}), this is not known and seems out-of-reach (see however \cite{olsson} and Remark \ref{remtoroidal}). In this paper, we go in a different direction. Namely, we modify the moduli problem and construct a modular compactification $\overline{\calP}_2$ of the corresponding moduli space, which admits a forgetful map $\overline{\calP}_2\to(\calD/\Gamma_2)^*$ (generically a $\bP^2$-fibration). In other words, we obtain a fibration with modular meaning over some compactification of $\calF_2$. We note that $\overline{\calP}_2$ sheds further light on the geometric meaning on the standard compactifications (e.g. GIT, Baily-Borel) of $\calF_2$ and we expect it to play an important role in the elusive search for a geometric compactification for the moduli of  $K3$ surfaces.

Concretely, we consider the moduli space $\calP_2$ of pairs $(X,H)$ consisting of a degree $2$ $K3$ surface and an ample divisor of degree $2$. There is a natural forgetful map $\calP_2\to \calF_2$ given by $(X,H)\to (X,\calO_X(H))$, that makes $\calP_2$ a $\bP^2$-bundle over the moduli space of degree $2$ $K3$ surfaces. We compactify $\calP_2$ using the framework introduced by Koll\'ar--Shepherd-Barron \cite{ksb} and Alexeev \cite{alexeevpairs0} (called KSBA in what follows) and the $\epsilon$-coefficient approach pioneered by Hacking \cite{hacking}. The main idea of this approach is to view a degree $2$ pair as a log general type pair  $(X,\epsilon H)$ and to compactify by allowing stable pairs (i.e. require $(X,\epsilon H)$ to have slc singularities and $H$ to be ample). Then, a geometric compactification for $\calP_2$ exists by general principles in the minimal model program (MMP).  In fact, the same is true for all degrees, and thus one obtains geometric compactifications $\overline \calP_d$ for all degrees $d\in 2\bZ_+$ (see  Cor. \ref{pdthm}). The issue is that it is very difficult to understand $\overline \calP_d$ directly. The main result of the paper is to construct $\overline \calP_2$ explicitly and to describe the boundary pairs. We summarize the main result as follows:

\begin{mainthm}
Let $\calF_2$ and $\calP_2$ be the moduli space of degree two $K3$ surfaces and degree two pairs respectively. There exists a geometric compactification $\overline{\calP}_2$ of $\calP_2$ parameterizing stable degree $2$ pairs (Def. \ref{defstable}) and a natural map $\overline{\calP}_2\to (\calD/\Gamma_2)^*$ to the Baily-Borel compactification extending the forgetful map $\calP_2\to \calF_2$. Furthermore, there exist six irreducible boundary components for $\overline{\calP}_2$  of dimensions: $3$, $4$, $10$, $12$, $13$, and $19$ respectively. The geometric meaning of these components is described in Table \ref{table1} (see Theorems \ref{thmtype2}  and \ref{thmtype3} and Table \ref{table2} for further details). 
\end{mainthm}
\begin{table}[htb!]
\renewcommand{\arraystretch}{1.3}
\begin{tabular}{|l|l|l|l|l|}
\hline
&Description (generic point)& dim& Type II Case &  Type III\\
\hline\hline
1& $X=V_1\cup_E V_2$, $V_i\cong\bP^2$&3&$A_{17}$& $E$ nodal\\
\hline
2& $X=V_1\cup_E V_2$, $V_i$ are deg. $1$ del Pezzos&19&$E_8^{2}+A_1$ (A)& $E$ nodal\\
\hline
3&$X^\nu$ is a quadric in $\bP^3$, double curve $E$&4&$D_{16}+A_1$& $E$ nodal, or\\
&&&&$E=C_1\cup C_2$\\
\hline
4&$X^\nu$  deg. 2 del Pezzo, double curve $E$&10&$E_7+D_{10}$ (A)&$E$ nodal\\
\hline
5&$X$ is rational with an $\widetilde E_8$ singularity&12&$E_8^{2}+A_1$ (B)& $T_{2,3,7}$\\
\hline
6&$X$ is rational with an $\widetilde E_7$ singularity&13&$E_7+D_{10}$ (B)& $T_{2,4,5}$\\
\hline
\end{tabular}
\vspace{0.2cm}
\caption{Boundary components of $\overline \calP_2$}\label{table1}
\end{table}
We recall that the Baily-Borel compactification $(\calD/\Gamma_2)^*$ is obtained by adding four rational curves to  $\calF_2$ (see Thm. \ref{thmbb} and Fig. \ref{fig4}). Each of the six boundary components of $\overline \calP_2$ will map to one of the four Baily-Borel boundary components, giving them  a fibration structure over $\bP^1$. For instance, the three dimensional boundary component of $\overline \calP_2$ corresponding to the first case of Table \ref{table1} is a $\bP^2$-fibration over $\bP^1$ (the closure of the Type II Baily--Borel boundary component $\II_{A_{17}}$). For further details see the following remark and Sections \ref{secttype2} and \ref{secttype3}. 

\begin{remintro}
Here we make some comments on the content of Table \ref{table1}. The boundary components are labeled by the cases of Proposition \ref{classifydeg2}. The second column describes the generic stable pair $(X,H)$ parameterized by a boundary component. The class of the polarizing divisor $H$ is easily determined in each case, and we omit it from the description.  In the table, $E$ refers to an  anticanonical divisor on some (normalized) component of $X$. The map sending a boundary component in $\overline \calP_2$ to a Baily-Borel boundary component (which is isomorphic to $\bP^1$) is given by the $j$-invariant of $E$. The division into Type II (i.e. $E$ smooth) cases  is discussed in Section \ref{secttype2}. The column labeled Type III describes the generic degeneracy condition to get a Type III case (see Section \ref{secttype3}). Note that in case (3) there are two (codimension $1$) possibilities for the degenerations of $E$: either a nodal quartic curve in $\bP^3$ or a union of two hyperplane sections of a quadric in $\bP^3$. 
\end{remintro}

Our approach to understanding $\overline \calP_2$ is to relate this space to a GIT quotient for pairs. Specifically, we first construct a GIT quotient $\widehat{\calP}_2$  and a natural forgetful map $\widehat{\calP}_2\to \widehat{\calM}$ (see Thm. \ref{thmgitpair}) by including the GIT analysis of Shah \cite{shah} into a larger VGIT problem that takes into account the polarization divisor as well. This VGIT set-up is quite similar to that of  \cite{raduthesis}. To get an idea of the set-up and of why considering divisors instead of line bundles is relevant, we recommend the reader to see first the example  discussed in \S\ref{sectexample}.

The GIT space $\widehat \calP_2$ is not the same as $\overline \calP_2$, but they agree over the stable locus in $\widehat \calP_2$. We show $\overline \calP_2$ is a flip of $\widehat \calP_2$ along the semi-stable locus (see Thm. \ref{thmflip}).  The main point in comparing the GIT and KSBA compactifications  is a good understanding of 
the GIT boundary pairs and the results on linear systems on anticanonical pairs of Friedman \cite{friedmanlin} and Harbourne \cite{har1,har2} (see esp. Prop. \ref{classifydeg2}).

Our paper builds on the work on $K3$ surfaces of Shah \cite{shah,shahinsignificant}, Looijenga \cite{looijengavancouver,looijengacompact,looijengarational}, Friedman and Scattone \cite{friedmanannals,friedmanscattone,scattone}, and on  the work on compactifications of Koll\'ar \cite{kollar}, Shepherd-Barron \cite{sbnef,sbpolarization}, \cite{ksb}, Alexeev \cite{alexeevpairs0}, and Hacking \cite{hacking}. We also note that some discussion of degenerations of degree $2$ $K3$ surfaces from the perspective of the minimal model program was done recently by Thompson \cite{thompson} (see Thm. \ref{thmthompson}). The main difference to our paper is that \cite{thompson} never keeps track of the polarizing divisor $H$, and consequently it is not possible to fit the degenerations occurring in \cite{thompson} into a proper and separated moduli stack. We believe that one of the main contributions this paper provides for the general theory of moduli is to show concretely the importance of working with log general type: by considering polarizing divisors instead of polarizations, the boundary points are  naturally separated and fit into a moduli space. The example of \S\ref{sectexample} clearly illustrates this point in a simple case. Related to this example, we note that the moduli of weighted pointed curves considered by Hassett \cite{hassettweighted} is a $1$-dimensional analogue (esp. for genus $1$) of the moduli problem considered here. Finally, Hacking--Keel--Tevelev \cite{hkt} is another application of the KSBA approach to compactifying moduli spaces of special classes of surfaces (in loc. cit. del Pezzo).

We close with some remarks about the general degree $d$ case. First, a very similar analysis (involving GIT) can be carried out for other low degree cases. On the other hand, in general, the results of Section \ref{sectksba} establish the existence of a geometric compactification $\overline \calP_d$ for the moduli of degree $d$ $K3$ pairs. By Hodge theoretic considerations (see \cite{shahinsignificant}, \cite{kovacs}, and \cite{usui}),  we also expect that this compactification  maps to the Baily-Borel compactification (i.e. $\overline \calP_d\to (\calD/\Gamma_d)^*$). Then, the results of Section \ref{sectpolanti} give a procedure for identifying the essential components (i.e. the ``$0$-surfaces'') of the central fiber  in a degree $d$ degeneration. In principle, for a given degree $d$,  these techniques would allow one to identify the boundary components in $\overline \calP_d$.   However, as the degree increases, the number of cases in a classification of $0$-surfaces (analogue to Prop. \ref{classifydeg2}) and the number of gluing of these $0$-surfaces will grow very fast (roughly proportional to the number of partitions of $d$), making an explicit classification unfeasible for large $d$. Finally, we note that the GIT approach  (for small $d$) not only helps classify the boundary cases, but also gives a lot of structure to the fibration $\overline \calP_d\to (\calD/\Gamma_d)^*$.

We are also aware of some partial results and general approaches to the study of $\overline \calP_d$ of other researchers (e.g. \cite{ghk}). While we are considering only the degree two case here, our study is the first complete analysis of a geometric compactification for $K3$ pairs and one of the first in the KSBA framework for log general type surfaces (see also \cite{hkt}). We believe that our study is relevant to the general $\overline \calP_d$ case and  to the original compactification problem for $K3$ surfaces. 

\subsection*{Organization} In section \ref{sectreview}, we review the standard compactifications for moduli of degree $2$ $K3$s and discuss the space  $\widehat \calM$. This material is standard, but rather scattered  throughout the literature. Then, in section \ref{sectksba}, we introduce the KSBA compactification (based on \cite{sbpolarization}, \cite{ksb}, \cite{alexeevpairs0}, and \cite{hacking}) and establish the existence of a modular compactification $\overline \calP_d$. Next, in Section \ref{sectpolanti}, we review and adapt  some results on linear systems on anticanonical pairs of Friedman  and Harbourne.

The actual construction of $\overline{\calP}_2$ starts in  Section \ref{sectvgit}, where we introduce  the VGIT problem (generalizing \cite{shah} to $K3$ pairs) and discuss the space $\widehat \calP_2$. Then,  in Section \ref{sectslclimit},  we compare the GIT compactification $\widehat \calP_2$ with the KSBA compactification $\overline \calP_2$ for the moduli of degree $2$ $K3$ pairs.  Finally, in Sections \ref{secttype2} and \ref{secttype3}, we discuss in some detail the classification of the Type II and Type III degenerations respectively. Here, we also discuss the connection to the standard compactifications (GIT, Baily-Borel, or partial toroidal) of $\calF_2$.

\subsection*{Acnowledgement} The idea of considering a moduli of stable pairs as an alternative solution to the compactification problem for $K3$ surfaces is widely discussed among the experts in the field. We have benefited from long term discussions with V. Alexeev, R. Friedman, P. Hacking, B. Hassett, S. Keel, and E. Looijenga. We are also grateful to R. Friedman, P. Hacking and J. Koll\'ar for some specific comments on an earlier draft.

\section{Review of the standard compactifications of $\calF_2$}\label{sectreview}
In this section we review some facts about the moduli space $\calF_2$ of degree $2$ $K3$ surfaces and its compactifications. While all the results here are  well known (see esp. Shah \cite{shah},  Looijenga \cite{looijengavancouver}, Friedman \cite{friedmanannals}, and Scattone \cite{scattone}), the presentation is somewhat new and adapted to the subsequent needs of the paper.

\subsection{The Baily-Borel compactification}\label{sectbb} 
In general,  the  moduli space $\calF_d$ of $K3$ surfaces of degree $d$  is isomorphic to a locally symmetric variety $\calD/\Gamma_d$, where $\calD$ is a $19$-dimensional Type IV domain and $\Gamma_d$ is an arithmetic group acting on $\calD$. Namely, $\calD\cong \{\omega\in \bP(\Lambda_d\otimes_\bZ \bC) \mid \omega.\omega=0, \ \omega.\bar{\omega}>0\}_0$ and $\Gamma_d$ is a subgroup of finite index in $\calO(\Lambda_d)$, where $\Lambda_d\cong \langle -d\rangle\oplus E_8^{\oplus 2}\oplus U^{\oplus 2}$ is the primitive middle cohomology of a degree $d$ $K3$ surface. By the Baily-Borel theory, the space $\calD/\Gamma_d$ is a quasi-projective algebraic variety and admits a projective compactification $(\calD/\Gamma_d)^*$. For Type IV domains, the Baily-Borel compactification  $(\calD/\Gamma_d)^*$ is quite small: topologically, it is obtained by adding points ({\it Type III components}) and curves ({\it Type II components}), which are quotients of the upper half space $\fH$ by modular groups. 

The Baily-Borel compactifications for the moduli spaces of $K3$ surfaces were analyzed by Scattone \cite{scattone}. In particular, for the degree $2$, the following holds:
\begin{theorem}[Scattone]\label{thmbb}
The boundary of $\calF_2^*=(\calD/\Gamma_2)^*$ consists of four curves (the closures of the Type II components) meeting in a single point (the unique Type III component). Furthermore, each Type II component is isomorphic to $\fH/\SL(2,\bZ)$. 
\end{theorem}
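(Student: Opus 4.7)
The plan is to apply the general theory of the Baily--Borel compactification for Type IV domains. For the domain $\calD$ attached to the lattice $\Lambda_2$ of signature $(2,19)$, rational boundary components correspond bijectively to nonzero primitive isotropic sublattices $I\subset\Lambda_2$: rank-one sublattices give $0$-dimensional components (Type III), while rank-two sublattices give $1$-dimensional components each canonically isomorphic to the upper half plane $\fH$ (Type II). After quotienting by $\Gamma_2$, the boundary components of $\calF_2^*$ are indexed by $\Gamma_2$-orbits of such $I$, and each Type II component becomes $\fH/\Gamma_I$, where $\Gamma_I\subset \SL(I)\cong\SL(2,\bZ)$ is the image of $\Stab_{\Gamma_2}(I)$. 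The proof thus reduces to three tasks: (i) classify $\Gamma_2$-orbits of primitive isotropic lines; (ii) classify $\Gamma_2$-orbits of primitive isotropic planes; (iii) compute the stabilizer action on $\fH$.

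For (i), I would invoke Eichler's criterion, which applies because $\Lambda_2$ splits off $U^{\oplus 2}$: the $\Gamma_2$-orbit of a primitive isotropic vector is determined by its image in the discriminant group $\Lambda_2^*/\Lambda_2\cong\bZ/2\bZ$, and this image is zero for any primitive isotropic vector (one such vector can be chosen inside the $U^{\oplus 2}$-summand, which is trivial in the discriminant, and Eichler transvections then connect it to the rest). Hence there is a single $\Gamma_2$-orbit, yielding the unique Type III point.

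For (ii), each primitive isotropic plane $J$ determines a negative definite even lattice $K=J^\perp/J$ of rank $17$; by Nikulin's uniqueness theorem for lattice embeddings, the $\Gamma_2$-orbit of $J$ is determined by the isomorphism class of $K$ together with the identification of discriminants (which in our setting is automatic). The admissible $K$'s are those for which $\Lambda_2\cong U^{\oplus 2}\oplus K$, i.e., negative definite even lattices of rank $17$ whose discriminant form is $\bZ/2\bZ$ with value inherited from the $\langle -2\rangle$-summand of $\Lambda_2$. The heart of Scattone's analysis is a genus-theoretic enumeration: one lists the possible root sublattices $R$ of the appropriate rank, and for each $R$ one counts overlattices of $R$ in its dual that realize the prescribed discriminant form. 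This yields exactly four such lattices, distinguished by their root systems $A_{17}$, $D_{16}\oplus A_1$, $E_7\oplus D_{10}$, and $E_8^{\oplus 2}\oplus A_1$, and hence four Type II components.

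For (iii), the natural map $\Stab_{\Gamma_2}(J)\to \SL(J)\cong\SL(2,\bZ)$ is surjective: the parabolic generator $T$ lifts via an Eichler transvection supported on a hyperbolic plane in $\Lambda_2$ disjoint from $J$, and the involution $S$ lifts through the swap of the two generators of a $U$-summand adjusted to lie in $\Gamma_2$; since $T$ and $S$ generate $\SL(2,\bZ)$, each Type II component is $\fH/\SL(2,\bZ)$. The incidence statement is then immediate: every primitive isotropic plane contains primitive isotropic lines, and by (i) all such lines are $\Gamma_2$-equivalent, so the closures of the four Type II curves meet at the unique Type III point. The principal obstacle is the lattice enumeration in step (ii), which rests on Nikulin's existence and uniqueness theorems plus a careful case analysis of rank-$17$ root sublattices; steps (i) and (iii) are then formal consequences of Eichler's criterion and explicit lifting constructions.
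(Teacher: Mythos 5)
Your outline is essentially the same argument as the paper's, since the paper offers no proof of its own beyond citing Scattone (\S 6.2 and \S 5.7 of \cite{scattone}), and what you reconstruct is precisely the lattice-theoretic content of that citation: boundary components correspond to $\Gamma_2$-orbits of primitive isotropic sublattices of $\Lambda_2$, Eichler's criterion gives a single orbit of isotropic lines (hence one Type III point on which all four Type II closures meet), the four Type II components come from the genus of rank $17$, determinant $2$ even negative definite lattices $J^\perp/J$ with root systems $A_{17}$, $D_{16}+A_1$, $E_7+D_{10}$, $2E_8+A_1$ (note your rank count of $17$ is the correct one), and surjectivity of $\Stab_{\Gamma_2}(J)\to\SL(J)$ gives $\fH/\SL(2,\bZ)$. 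The only loose point is step (i): the assertion that every primitive isotropic vector has trivial image in the discriminant group should be proved by the short divisibility check (a primitive isotropic $v$ with $(v,\Lambda_2)\subset 2\bZ$ would be divisible by $2$, so the divisor is $1$), rather than by the parenthetical appeal to transvections, which presupposes the single-orbit conclusion; this is routine and does not affect correctness.
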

\begin{proof}
\cite[\S6.2]{scattone} and \cite[\S5.7, esp. Fig. 5.5.7]{scattone} for the second statement.
\end{proof}

\begin{remark}\label{labelbb}
The Type II components are in one-to-one correspondence with the rank $2$ isotropic sublattices $E$ of $\Lambda_d$ modulo $\Gamma_d$. Then, $E^\perp_{\Lambda_d}/E$  is a negative definite rank $18$ lattice and a basic arithmetic invariant of $E$ (and of the corresponding  Type II component).  The subroot lattice $R$ contained in $E^\perp_{\Lambda_d}/E$ is another (coarser) arithmetic invariant. In many cases (e.g. degree $2$), $R$ uniquely determines the isometry class of $E$. Consequently, it is customary to label the Type II components by the root lattice $R$. For degree $2$, the four Type II components correspond to the root lattices $2E_8+ A_1$, $E_7+D_{10}$, $D_{16}+ A_1$, and $A_{17}$ respectively (see Figure \ref{fig4}).
\end{remark}

\subsection{The GIT compactification}\label{sectshah}
For low degree $K3$ surfaces (e.g. $d\le 8$), an alternative (purely algebraic) construction for the moduli space $\calF_d$ can be done via GIT. Additionally, GIT produces a compactification with some weak geometric meaning. Here, we review the results of Shah \cite{shah} for degree $2$ $K3$ surfaces. The connection to the Baily-Borel compactification is discussed in \S\ref{sectlooijenga} below. 
 
 A generic $K3$ surface of degree $2$ is a double cover of $\bP^2$ branched along a plane sextic. Thus, a first approximation of the moduli space  $\calF_2$  of the degree $2$ $K3$ surfaces is the GIT quotient $\overline \calM:=\bP H^0(\bP^2,\calO_{\bP^2}(6))\gquot \SL(3)$ for plane sextics. This GIT quotient was described by Shah \cite[Thm. 2.4]{shah}.
\begin{theorem}[Shah]\label{thmshah1}
Let $\overline{\calM}$ be the GIT quotient of plane sextics.
\begin{itemize}
\item[(1)] A sextic with ADE singularities is GIT stable. Thus, there exists an  open subset $\calM\subset \overline \calM$, which is a coarse moduli space for sextics with ADE singularities (or equivalently non-unigonal degree $2$ $K3$ surfaces). 
\item[(2)] $\overline \calM\setminus \calM$ consists of $7$ strata (irreducible, locally closed, disjoint subsets):
\begin{itemize}
\item[](Type II)  $Z_1$, $Z_2$, $Z_3$, $Z_4$ of dimensions $2$, $1$, $2$, and $1$ respectively (with $Z_i$ corresponding to case $II(i)$ of \cite[Thm. 2.4]{shah});
\item[](Type III) $\tau$, and $\zeta$ of dimensions $1$ and $0$ (cf. III(1) and III(2) of \cite[Thm. 2.4]{shah});
\item[](Type IV) a point $\omega$ (cf. IV of \cite[Thm. 2.4]{shah}).  
\end{itemize}
\item[(3)] The following is a complete list of adjacencies among the boundary strata:
\begin{itemize}
\item[a)] $\zeta\in \overline{Z}_i$ for all $i\in\{1,\dots,4\}$;
\item[b)] $\overline{\tau}= \overline{Z}_1\cap \overline{Z}_3$;
\item[c)] $ \overline{\tau}=\tau\cup \{\zeta\}\cup\{\omega\}$. 
\end{itemize}
(see Figure \ref{fig3}).
\end{itemize}
\end{theorem}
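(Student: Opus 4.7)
The plan is to apply the Hilbert--Mumford numerical criterion to the $\SL(3)$-action on $V=\bP H^0(\bP^2,\calO_{\bP^2}(6))$. After conjugation, every one-parameter subgroup may be taken diagonal with weights $(a_1,a_2,a_3)$ satisfying $a_1\le a_2\le a_3$, $a_1+a_2+a_3=0$; a sextic $f=\sum c_{ijk}x^iy^jz^k$ is destabilized by such a $\lambda$ iff every monomial in $f$ has strictly positive weight $ia_1+ja_2+ka_3$, and is strictly semistable when all weights are $\ge 0$ with equality for at least one monomial. The whole analysis thus reduces to enumerating, on the Newton triangle of sextics, the maximal open half-planes of monomials that can support a non-stable $f$, and translating each into geometry. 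For part (1), each maximal destabilizing half-plane corresponds to a flag (point on a line) in $\bP^2$ together with a numerical type, and the weight inequalities force $f$ to have either multiplicity $\ge 4$ at the flag point, or a triple point whose tangent cone is a single line of multiplicity $\ge 3$, or a non-reduced structure along a conic or cubic (triple conic, double cubic plus line, etc.). Each such case yields a singularity outside the ADE list (equivalently, forces the branched double cover to acquire non-canonical singularities). Conversely, an ADE sextic admits a monomial of non-positive weight against every candidate $\lambda$, because ADE singularities have multiplicity $\le 3$ and tangent data rich enough to prevent all monomials from concentrating in a narrow cone.

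For part (2), I would identify the closed strictly semistable orbits with sextics whose support lies entirely on a single weight-zero line of the Newton triangle, so that $\lambda$ fixes $f$ up to scaling. Enumerating such configurations modulo the residual symmetry in the normalizer of the diagonal torus yields finitely many irreducible families; for each, the Levi factor of the centralizer of $\lambda$ acts, and the resulting GIT subquotient is a boundary stratum of $\overline\calM$. A case analysis matching Shah's normal forms in \cite[Thm.~2.4]{shah} produces exactly four Type II families $Z_1,\dots,Z_4$ of the claimed dimensions, two Type III loci $\tau,\zeta$, and a single Type IV point $\omega$; the Type labels record the Hodge-theoretic behavior of the double cover at the closed orbit, which Shah tabulates case by case.

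For part (3), the adjacencies follow from the partial order on candidate one-parameter subgroups: if the weight filtration of $\lambda'$ refines that of $\lambda$, the closed orbit for $\lambda'$ lies in the closure of the stratum for $\lambda$. Concretely I would specialize the normal form of each $Z_i$ by sending coefficients to zero and read off which stratum the limit lies in; the maximally degenerate loci $\omega$ and $\zeta$ appear in every deeper closure, giving $\zeta\in\overline Z_i$ for all $i$ and $\{\zeta,\omega\}\subset\overline\tau$, while $\overline Z_1\cap\overline Z_3$ is computed by intersecting the two supporting half-planes of $Z_1$ and $Z_3$ and turns out to be precisely $\overline\tau$. The main obstacle throughout is bookkeeping: the Hilbert--Mumford enumeration is finite but intricate on the sextic Newton triangle, and matching each destabilizing flag with its precise geometric content --- multiplicity, tangent-cone shape, multiple-component structure --- is where the real work lies. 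Once that dictionary is in place, the dimensions and the adjacency diagram both fall out mechanically from the same one-parameter subgroup data.
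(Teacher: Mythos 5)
You should first note that the paper itself offers no proof of Theorem \ref{thmshah1}: it is quoted directly from Shah \cite[Thm.~2.4]{shah}, so the only meaningful comparison is with Shah's original argument, which is indeed the Hilbert--Mumford analysis you outline. In spirit your plan matches it, but as written it has a genuine structural gap in part (2): you identify the boundary $\overline{\calM}\setminus\calM$ with the strictly semistable locus and propose to enumerate its strata as torus-fixed (weight-zero) configurations. However, $\calM$ is by definition the locus of sextics with ADE singularities, not the GIT-stable locus, and two of the seven boundary strata consist of \emph{stable} points: $Z_4$ (double cubics $f_3^2$) and $Z_3$ (double conic plus a transverse conic, $q_0^2q$) are stable with finite stabilizers, as recorded in the remark following the theorem. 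These orbits are not fixed by any one-parameter subgroup, so your enumeration of closed strictly semistable orbits can produce at most five of the seven strata and gives no mechanism for locating $Z_3$, $Z_4$, their dimensions, or their Type II labels. What is actually required (and what Shah does) is a classification of \emph{all} semistable sextics by their non-ADE singularities --- the stable-but-non-ADE curves as well as the minimal strictly semistable orbits --- with the II/III/IV labels coming from the degeneration type of the associated double covers.

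There is a second, more local gap in your part (1) dictionary: a triple point whose tangent cone is a single line counted with multiplicity three does not force a non-ADE singularity. The singularities $E_6$, $E_7$, $E_8$ ($x^3+y^4$, $x^3+xy^3$, $x^3+y^5$) all have tangent cone $\ell^3$, and sextics with such singularities are stable. The Hilbert--Mumford inequalities for the relevant one-parameter subgroups exclude more monomials than ``tangent cone $\ell^3$'' records: they force the distinguished line to have high contact with the curve at the point, so that the double cover acquires a singularity of type $\widetilde E_8$ (i.e.\ $J_{10}$) or worse. Without that finer contact condition your converse direction (``multiplicity $\le 3$ and tangent data rich enough'') cannot be closed for the $E_6$, $E_7$, $E_8$ cases. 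Finally, the adjacency claims in (3) that involve $Z_3$ and $Z_4$ (for instance $\zeta\in\overline{Z}_3$ and $\overline{\tau}=\overline{Z}_1\cap\overline{Z}_3$) cannot be read off from a partial order on one-parameter subgroups, since those strata admit no adapted $\lambda$; the explicit specialization of normal forms that you mention only as a fallback is what actually has to carry that step.
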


\begin{remark}
Each point of a boundary strata corresponds to a unique minimal orbit. The singularities of $\overline {\calM}$ along the boundary strata depend on the stabilizers of these minimal orbits. For our situation, we have the following:
\begin{itemize}
\item[i)] The points parameterized by $Z_3$ and $Z_4$ are stable points. In particular, $\overline{\calM}$ has finite quotient singularities along these strata.
\item[ii)] The stabilizers of closed orbits parameterized by $Z_1, Z_2$, and $\tau$ are, up to finite index, $\bC^*$. 
\item[iii)] The stabilizer of the closed orbit parameterized by $\zeta$ (equation $(x_0x_1x_2)^2$) is the standard diagonal $2$-torus.
\item[iv)] The stabilizer of the closed orbit parameterized by $\omega$ (equation $(x_0x_2-x_1^2)^3$) is $\SL(2)$. 
\end{itemize} 
In particular, note that $\overline \calM$ has 
toric singularities everywhere except the point $\omega$. 
\end{remark}

As noted above, the space $\calM$ is a moduli space of curves with ADE singularities. The boundary $\overline \calM\setminus \calM$ is not strictly speaking a GIT boundary, but a boundary of non-ADE singularities. Shah has noted that except for the curves corresponding to the point $\omega$ the singularities that occur are ``{\it cohomologically insignificant singularities}" (see \cite{shahinsignificant}). In modern language, the cohomologically insignificant singularities are {\it du Bois singularities} (compare  \cite{steenbrinkinsignificant}). In the situation considered here, two dimensional hypersurfaces, these singularities are the same as the semi-log-canonical (slc) singularities of Koll\'ar--Shepherd-Barron \cite{ksb} (see also \cite{kk} and \cite{kovacs} for a more general discussion). Rephrasing the analysis of Shah (esp. \cite[Thm. 3.2]{shah}) in modern language, we get the following key result:

\begin{proposition}\label{propslc}
Let $C$ be a plane sextic, and $X$ the double cover of $\bP^2$ branched along $C$ (not necessarily normal). Then, $X$ is slc iff $C$ is GIT semistable and the closure of the orbit of $C$ does not contain the orbit of the triple conic. 
\end{proposition}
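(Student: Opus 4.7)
The strategy is to reduce the slc condition on $X$ to a log pair condition on $(\bP^2, \tfrac{1}{2}C)$ and then to compare the resulting condition with Shah's explicit classification (Theorem \ref{thmshah1}) of GIT semistable sextics.

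First I would set up the cover-to-pair dictionary. For a double cover $\pi\colon X \to \bP^2$ branched along a (possibly non-reduced) divisor $C$, the canonical bundle formula gives $K_X = \pi^{*}(K_{\bP^2}+\tfrac{1}{2}C)$. The standard cyclic-cover comparison (extended to the non-normal slc setting using \cite{ksb}) then asserts that $X$ is slc if and only if the log pair $(\bP^2, \tfrac{1}{2}C)$ is log canonical, provided no component of $C$ appears with multiplicity $\ge 3$; conversely, if some component of $C$ has multiplicity $\ge 3$, the coefficient $m/2$ exceeds $1$, which forces the pair to be non-lc and the double cover to be non-slc.

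With this criterion in hand, slc on $X$ translates into a concrete condition on $C$: every component of $C$ has multiplicity at most $2$, and the pair $(\bP^2, \tfrac{1}{2}C)$ is log canonical at every point. I would then verify that this condition matches exactly Shah's list of GIT semistable sextics minus the triple conic orbit. The stable stratum is classical: an ADE singularity on $C$ produces a du Val singularity on $X$, which is canonical, hence slc. For the six non-stable strata $Z_1,\ldots,Z_4,\tau,\zeta$ of Theorem \ref{thmshah1}, I would work through the explicit normal forms of the closed orbits recorded in \cite[\S 2--3]{shah}: each produces on $X$ at worst a simple elliptic singularity, a degenerate cusp, or a normal-crossings configuration. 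For instance, the stratum $\zeta$ has closed orbit $(x_0x_1x_2)^2$, whose double cover is $z=\pm x_0 x_1 x_2$, namely two copies of $\bP^2$ glued transversally along three lines, a textbook slc surface. For the triple conic $\omega$ with equation $(x_0x_2-x_1^2)^3$, the multiplicity-$3$ component produces coefficient $3/2>1$ and $X$ is non-slc. Conversely, if $C$ is GIT unstable, the Hilbert--Mumford criterion produces a destabilizing $1$-parameter subgroup whose limit concentrates mass at a point or along a line of multiplicity sufficient to violate the log canonical threshold, so $X$ fails to be slc.

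The main obstacle is the careful slc verification in the non-reduced boundary strata, and in particular separating the allowed multiplicity-$2$ components from the forbidden triple conic. The argument rests on the sharp numerical threshold $m/2 \le 1$: multiplicity $2$ lies exactly on the boundary of the log canonical region and yields normal-crossings-type slc configurations, whereas multiplicity $3$ strictly violates the bound and creates a non-slc singularity on the double cover. This numerical dichotomy, together with Shah's explicit list, delivers the asserted equivalence.
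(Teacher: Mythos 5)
Your overall framework (the dictionary ``$X$ slc $\Leftrightarrow$ $(\bP^2,\frac{1}{2}C)$ log canonical'', the reduction to closed orbits by semicontinuity of the log canonical threshold, and the case check against Shah's normal forms) is the same route the paper takes; the unstable-implies-non-lc direction that you sketch via Hilbert--Mumford is exactly what the paper delegates to \cite{kimlee} and \cite[\S10]{hacking}. But there is a genuine gap in how you treat the exceptional locus. The proposition does not exclude merely ``the triple conic orbit''; it excludes every semistable sextic whose \emph{orbit closure} contains the triple conic. This is a strictly larger set: it contains reduced sextics of the form $V\bigl((x_0x_2+x_1^2)^3+f_6\bigr)$ with $f_6$ of negative weight for the $1$-PS $(1,0,-1)$. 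These curves have no component of multiplicity $\ge 3$, so your coefficient argument ($m/2>1$) says nothing about them, and they do not appear among the closed orbits you propose to check (their closed orbit \emph{is} the triple conic). Consequently your claimed matching ``slc $\Leftrightarrow$ semistable minus the triple conic orbit'' is not the statement being proved, and taken literally it is false: such a sextic is semistable and not projectively equivalent to the triple conic, yet its double cover is not slc.

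Closing this gap requires an argument that semicontinuity cannot supply: the lct can only jump \emph{down} at the special fiber, so the non-lc-ness of the triple conic does not propagate to curves degenerating to it. The paper handles this with an explicit local computation: in affine coordinates near $(1{:}0{:}0)$, after the substitution $y'=y+x^2$, such a curve becomes $(y')^3+\alpha x^7+\text{h.o.t.}$, an $E_{12}$ singularity in Arnold's classification \cite[\S16.2.$7_2$]{agzv1}; since it is quasi-homogeneous its log canonical threshold ($\tfrac13+\tfrac17=\tfrac{10}{21}<\tfrac12$) is unaffected by the higher-order terms, so $(\bP^2,\frac{1}{2}C)$ is not lc, and the case $\alpha=0$ follows by semicontinuity in the correct direction. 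Without this step (or some substitute showing directly that every semistable sextic whose orbit closure meets the triple conic orbit acquires a singularity strictly worse than simple elliptic, cusp, or degenerate cusp), your proof establishes neither the ``only if'' implication nor the precise ``if'' statement of the proposition.
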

\begin{proof}
Assume first that $X$ is slc. This is equivalent to $(\bP^2, \frac{1}{2} C)$ is a log canonical pair. Then, $C$ is GIT semistable by \cite{kimlee} and \cite[\S10]{hacking}.

Conversely, assume $C$ is GIT semistable and that its orbit closure does not contain the triple conic. By the semi-continuity of the log canonical threshold, we can assume without loss of generality that  the orbit of $C$ is closed. An inspection of the  list of Shah \cite[Thm. 2.1]{shah} shows that the non-ADE singularities of $C$ are either isolated singularities of type  $\widetilde E_r$ (for $r=7,8$) or $T_{2,q,r}$,  or non-isolated singularities that lead to normal crossings, pinch points, or degenerate cusp singularities  for  the double cover $X$. The conclusion follows (e.g. see \cite[Thm. 4.21]{ksb}).

Finally, the triple conic gives a surface $X$  which does not have slc singularities. It remains to see that the same is true for semistable curves $C$ that degenerate to the triple conic. Such a curve $C$ is of type $V((x_0x_2+x_1^2)^3+f_6(x_0,x_1,x_2))$, 
where $f_6$ is a  degree $6$ polynomial which has negative degree with respect to the weights $(1,0,-1)$. Passing to affine coordinates $x,y$ around $(1,0,0)$ and after the change of coordinates $y'=y+x^2$, we get  $C$ to be given by
$$(y')^3+f_6(1,x,y'-x^2)=(y')^3+\alpha x^7+\textrm{h.o.t.},$$
where the higher order terms are with respect to the weights $\frac{1}{3}$ and $\frac{1}{7}$ for $y'$ and $x$ respectively. If $\alpha\neq 0$,  $(y')^3+\alpha x^7$ defines a singularity of type $E_{12}$ in Arnold's classification (cf. \cite[\S16.2.$7_2$]{agzv1}). Since this is a quasi-homogeneous singularity,   the log canonical threshold does not depend on the higher order terms.  We get that $X$ is not log canonical. By semi-continuity, the same is true if $\alpha=0$. 
\end{proof}

\subsection{The blow-up of the point $\omega\in\overline{\calM}$}\label{sectblowup} 
By Mayer's Theorem, a degree $2$ linear system $|H|$ on a $K3$ surface $X$ is of one of the following types: 
\begin{itemize}
\item[(NU)] (Hyperelliptic case) $|H|$ is base-point-free, in which case $X$ is  a double cover of $\bP^2$ branched along a plane sextic $C$ with at worst ADE singularities.
\item[(U)] (Unigonal case) $|H|$ has a base-curve $R$, then $H=2E+R$, where $E$ is elliptic and $R$ smooth rational. The free part of $|H|$ (i.e. $2E$) maps $X$ to a plane conic, and gives an elliptic fibration on $X$. On the other hand, $|2H|$ is base-point-free and maps $X$ two-to-one to  $\Sigma_4^0\subset \bP^5$, where $\Sigma_4^0$ is the cone over the rational normal curve in $\bP^4$.  The map $X\to \Sigma_4^0$ is ramified at the vertex and in a degree $12$ curve $B$, which does not pass through the vertex.  The curve $B$ has at worst ADE singularities. 
\end{itemize}
As discussed above, all degree two $K3$ surfaces of Type (NU) correspond to stable points of $\overline \calM$. On the other hand, all the surfaces of Type (U) are mapped to the point $\omega\in \overline{\calM}$. The blow-up $\widehat \calM$ of $\omega$ will introduce all the unigonal surfaces and will give a compactification for $\calF_2$. More precisely, we restate the main result of Shah \cite{shah} as follows:

\begin{theorem}[Shah]\label{thmshah2}
The Kirwan blow-up $\widehat{\calM}$ of the point $\omega\in \overline{\calM}$ gives a projective compactification of the moduli space $\calF_2$ of degree two $K3$ surfaces.  The boundary strata of $\calF_2\subset \widehat{\calM}$ are strict transforms of the boundary strata of $\overline{\calM}$ (compare Theorem \ref{thmshah1} and see Figures 
\ref{fig3} and  \ref{fig4}). Furthermore, the boundary points of $\widehat{\calM}$ correspond (in the sense of GIT) to degenerations of $K3$ surfaces of degree $2$ that are double covers of $\bP^2$ or $\Sigma_4^0$ and have at worst slc singularities. 
\end{theorem}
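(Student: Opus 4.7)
The plan is to invoke Kirwan's partial desingularization procedure at the point $\omega$, identify the resulting exceptional divisor with the moduli of unigonal degenerations, and then verify the slc property on all boundary strata.

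First I would set up the blow-up. The point $\omega$ corresponds to the closed $\SL(3)$-orbit of the triple conic $C_0=(x_0x_2-x_1^2)^3$, whose reductive stabilizer is $\SL(2)$; by the remark following Theorem \ref{thmshah1}, the closed orbits parameterized by all other boundary strata have stabilizer at most $\bC^*$ up to finite index. Thus $\omega$ is the unique obstruction to a geometric quotient at the boundary, and a single Kirwan blow-up—of the $\SL(3)$-orbit of $C_0$ inside the semistable locus of $\bP H^0(\bP^2,\calO_{\bP^2}(6))$, followed by re-taking the GIT quotient—produces a projective variety $\widehat\calM$ together with a birational morphism $\widehat\calM\to\overline\calM$ that is an isomorphism outside $\omega$. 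In particular, $Z_1,\dots,Z_4,\tau,\zeta$ lift to $\widehat\calM$ as their strict transforms, accounting for the claimed boundary strata away from the new exceptional divisor $E_\omega$.

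Next I would identify $E_\omega$ with the moduli of unigonal $K3$s. By the Luna slice theorem, $E_\omega\cong \bP(N)\gquot\SL(2)$, where $N$ is the $\SL(2)$-equivariant normal slice to $\SL(3)\cdot C_0$ at $C_0$; an explicit weight computation on $H^0(\bP^2,\calO_{\bP^2}(6))$ pins down $N$ as an $\SL(2)$-module. The key geometric step is to match each semistable direction, represented by a first-order deformation $(x_0x_2-x_1^2)^3+tf_6+O(t^2)$, to a double cover of the cone $\Sigma_4^0\subset\bP^5$ branched at the vertex and a degree-$12$ curve $B$, i.e.\ a unigonal degree $2$ $K3$ in the sense of Mayer. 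Conversely, any unigonal $(X,H)$ admits a one-parameter degeneration under which $|H|$ collapses its conic image and the branch curve limits to the triple conic, exhibiting $X$ as a point of $E_\omega$. Combined with $\calM\subset\widehat\calM$ (the non-unigonal locus), this shows $\widehat\calM$ compactifies the full $\calF_2$.

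For the slc characterization of boundary points, the non-unigonal strata $Z_1,\dots,Z_4,\tau,\zeta$ are handled by Proposition \ref{propslc}: being GIT semistable with orbit closures avoiding $C_0$, they yield slc double covers of $\bP^2$. For the points of $E_\omega$ one carries out a parallel semistability analysis for the auxiliary pair $(\Sigma_4^0, B)$, showing that a semistable $B$ has only $\widetilde{E}_r$, $T_{2,q,r}$, or non-isolated singularities producing normal crossings, pinch points, or degenerate cusps on the double cover—precisely the slc classes of \cite[Thm.~4.21]{ksb}. The main obstacle is the Luna slice computation at $C_0$: realizing $N$ explicitly as an $\SL(2)$-representation, enumerating the semistable orbits in $\bP(N)$, and matching each with a concrete family of unigonal $K3$ degenerations via the rescaling $\bP^2\dashrightarrow \Sigma_4^0$. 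This is the technical core of Shah's argument and requires detailed weight tables for deformations of the triple conic together with the recognition of the degeneration mechanism relating the branch sextic on $\bP^2$ with the branch degree-$12$ curve on $\Sigma_4^0$.
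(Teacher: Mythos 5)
The paper does not reprove this statement---it is Shah's theorem, restated---but your outline reproduces essentially the same argument, whose ingredients the paper supplies elsewhere: the Luna-slice description of $\overline{\calM}$ near $\omega$ (the lemma of \S\ref{sectvgit}, with slice $\Sym^{12}V\times\Sym^{8}V$ and exceptional divisor $\left(\bP\Sym^{12}V\times\bP\Sym^{8}V\right)\gquot_{\calO(3,2)}\SL(2)$), the identification of the exceptional directions with unigonal double covers of $\Sigma_4^0$, and the slc analysis of Proposition \ref{propslc} together with Theorem \ref{thmunigonal}. Two small corrections: $\omega$ is not ``the unique obstruction to a geometric quotient at the boundary'' (the strata $Z_1$, $Z_2$, $\tau$, $\zeta$ remain strictly semistable, with positive-dimensional stabilizers---$\zeta$ even has a $2$-torus---so orbits are still collapsed there in $\widehat{\calM}$; what singles out $\omega$ is that its stabilizer $\SL(2)$ is the only one that is not almost abelian, and the blow-up's modular role is to introduce the unigonal surfaces), and the blow-up is weighted, so the exceptional divisor is the $\calO(3,2)$-linearized quotient above rather than a plain $\bP(N)\gquot\SL(2)$.
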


\begin{remark}
 $\widehat \calM$ is the blow-up of the most singular point of $\overline{\calM}$ in the sense that $\omega\in \overline\calM$ is the only point with not almost abelian stabilizer. It follows that $\widehat{\calM}$ has only toric singularities. Kirwan--Lee \cite{kirwanlee} have constructed a full partial desingularization of $\overline{\calM}$ (i.e.  blown-up $\widehat \calM$ along the strata with toric stabilizers). 
 While this full desingularization is essential for cohomological computations on the moduli space, these extra blow-ups do not seem relevant here. 
\end{remark}

\begin{remark} We note that the locus of unigonal $K3$ surfaces gives a divisor in $\calF_2$. In fact, at the level of period domains $\calD/\Gamma_2$, the unigonal $K3$ surfaces correspond to an irreducible Heegner divisor $\calH_\infty/\Gamma_2$, where $\calH_{\infty}$ is the hyperplane arrangement associated to the rank $2$ lattice $\left(\begin{matrix} 0&1\\1&-2\end{matrix}\right)$. Theorem \ref{thmshah1} (combined with Mayer's result) gives the  isomorphism $\calM\cong (\calD\setminus\calH_{\infty})/\Gamma_2$. 
Then, Theorem \ref{thmshah2} identifies the unigonal divisor with (an open subset of) the exceptional divisor of  $\widehat \calM \to \overline \calM$.
\end{remark}

\begin{figure}[htb!]
\includegraphics[scale=0.57]{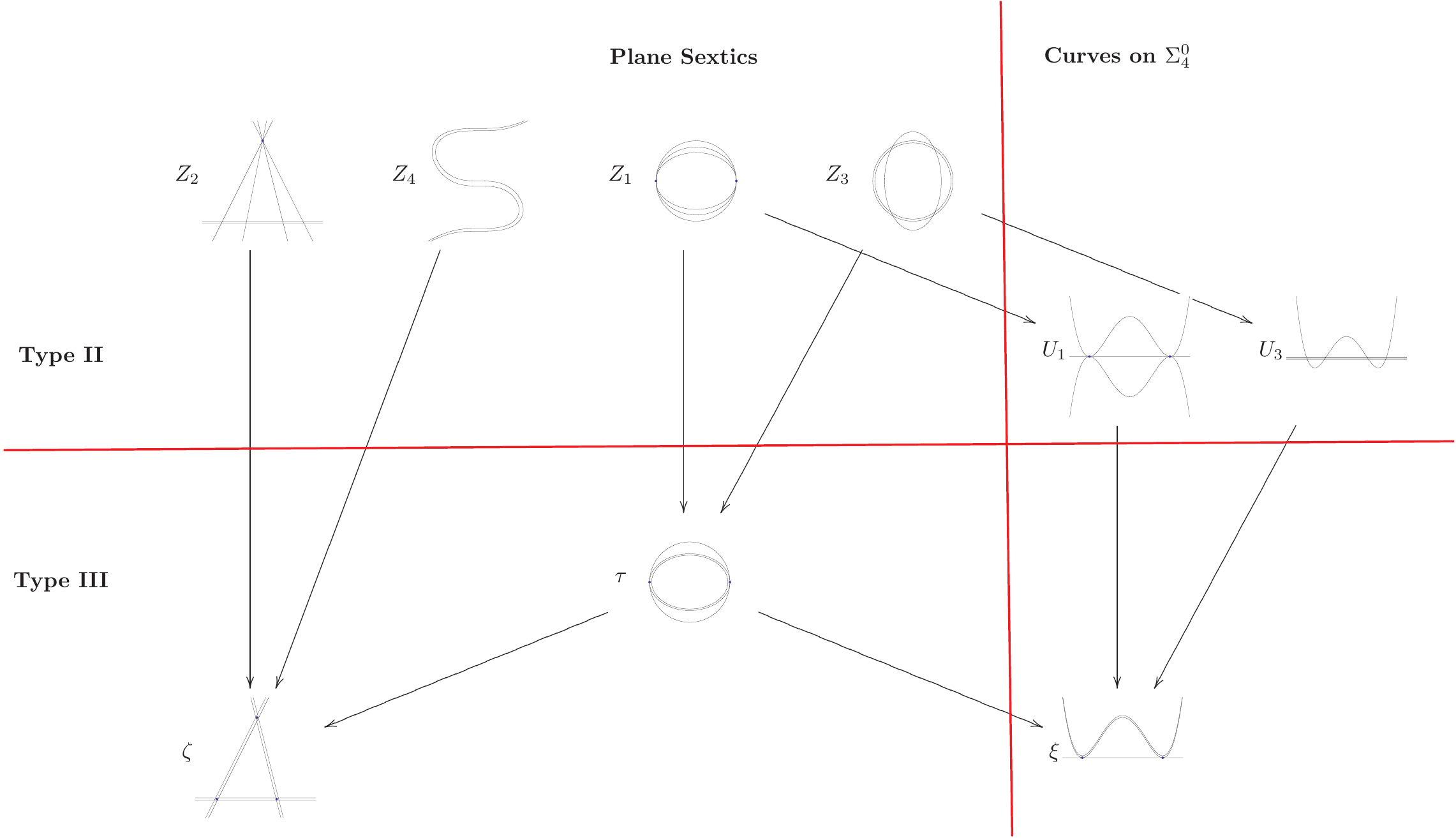} 
\caption{The minimal orbits parameterized by $\partial \widehat \calM$}\label{fig3}
\end{figure}

As stated above, the boundary components of $\calF_2\subset \widehat\calM$ are the strict transform $\widehat Z_i$ of the strata $\overline{Z}_i\subset \overline\calM$  (i.e. closures of $Z_i$). Clearly, $\overline{Z}_2$ and $\overline{Z}_4$ are unaffected by the blow-up of $\omega$. On the other hand, $\widehat Z_i\to \overline{Z}_i$ for $i=1,3$ are blow-ups of the point $\omega$ on the surfaces $\overline{Z}_i$. This introduces the exceptional divisors $\widehat U_i\subset \widehat{Z}_i$ (with open stratum $U_i$). The two exceptional divisors intersect the strict transform $\widehat \tau$ of $\overline\tau$ in a point $\xi$.  We have the following correspondence with the strata of Shah (see also Thm. \ref{thmunigonal}):
\begin{itemize}
\item[i)] $U_1$ corresponds to \cite[Thm. 4.3 Case 1(ii)]{shah}, the minimal orbits parameterize $3$ rational normal curves of degree $4$ (hyperplane sections of $\Sigma_4^0$) tangent in 2 points, giving two $\Ee$ singularities;
\item[ii)] $U_3$ corresponds to \cite[Thm. 4.3 Case 2(i)]{shah}, the minimal orbits parameterize 2 rational normal curves of degree $4$ meeting transversely, one of them counted with multiplicity $2$. This case is in fact stable. 
\item[iii)] $\xi$ corresponds to \cite[Thm. 4.3 Case 2(ii)]{shah}, the minimal orbit parameterizes 2 rational normal curves tangent in $2$ points, and one of them counted with multiplicity $2$. 
\end{itemize}
The geometry of the minimal orbits corresponding to the boundary of $\widehat \calM$ is schematically summarized in  Figure \ref{fig3} (taken from \cite{looijengavancouver}).

\subsection{Comparison of the GIT and Baily-Borel compactifications}\label{sectlooijenga} As discussed above, there are two natural compactifications for the moduli space of degree $2$ $K3$ surfaces:  $\calF_2\subset \widehat \calM$ (the Shah/Kirwan GIT construction) and $\calF_2\subset (\calD/\Gamma_2)^*$ (the Baily-Borel compactification). Since the singularities of the surfaces corresponding to the boundary of $\widehat \calM$ are slc (or ``insignificant cohomological singularities''), Shah \cite{shahinsignificant,shah} noted that  there is a well-defined extended period map $\widehat \calM\to (\calD/\Gamma_2)^*$. 
A little later, Looijenga  \cite{looijengavancouver, looijengacompact} gave a precise relationship between the two compactifications as summarized below.

\begin{figure}[htb!]
\includegraphics[scale=0.57]{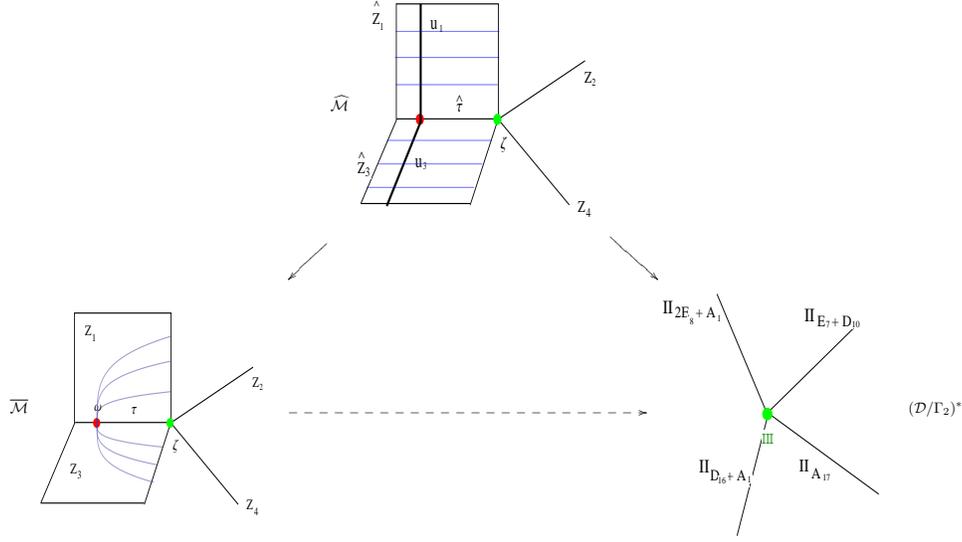} 
\caption{The boundary strata of $\widehat{\calM}$}\label{fig4}
\end{figure}

\begin{theorem}[Looijenga] \label{thmlooijenga}
The open embeddings $\calF_2\subset \widehat{\calM}$ and $\calF_2\subset (\calD/\Gamma_2)^*$ extend to a diagram (with regular maps):
$$
\xymatrix{
&\widehat{\calM}\ar@{->}[ld]\ar@{->}[rd]&\\ 
\overline{\calM}\ar@{-->}[rr]&&(\calD/\Gamma_2)^*&
}
$$
such that
\begin{itemize}
\item[i)] $\widehat \calM\to \overline \calM$ is the partial Kirwan blow-up of $\omega\in \overline\calM$;
\item[ii)] $\widehat \calM\to (\calD/\Gamma_2)^*$ is the Looijenga modification of the Baily-Borel compactification associated to the hyperplane arrangement $\calH_\infty$ (see \cite{looijengacompact}); more intrinsically, it is a small modification of $(\calD/\Gamma_2)^*$ such that the closure of Heegner divisor $\calH_\infty/\Gamma_2$ becomes $\bQ$-Cartier. 
\item[iii)] The exceptional divisor of $\widehat \calM\to \overline \calM$ maps to the unigonal divisor. 
\item[iv)] The boundary components are mapped as in Figure \ref{fig4}. 
\end{itemize}
\end{theorem}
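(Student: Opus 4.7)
The plan is to first build a regular extension of the period map and then identify it with Looijenga's arrangement-type modification, using the slc characterization from Proposition~\ref{propslc} as the main input.

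\textbf{Step 1 (Extending the period map).} On the open stratum $\calF_2 \subset \widehat{\calM}$ the Torelli theorem gives the standard isomorphism $\calF_2 \cong \calD/\Gamma_2$. To extend over the boundary of $\widehat{\calM}$ I would use Proposition~\ref{propslc}: every closed orbit parameterized by $\partial\widehat{\calM}$ corresponds to a Gorenstein surface with slc (equivalently, du~Bois) singularities. Together with Borel's extension theorem and Shah's theory of insignificant cohomological singularities (\cite{shahinsignificant}, cast in the Steenbrink/du~Bois framework), this yields a regular map $\widehat{\calM}\to (\calD/\Gamma_2)^*$ that restricts to the period map on $\calF_2$. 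Composition with the blow-down gives the rational map $\overline{\calM}\dashrightarrow (\calD/\Gamma_2)^*$, whose indeterminacy is concentrated at $\omega$. This settles the existence of the diagram and part~(i) (which is just the construction of $\widehat{\calM}$ as a Kirwan blow-up, Theorem~\ref{thmshah2}).

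\textbf{Step 2 (Locating the unigonal divisor).} Next I would prove (iii) by combining Shah's description of the exceptional divisor of $\widehat{\calM}\to\overline{\calM}$ (whose generic point parameterizes a unigonal surface, i.e. a double cover of $\Sigma_4^0$) with the lattice-theoretic fact that unigonal $K3$ surfaces sweep out exactly the Heegner divisor $\calH_\infty/\Gamma_2 \subset \calD/\Gamma_2$ (the hyperplane arrangement associated to the rank~$2$ lattice in the statement). Because the period map is already regular by Step~1, continuity forces the generic point of the exceptional divisor to map into $\calH_\infty/\Gamma_2$; then dimension reasons and irreducibility imply the exceptional divisor surjects onto its closure.

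\textbf{Step 3 (Identification with the Looijenga modification).} This is the heart of the argument. By Looijenga's construction \cite{looijengacompact}, the modification $(\calD/\Gamma_2)^{*,\calH_\infty}$ is characterized as the minimal small modification of $(\calD/\Gamma_2)^*$ that makes the closure of $\calH_\infty/\Gamma_2$ a $\bQ$-Cartier divisor. I would verify this universal property for $\widehat{\calM}$ in two steps:
(a) Show that $\widehat{\calM}\to (\calD/\Gamma_2)^*$ is a small modification, i.e. an isomorphism outside a codimension-$2$ locus. Away from $\omega$ the period map is already an isomorphism by Torelli; only the exceptional unigonal divisor of $\widehat{\calM}$ is new, and it must map onto the closure of $\calH_\infty/\Gamma_2$ by Step~2, which is a divisor (so neither source nor target contains a collapsed divisor — this is the content of smallness).
(b) Verify the $\bQ$-Cartier property of the exceptional divisor on $\widehat{\calM}$ from its explicit toric description as a Kirwan blow-up, and compare the local fan at the preimage of the Type~III point with the arrangement data of $\calH_\infty$ over the deepest Baily--Borel cusp. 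Matching these fans identifies $\widehat{\calM}$ with $(\calD/\Gamma_2)^{*,\calH_\infty}$ and proves (ii).

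\textbf{Step 4 (Boundary correspondence).} Finally, to establish (iv) and Figure~\ref{fig4}, I would compute for each Shah stratum $Z_i$, $\tau$, $\zeta$ (and the new strata $U_1, U_3, \xi$ in the exceptional divisor) the limit mixed Hodge structure of a generic one-parameter smoothing, extracting the monodromy invariant lattice and comparing with the root-lattice labels $2E_8+A_1$, $E_7+D_{10}$, $D_{16}+A_1$, $A_{17}$ of Scattone \cite{scattone} (Remark~\ref{labelbb}). Each Type~II stratum of $\widehat\calM$ will land on the corresponding Type~II curve in $(\calD/\Gamma_2)^*$, while the Type~III strata $\zeta$, $\omega$ all collapse to the unique Type~III point.

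\textbf{Main obstacle.} The delicate part is Step~3(b): proving that the Kirwan blow-up fan at $\omega$ matches the arrangement fan associated to $\calH_\infty$ on the nose, rather than giving just some small modification through which the period map factors. This requires a careful local analysis of the $\SL(2)$-stabilizer at the triple conic on the GIT side against the intersection of $\calH_\infty$ with the link of the Type~III cusp on the arithmetic side — essentially the content of Looijenga's comparison \cite{looijengavancouver}. Once this local model is matched, the global identification follows from the uniqueness of the minimal $\bQ$-factorialization.
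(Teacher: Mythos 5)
The paper does not actually prove this statement: it is a survey theorem quoted from Looijenga \cite{looijengavancouver,looijengacompact}, and the only ``proof-level'' discussion in the text is Remark \ref{remjinv}, which records the division of labor in the literature (Shah's results give a \emph{set-theoretic} extension of the period map, Friedman matches the strata, and Looijenga supplies the regularity of $\widehat\calM\to(\calD/\Gamma_2)^*$ together with its identification as the arrangement/semi-toric modification). So there is no internal argument to compare yours against; the question is whether your sketch faithfully reconstructs the cited proof, and here two points need attention.

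First, Step 1 overstates what Proposition \ref{propslc} and the du Bois/insignificance theory give you: as Remark \ref{remjinv} emphasizes, this input only produces a set-theoretically well-defined extension. Borel's extension theorem applies to a map off a normal crossings divisor in a \emph{smooth} source, while $\widehat\calM$ is singular, so you must pass to a resolution, apply Borel there, and then descend; descent needs the map to be constant on the fibers of the resolution, and checking that uses exactly the limit-MHS/stratum computations you postpone to Step 4. As ordered, Step 1 is therefore circular (or at least incomplete); it can be repaired, but the regularity is genuinely part of Looijenga's contribution rather than a formal consequence of slc singularities. Second, you correctly identify Step 3(b) as the heart, but at that point your argument is ``match the Kirwan local model at $\omega$ with the arrangement fan over the cusp --- essentially the content of Looijenga's comparison,'' i.e.\ you end up citing the same source the paper cites rather than supplying the comparison; that is acceptable for a quoted theorem but means the proposal does not go beyond the paper's own treatment. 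A smaller slip in Step 3(a): smallness is not about the unigonal (exceptional) divisor, which is \emph{not} contracted but maps onto the closure of $\calH_\infty/\Gamma_2$; it is the statement that the non-isomorphism locus is the preimage of the one-dimensional Baily--Borel boundary, namely the at most two-dimensional strata $\widehat Z_i$, $\widehat\tau$, $\zeta$, which have codimension at least $17$ in the $19$-dimensional $\widehat\calM$ (and note that ``away from $\omega$ the period map is an isomorphism by Torelli'' holds only on $\calF_2$, not on the other GIT boundary strata). With these caveats, your Steps 2 and 4 (unigonal divisor to Heegner divisor; limit MHS versus Scattone's root-lattice labels) do match the route taken in \cite{friedmanannals} and \cite{looijengavancouver}.
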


\begin{remark}\label{remjinv}
Shah's results (\cite{shah}) give a set-theoretic extension of the period map from $\widehat \calM$ to $(\calD/\Gamma_2)^*$ without matching the strata. Scattone \cite{scattone} has computed the Baily-Borel boundary strata. The first matching of the strata (without any extension claim for the period map) is Friedman \cite[Rem. 5.6]{friedmanannals}. Finally, Looijenga's results (\cite{looijengavancouver,looijengacompact}) give that the map $\widehat \calM\to (\calD/\Gamma_2)^*$ is analytic (and thus algebraic). Additionally, it follows that:
\begin{itemize}
\item[i)] For $i=2,4$: $Z_i\cong \mathfrak H/\SL(2,\bZ)\cong \bA^1$, the map being given by the $j$-invariant associated to the minimal orbits $x_0^2f_4(x_1,x_2)$ for $Z_2$ and $f_3(x_0,x_1,x_2)^2$ for $Z_4$. The $Z_4$ case is stable, thus the orbits are in one-to-one correspondence with the points of $Z_4$, but for $Z_2$ many orbits degenerate to the same minimal orbit. Even in this case, the $j$-invariant is well defined. Namely, $Z_2$ parameterizes two different cases:  a sextic containing a double line meeting the residual quartic in $4$ distinct points, or a curve with $\widetilde{E}_7$ singularities; there is an obvious $j$-invariant in both cases.
\item[ii)] For $i=1,3$, there are rational maps $\overline{Z}_i\to \bP^1$, which are given by $j$-invariants; they are undefined at $\omega$. After the blow-up of $\omega$, we get regular maps $\widehat Z_i\to \bP^1$ (essentially $\bP^1$-fibrations).  The fibers correspond to configurations of conics such that the $j$-invariant is unchanged. For example, for $Z_3$, fix the double conic and $4$ points on it (this fixes the $j$-invariant), then the fiber of $\widehat{Z}_3\to \bP^1$  is the pencil of conics passing through these $4$ points. 
\end{itemize}
\end{remark}

\section{The KSBA compactification for log $K3$ surfaces}\label{sectksba}
As discussed above, the Shah--Looijenga compactification $\widehat \calM$ for $\calF_2$ has several good properties including  that the boundary points correspond to Gorenstein surface with slc singularities (higher dimensional analogues of the nodal curves). However, from a moduli 
point of view, a serious  problem with $\widehat \calM$ is that it is not separated at the boundary. In this section, we address this issue by applying  the general approach of Koll\'ar--Shepherd-Barron \cite{ksb} and Alexeev \cite{alexeevpairs0}  ({\it KSBA})   to compactifying moduli spaces of varieties of (log) general type.

In order to apply the KSBA compactifying approach, we need to change the moduli problem from $K3$ surfaces to varieties of log general type. A natural solution (e.g. \cite[\S5.1]{alexeevpairs0}) is to consider instead of $\calF_d$ {\it the moduli stack $\calP_d$} of pairs $(X,H)$ consisting of $K3$ surfaces together with an ample divisor $H$ of degree $d$; we call such pairs {\it degree $d$ $K3$ pairs}. The two moduli functors are related by the  natural forgetful map 
\begin{eqnarray*}
\calP_d&\to& \calF_d\\
(X,H)&\to& (X,\calO_X(H)),
\end{eqnarray*} 
which realizes $\calP_d$ as  a $\bP^g$-fibration (with $d=2g-2$) over $\calF_d$. 
\begin{proposition}
With notation as above, both $\calF_d$ and $\calP_d$ are smooth Deligne--Mumford stacks. Furthermore, the forgetful map  $\calP_d\to \calF_d$ is smooth and proper with fibers isomorphic to $\bP^g$. 
\end{proposition}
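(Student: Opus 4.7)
The strategy is to realize $\calP_d$ as the projectivization of a naturally defined vector bundle on $\calF_d$, so that all the properties of $\calP_d\to \calF_d$ are formal consequences of being a $\bP^g$-bundle.

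\textbf{Step 1 (recall on $\calF_d$).} That $\calF_d$ is a smooth Deligne--Mumford stack of dimension $19$ is classical and can be deduced from the following standard facts: the deformation theory of $(X,L)$ is governed by $H^1(X,T_X)$ modulo the obstruction to preserving $L$, which lives in $H^2(X,\scrO_X)$; the polarization cuts out a smooth hypersurface in the unobstructed deformation space of $X$, and the automorphism group of a polarized $K3$ is finite (so $\calF_d$ is DM).

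\textbf{Step 2 (the vector bundle on $\calF_d$).} Let $\pi\colon \calX\to\calF_d$ be the universal family with universal polarization $\calL$. For every geometric point $(X,L)$ of $\calF_d$, ampleness of $L$, Kodaira vanishing, and Serre duality (using $\omega_X\cong\scrO_X$) give $H^i(X,L)=0$ for $i>0$. Riemann--Roch on the $K3$ surface yields
\[
h^0(X,L)=\chi(L)=2+\tfrac{L^2}{2}=2+\tfrac{d}{2}=g+1,
\]
where $d=2g-2$. Cohomology and base change then shows that $\calE:=\pi_*\calL$ is a locally free sheaf of rank $g+1$ on $\calF_d$, whose formation commutes with arbitrary base change.

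\textbf{Step 3 (identification $\calP_d\cong\bP(\calE)$).} An object of $\calP_d$ over a test scheme $T$ is a family of degree $d$ polarized $K3$ surfaces $(f\colon\calX_T\to T,\calL_T)$ together with an effective relative Cartier divisor $\calH_T\subset\calX_T$ such that $\scrO_{\calX_T}(\calH_T)\cong \calL_T$. Such a divisor is the same as a line $L_T\subset f_*\calL_T$ cutting out $\calH_T$, i.e.\ a section of $\bP(f_*\calL_T)\to T$. Pulling back the universal family from $\calF_d$ identifies this datum with a map $T\to \bP(\calE)$; this gives an equivalence of fibered categories $\calP_d\simeq\bP(\calE)$ over $\calF_d$.

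\textbf{Step 4 (conclusion).} The projectivization of a locally free sheaf of rank $g+1$ on a smooth DM stack is a smooth DM stack, and the structure morphism $\bP(\calE)\to\calF_d$ is smooth and proper with fibers isomorphic to $\bP^g$. This yields both the claimed structure of $\calP_d$ and of the forgetful map $\calP_d\to\calF_d$.

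The only nonformal input is the cohomology and base change in Step 2; everything else is direct. Hence the main (very mild) obstacle is verifying that $R^i\pi_*\calL=0$ for $i>0$ and that $\pi_*\calL$ is locally free of the predicted rank, which is precisely what the pointwise vanishing plus Grauert's theorem deliver.
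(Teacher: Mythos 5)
Your argument is correct, but it proves the proposition by a different mechanism than the paper. You identify $\calP_d$ globally with the projectivization $\bP(\calE)$ of $\calE=\pi_*\calL$, so that smoothness, properness, and the $\bP^g$-fibers of $\calP_d\to\calF_d$ all follow formally from cohomology and base change; the paper instead argues infinitesimally, using the same vanishing $H^1(X,L)=0$ but via the statement that every section of $L$ extends to any first-order deformation of $(X,L)$ (Sernesi), which gives smoothness of the forgetful map directly on deformation spaces, with finiteness of automorphisms supplying the DM property. The cohomological input ($H^i(X,L)=0$ for $i>0$, $h^0=g+1$) is identical in both. One caveat for your route: on the stack $\calF_d$ the polarization is only canonically defined up to twist by line bundles pulled back from the base (equivalently, as a section of the relative Picard functor), so the universal $\calL$, and hence $\pi_*\calL$, may exist only \'etale-locally or as a twisted sheaf; this is harmless because $\bP(\pi_*\calL)$ is insensitive to such twists, but it should be said if you want the literal bundle statement. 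What each approach buys: yours gives the global $\bP^g$-bundle structure (slightly stronger than the fibration statement) in one stroke; the paper's deformation-theoretic formulation is the one that survives at the boundary, where for slc degenerations there is no ambient projective bundle but the same vanishing $H^1(X,L)=0$ still controls the local structure of $\overline{\calP}_d$ (cf. Remark \ref{remdef}).
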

\begin{proof}
The smoothness of the moduli functor $\calF_d$ is well known. For a big and nef divisor $H$ on a $K3$ surface, $h^i(\calO_X(H))=0$ for $i>0$, and then
$$h^0(\calO_X(H))=2+\frac{H^2}{2}=p_a(H)+1=g+1.$$ 
The smoothness of forgetful map $\calP_d\to\calF_d$ follows from the fact that $H^1(\calO_X(H))=0$ which gives that every section of $L:=\calO_X(H)$ extends to a first-order deformation $(\sX,\calL)$ of $(X,L)$ 
 (see \cite[Prop. 3.3.14]{sernesi}; see also \cite[\S5]{beauvillek3}). Finally, since the automorphism group (as a polarized variety) of a polarized $K3$ surface is finite, it follows that $\calF_d$ and $\calP_d$ are Deligne-Mumford stacks.
\end{proof}

\begin{remark}\label{remdef}
We note that $H^1(X,L)=0$ for all degenerations of $K3$ surfaces considered in this paper (see Def. \ref{defstable} below). Thus, the forgetful map $\mathrm{Def}(X,H)\to \mathrm{Def}(X,L)$ is always smooth in our situation (here $H$ is an ample Cartier divisor and $L=\calO_X(H)$ is the associated invertible sheaf). Specifically, Kodaira vanishing ($H^i(X,L^{-1})=0$ for $L$ ample and $i=0,1$) holds if  $X$ is a demi-normal  (i.e. $X$ satisfies $S_2$ and is normal crossing in codimension $1$) projective surface (see \cite[Thm. 3.1]{arapura}, and also \cite{kovacs}). By definition, an slc variety is demi-normal. In our situation, we are also assuming $X$ is Gorenstein with $\omega_X\cong \calO_X$. Thus, by duality, we get $H^i(X,L)=0$ for $i=1,2$. By flatness, we also get $h^0(X,L)=g+1$.
\end{remark}

 Since the pairs $(X,H)$ are of log general type,  the KSBA theory gives a natural compactification for $\calP_d$ by allowing degenerations that satisfy a condition on the singularities of the pair (i.e. slc singularities) and a ``stability condition'' (i.e. ampleness for the polarization). More precisely, one has some flexibility in the definition of the moduli points by allowing a coefficient for the polarizing divisor $H$ (see  \cite{hassettweighted} for a similar situation in dimension $1$); only some choices for the coefficients give compactifications for $\calP_d$ (see Remark \ref{remcoef}). In our situation, we want the KSBA  compactification of $\calP_d$ to  be closely related to some compactifications of $\calF_d$. Thus, we would like that the choice of a divisor in a linear system to be mostly irrelevant. This is achieved by working with the moduli of pairs with $0<\epsilon\ll 1$ coefficients as in  Hacking \cite{hacking}. By adapting the general KSBA framework to our situation (see Remark \ref{remksbadef}), we define the limit objects in a compactified moduli stack $\overline{\calP}_d$ to be {\it stable pairs} as follows:
  \begin{definition}\label{defstable}
Let $X$ be a surface, $H$ an effective divisor on $X$ and $d=2g-2$ an even positive integer. We say that the pair $(X,H)$ is {\bf a stable $K3$ pair of degree $d$} if the following conditions are satisfied:
\begin{itemize}
\item[(0)] $X$ is Gorenstein  with $\omega_X\cong \calO_X$.
\item[(1)] The pair $(X,\epsilon H)$ is semi log canonical for all small $\epsilon >0$.
\item[(2)] $H$ is an ample Cartier divisor.
\item[(3)] There exists a flat deformation $(\sX,\calH)/T$ of $(X,H)$ over the germ of a smooth curve such that the general fiber $(X_t,H_t)$  is a degree $d$ $K3$ pair. Additionally, it is assumed that  $\calH$ is a relative effective Cartier divisor. 
\end{itemize}
\end{definition}

\begin{remark} Clearly, $(X,\epsilon H)$ is slc implies that $X$ is slc. 
Conversely, if $X$ is slc,  $(X,\epsilon H)$ is slc for small $\epsilon$ is equivalent to saying that $H$ does not pass through a log canonical center. In our situation ($X$ slc and Gorenstein),  this means that $H$ does not contain a component of the double locus of $X$ and does not pass through  a simple elliptic or cusp (possibly degenerate) singularity (see \cite[Thm. 4.21]{ksb}).  By working with $\epsilon$ coefficients, the singularities of the divisor $H$ are irrelevant. 
\end{remark}

\begin{remark}\label{remksbadef} The previous definition is standard  in a log general type situation with the exception of the  requirements that $X$ is Gorenstein and $H$ Cartier. In fact, the standard requirement in the KSBA approach is that $K_{X}+\epsilon H$ is 
$\bQ$-Cartier (e.g. \cite[Def. 6.1]{hackingsur}). If this condition holds for $\epsilon$    in an interval, then both $K_{X}$ and $H$ have to be $\bQ$-Cartier (and thus $X$ is $\bQ$-Gorenstein).  Moreover, for degenerations of $K3$ surfaces, it follows easily that $X$ has to be Gorenstein with $K_{X}$ trivial (e.g.  \cite[Lem. 2.7]{hacking} or Shepherd-Barron's Theorem \ref{thmsb}). Finally, the Cartier condition is justified by the observation that in the set-up of Theorem \ref{thmsb} it follows that $\overline{\calL}=\rho_*\calL$ is a relatively ample Cartier divisor on $\overline \sX$. In other words, for $K3$ surfaces we can assume the stronger conditions of Gorenstein (vs. $\bQ$-Gorenstein) in (0) and Cartier (vs. $\bQ$-Cartier) in (2) and still get a proper moduli space. 
\end{remark}

\begin{remark}\label{remcoef} Theorem \ref{thmsb} (Shepherd-Barron) bounds the type for the polarized surface $(X,L)$ that underlie a stable pair in the sense of Definition \ref{defstable} (see also Thm. \ref{thmthompson} for degree $2$). Since $H$ varies in a linear system ($L=\calO_X(H)$), we get also a bounded type for $(X,H)$.  
We conclude that there exists an $\epsilon_0>0$ (depending on $d$) such that for any $\epsilon\in(0,\epsilon_0)$ the stability condition (1)  does not change.
On the other hand, one can use other coefficients to compactify the moduli of pairs, say require $(X,\alpha H)$ to be slc for some fixed coefficient $\alpha\in(0,1)$. If $\alpha<\epsilon_0$, we get the stable pairs of Def. \ref{defstable}. For larger $\alpha$, typically the KSBA approach would modify even the interior of $\calP_d$. For instance, Example \ref{examplealpha} below shows that for $d=2$ there exists a $K3$ surface $X$ (with ADE singularities) and an ample Cartier divisor $H$ such that  $(X,\alpha H)$ is not log canonical for all $\alpha>\frac{1}{8}$  (in particular, $\epsilon_0\le \frac{1}{8}$ for $d=2$). It would be interesting to determine the critical values of $\alpha$ (or even $\epsilon_0$) for which the moduli problem changes. For $d= 2$, the GIT approach used in this paper can identify  some of the critical values for $\alpha$ (see also \cite{raduthesis}). For some related discussion (for del Pezzo surfaces) from the perspective of MMP see \cite{cheltsov}.
\end{remark}
\begin{example}\label{examplealpha}
Consider the following special plane sextic: $C=L+Q$, where $L$ is a line, $Q$ is a quintic with an ordinary node at $p$, and $L$ meets $Q$ with multiplicity $5$ at $p$. Then, the associated double cover $\overline{X}\to \bP^2$ will have a $D_{10}$ singularity over $p$. Let $X\to \overline{X}$ be the minimal resolution, and $\pi:X\to \bP^2$ the composite map. Let $E_i$ be the exceptional $(-2)$-curves (giving a $D_{10}$ graph), $L'$ be the strict transform of $L$ on $X$, and $H=\pi^*L$. Note that $L'$ is also a $(-2)$-curve and meets only $E_{10}$ (giving a $T_{2,3,8}$ graph; N.B. $D_{10}=T_{2,2,8}$). A simple computation shows that 
$$H=\pi^* L=\sum_{k=1}^8 k E_k+4E_9+5E_{10}+2L'.$$
 We conclude that the degree $2$ $K3$ pair $(X,\alpha H)$ (or equivalently $(\overline X,\alpha \overline H)$) is log canonical iff $\alpha\le \frac{1}{8}$.  
\end{example}

As already mentioned, a key result that allows us to conclude that the stable pairs give a compactification for $\calP_d$ is the following theorem of Shephed-Barron \cite{sbpolarization} (see also \cite{ksb} and \cite{kawamata}). 
\begin{theorem}[{Shepherd-Barron \cite[Thm. 2]{sbpolarization}}]\label{thmsb}
Let $\pi:\sX\to \Delta$ be a semistable degeneration of $K3$ surfaces with $K_{\sX}\equiv 0$ (i.e. a Kulikov degeneration). Assume $\calL\in\Pic(\sX)$ is nef and $\calL_{\mid X_t}$ is a polarization for all $t\in \Delta^*$. Then, for all $n\ge 4$, $\calL^{n}$ is generated by $\pi_*\calL^n$ and defines a birational morphism
$$\rho:\sX\to\overline{\sX}=\Proj_\Delta \left( \oplus_n\pi_*\calL^n\right)$$
defined over $\Delta$ such that 
\begin{itemize}
\item[a)] $\overline{\sX}$ is Gorenstein with $K_{\overline{\sX}}\equiv 0$;
\item[b)] $\overline{\sX}$ has canonical singularities;
\item[c)] $\overline{X}_0$ is Gorenstein with slc singularities. 
\end{itemize}
\end{theorem}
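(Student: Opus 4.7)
The plan is to construct $\rho$ as the relative contraction associated to $\calL$ over $\Delta$, and then transfer the local properties of $\sX$ (which is smooth, hence Gorenstein canonical, with $K\equiv 0$) across this contraction. The three conclusions (a)--(c) should follow formally once one knows that $\rho$ is a crepant contraction and that the central fiber $\overline{X}_0$ is a Cartier divisor on $\overline{\sX}$.

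First I would establish that $\calL$ is relatively semi-ample over $\Delta$. Since $\calL_{|X_t}$ is ample for $t\neq 0$, $\calL$ is $\pi$-big, and $K_\sX\equiv 0$ implies $\calL-K_\sX$ is $\pi$-nef and $\pi$-big, so the relative Kawamata--Shokurov base-point-free theorem gives that $\calL^n$ is $\pi$-generated for $n\gg 0$; the effective bound $n\ge 4$ would be extracted by restricting to $X_0$ and using Reider-type/Kawamata--Viehweg analysis on each component of the Kulikov fiber (this is where the specific shape of Type II/III central fibers enters). Given semi-ampleness, set $\overline{\sX}:=\Proj_\Delta(\bigoplus_n\pi_*\calL^n)$ and let $\rho:\sX\to\overline{\sX}$ be the associated morphism; by construction $\rho$ is birational (as $\calL$ is relatively big) and contracts exactly the curves $C\subset X_0$ with $\calL\cdot C=0$.

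Next I would verify the listed properties. For (a), note that $K_\sX\equiv 0$ and $\rho$ is $\calL$-trivial, so $\rho$ is $K$-trivial; since every contracted curve $C$ satisfies $K_\sX\cdot C=0$, one has $K_\sX=\rho^*K_{\overline{\sX}}$ at least numerically, and because $\rho$ is a projective birational morphism with connected fibers and $\sX$ is Gorenstein with $\omega_\sX\cong\scrO_\sX$, a direct duality/projection-formula computation gives $\omega_{\overline{\sX}}\cong\scrO_{\overline{\sX}}$, in particular $\overline{\sX}$ is Gorenstein with $K_{\overline{\sX}}\equiv 0$. For (b), since $\sX$ is smooth (so canonical) and $K_\sX=\rho^*K_{\overline{\sX}}$, the morphism $\rho$ is crepant, and a crepant resolution of a Gorenstein variety shows that $\overline{\sX}$ has canonical singularities. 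For (c), observe that $\overline{X}_0=\rho_*X_0$ is a Cartier divisor on $\overline{\sX}$ (it is the scheme-theoretic fiber over $0\in\Delta$, pulled back from the smooth base); by adjunction $\omega_{\overline{X}_0}\cong(\omega_{\overline{\sX}}\otimes\scrO(\overline{X}_0))_{|\overline{X}_0}$ is invertible, so $\overline{X}_0$ is Gorenstein, and inversion of adjunction applied to the log canonical pair $(\overline{\sX},\overline{X}_0)$ (canonical total space plus reduced Cartier divisor) yields that $\overline{X}_0$ is slc.

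The main obstacle I expect is the semi-ampleness step, specifically in its effective form. Nefness alone does not suffice on the Kulikov central fiber because individual components can carry $\calL$-trivial subsheaves, and the base-point-free theorem has to be applied componentwise and then globalized using the normal crossings gluing. One must handle the Type II case (double curves are elliptic, so Fujita-type vanishing is delicate) and the Type III case (triple points on a combinatorial K3) separately, and in each case show that the $\calL$-trivial locus is a disjoint union of complete curves of specific type (chains of $(-2)$-curves, fibers of ruled components contracted to elliptic double curves, etc.) so that the contraction yields only canonical Gorenstein singularities rather than worse ones. Once this geometric structure of the contracted locus is pinned down, steps (a)--(c) reduce to standard adjunction-theoretic verifications.
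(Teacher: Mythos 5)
There is nothing in the paper to compare against: Theorem \ref{thmsb} is not proved there, it is quoted from Shepherd-Barron \cite[Thm. 2]{sbpolarization}, and the paper only adds a remark translating ``admissible singularities'' into Gorenstein slc (via \cite[Thm. 4.21]{ksb}) and observing that items b) and c) are equivalent by \cite[Thm. 5.1]{ksb}. So your proposal has to be measured against Shepherd-Barron's original argument, which is an explicit study of linear systems on the components of Type II/III Kulikov fibers (the kind of Friedman--Harbourne material recalled in Section \ref{sectpolanti}), not an appeal to the base-point-free theorem. Your MMP-style outline does recover the qualitative statement: relative Kawamata--Shokurov base-point-freeness applies since $\sX$ is smooth, $K_{\sX}\equiv 0$ and $\calL$ is $\pi$-nef and $\pi$-big, giving $\pi$-semi-ampleness for $n\gg 0$; and once one knows $K_{\overline{\sX}}$ is $\bQ$-Cartier (which needs a word --- e.g. $\rho$ is an isomorphism over the complement of a codimension-two subset of $\overline{\sX}$, so the reflexive canonical sheaf of $\overline{\sX}$ is trivial; Gorenstein then needs Cohen--Macaulayness, via canonical $\Rightarrow$ rational), the negativity lemma gives $K_{\sX}=\rho^*K_{\overline{\sX}}$ and hence a), b) formally, with no need to classify the contracted locus as you worry in your last paragraph. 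What your sketch does not do is the actual content of the cited theorem, namely the effective bound $n\ge 4$: deferring it to ``Reider-type/Kawamata--Viehweg analysis on each component'' is precisely where Shepherd-Barron's work lies (including lifting sections from $X_0$, i.e. surjectivity of $\pi_*\calL^n\to H^0(X_0,\calL^n_{\mid X_0})$), so as a proof of the stated theorem this is a genuine gap, even though the weaker ``for $n\gg 0$'' version suffices for the way the theorem is used later in the paper (e.g. Theorem \ref{thmksba}).

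There is also a faulty step in your justification of c): a canonical total space plus a reduced Cartier divisor is \emph{not} in general a log canonical pair (already $(\bA^3, V(x^2-y^3))$ fails), so you cannot invoke inversion of adjunction on those grounds alone. The correct route, which is available with what you have set up, is crepancy of the log pair: $X_0=\rho^*\overline{X}_0$ and $K_{\sX}=\rho^*K_{\overline{\sX}}$ give $K_{\sX}+X_0=\rho^*(K_{\overline{\sX}}+\overline{X}_0)$, and $(\sX,X_0)$ is snc with reduced boundary, hence lc; therefore $(\overline{\sX},\overline{X}_0)$ is lc, and then \cite[Thm. 5.1]{ksb} (the equivalence the paper's remark points to) yields that the Gorenstein fiber $\overline{X}_0$ is slc. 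With that substitution, and with the $\bQ$-Cartier/Cohen--Macaulay points made explicit, your argument gives a correct modern proof of a)--c), but not of the effective statement actually attributed to \cite{sbpolarization}.
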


\begin{remark} We note that the original statement of \cite[Thm. 2 (i)]{sbpolarization} refers to {\it admissible singularities} for $\overline{X}_0$, but those are  precisely the Gorenstein slc singularities (see \cite[Def. on p. 34]{sbnef} and \cite[Thm. 4.21]{ksb}). Also, the condition for $\overline{\sX}$  of being Gorenstein with canonical singularities (or equivalently Gorenstein with rational singularities) is not stated in \cite[Thm. 2 (i)]{sbpolarization}, but occurs elsewhere in the text (e.g.  \cite[Thm. 2W]{sbpolarization}). Finally, under the set-up of the theorem, the items b) and c) are equivalent (see \cite[Thm. 5.1]{ksb}).
\end{remark}

Finally, we fit the stable pairs in a moduli functor $\overline{\calP}_d$ as follows:
\begin{definition}\label{defstable2}
For given scheme $B$, $\overline{\calP}_d(B)$ is the set of isomorphism classes of families $(\sX,\calH)/B$ such that
\begin{itemize}
\item[i)] $\sX/B$ is flat and proper;  
\item[ii)] $\sX$ is Gorenstein and $\omega_{\sX/B}$ is trivial;
\item[ii)]  $\calH$ is a relative effective Cartier divisor (in particular, flat over $B$);
\item[iii)] every geometric fiber $(X_t,H_t)$ is a stable pair.
\end{itemize}
\end{definition}

With these preliminaries, we can state the main result of the section: {\it $\overline \calP_d$ is a separated and proper Deligne-Mumford stack}. As usually this follows from the numerical criterion of properness/separateness. This is the content of the following theorem, whose proof is similar to that of Hacking  \cite[Thm. 2.12]{hacking} (see also the semi-stable MMP results, esp. \cite[Thm. 7.62]{km}).  

\begin{theorem}\label{thmksba}
Let $(\calX^*,\calH^*)/\Delta^*$ be a flat family of degree $d$ $K3$ surfaces over the punctured disk. Then there exists a finite surjective base change $\Delta'\to \Delta$ and a family $(\calX,\calH)/\Delta'$ of stable pairs extending the pullback to $\Delta'$ of the original family $(\calX^*,\calH^*)$ such that $\calX$ is Gorenstein with trivial $K_\calX$ and $\calH$ is an effective relative Cartier divisor. Furthermore, the family $(\calX,\calH)$ is unique up to further base change.
\end{theorem}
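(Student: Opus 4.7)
\textbf{Proof plan for Theorem \ref{thmksba}.}  The strategy is the standard one in the KSBA/Hacking framework: apply semistable reduction to bring the family to Kulikov form, then produce the stable limit as the relative log canonical model of a pair of the form $(\sX',\epsilon \calH')$, and finally invoke Shepherd-Barron's theorem to obtain the required Gorenstein/Cartier structure.

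First, after a finite base change $\Delta'\to \Delta$ (which we absorb into $\Delta$), Kulikov--Persson--Pinkham gives a semistable model $\pi:\sX'\to \Delta$ extending $\sX^*$, with $\sX'$ smooth and $K_{\sX'}\sim 0$ relative to $\Delta$. Take $\calH'\subset \sX'$ to be the scheme-theoretic closure of $\calH^*$; since $\sX'$ is smooth, $\calH'$ is a Cartier divisor, flat over $\Delta$ in a neighborhood of the general fiber, and after possibly subtracting vertical components supported on the central fiber it is an effective relative Cartier divisor.  Set $\calL=\calO_{\sX'}(\calH')$; on the general fiber $\calL_{\mid X_t}$ is an ample polarization of degree $d$.

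Next, pass to a nef model: by running a sequence of flops in the central fiber $X_0'$ (which preserve $K_{\sX'}\sim 0$), one arranges that $\calL$ is nef on $\sX'$.  Now Shepherd-Barron's Theorem \ref{thmsb} applies: for $n\gg 0$ the sheaf $\pi_*\calL^n$ is locally free and the relative $\Proj$
$$\rho:\sX'\longrightarrow \overline{\sX}=\Proj_\Delta\bigoplus_n\pi_*\calL^n$$
contracts precisely the $\calL$-trivial curves, giving a flat family $\overline{\sX}\to \Delta$ that is Gorenstein with $K_{\overline{\sX}}\equiv 0$ and whose central fiber $\overline{X}_0$ is Gorenstein with slc singularities.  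By construction $\overline{\calL}:=\rho_*\calL$ is a relatively ample line bundle, and the divisor $\overline{\calH}:=\rho_*\calH'$ is an effective relative Cartier divisor representing $\overline{\calL}$ (as noted in Remark \ref{remksbadef}, the Cartier property follows because $\overline{\calL}$ is relatively very ample after Proj).  Setting $\calX=\overline{\sX}$, $\calH=\overline{\calH}$ gives an extension satisfying conditions (0), (2) and (3) of Definition \ref{defstable}.

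The one remaining stability condition is (1), that $(\calX_0,\epsilon \calH_0)$ be slc for small $\epsilon>0$, i.e.\ that $\calH_0$ avoid the lc centers of $\calX_0$ (the double locus and the degenerate cusp/simple elliptic points).  If $\calH_0$ does contain an lc center, one performs a final modification: since $K_{\calX}\sim 0$, running the relative MMP for $(\calX,\epsilon\calH)$ consists only of flips along curves $C$ with $\calH\cdot C<0$, and these are exactly the curves along which $\calH_0$ meets an lc center transversally with the wrong sign; after finitely many such flips the log canonical model of $(\calX,\epsilon\calH)/\Delta$ is achieved and gives the desired stable pair.  Because $\epsilon$ is chosen in the open interval $(0,\epsilon_0)$ of Remark \ref{remcoef}, the resulting model is independent of $\epsilon$.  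Uniqueness is then automatic: two stable extensions of $(\sX^*,\calH^*)$ would both be relative log canonical models of $(\sX',\epsilon\calH')$ over $\Delta$, and relative log canonical models are unique when they exist (cf.\ \cite[Thm.\ 7.62]{km}), so the two families agree after a further finite base change.

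The main obstacle is the nefness/MMP step: producing $\calL$-nef (equivalently $\overline{\calH}$-nef) models and tracking that the flips required for the $\epsilon\calH$-MMP actually terminate and preserve the Gorenstein and $K$-trivial structure.  This is precisely what Shepherd-Barron's Theorem \ref{thmsb} and Hacking's argument in \cite[Thm.\ 2.12]{hacking} are designed to handle in the $K3$/log general type setting, so the proof reduces to assembling those inputs rather than proving new MMP results.
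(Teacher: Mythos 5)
Your outline matches the paper's strategy up to the point where the naive limit fails to be a stable pair, but the fix you propose there does not work, and this is exactly the step the paper's proof is about. After the Kulikov model and Shepherd-Barron's theorems you have $(\overline{\sX},\overline{\calH})/\Delta$ with $K_{\overline{\sX}}\equiv 0$ and $\overline{\calH}$ \emph{relatively ample}. If $\overline{H}_0$ passes through a log canonical center of $\overline{X}_0$ (contains a double curve, or passes through a simple elliptic or cusp point), you claim this is repaired by ``flips along curves $C$ with $\calH\cdot C<0$'' in the relative MMP for $(\calX,\epsilon\calH)$. But there are no such curves: $\overline{\calH}$ is already ample over $\Delta$ and $K_{\overline{\sX}}\equiv 0$, so the family is its own relative log canonical model and the MMP you invoke terminates immediately without changing anything -- the central fiber pair stays non-slc. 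Moreover the degeneracy need not be seen by any curve at all (e.g.\ $\overline{H}_0$ passing through an isolated simple elliptic singularity), so ``curves meeting an lc center with the wrong sign'' is not even the right locus. The actual mechanism, which your proposal omits, is the base-change-and-twist operation: one returns to a semistable model, performs a further finite base change (which inserts new components -- elliptic ruled surfaces, subdivided triangulations -- into the central fiber), observes that the pulled-back divisor now contains a whole component $V_i$ of the new central fiber, and twists by $\calO(-V_i)$ to restore flatness. It is this twist, i.e.\ a \emph{different choice of extension} of $\calH^*$ across $0$, that changes which components are $0$-surfaces and hence replaces the limit surface $\overline{X}_0$ by a genuinely different $\overline{X}'_0$ (for instance the double cover with two $\Ee$ singularities gets replaced by a union of two degree $1$ del Pezzo surfaces, as in \S\ref{semistablereplace}); no sequence of $(K+\epsilon\calH)$-flips on the fixed model can produce this. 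The paper then iterates (base change, twist, Shepherd-Barron) until $(\sX,X_0+\epsilon\calH)$ is dlt, and only then is the slc condition achieved; see Remark \ref{rembasechange}, where the paper stresses that the twist is the operation responsible for changing the limit.

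Your uniqueness paragraph is closer to the mark but also glosses over the same issue: the two candidate limits arise from semistable models carrying possibly different (twisted) extensions of the polarizing divisor, so one must first dominate both by a common semistable model after a common base change, and only then invoke uniqueness of relative log canonical models, as in \cite[Thm.\ 2.12]{hacking}. As written, your claim that both limits are lc models of the single pair $(\sX',\epsilon\calH')$ over the original $\Delta$ is not justified.
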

\begin{proof}
Start with a one-parameter family of polarized $K3$ surface $(\sX^*,\calL^*)/\Delta^*$. After a finite base change, one can assume a filling to a semi-stable family $\sX/\Delta$ with $K_{\sX}\equiv \calO_\sX$ (cf. Kulikov--Persson--Pinkham Theorem; alternatively this is a relative minimal model).  
Assume $\calL^*$ is induced by a flat divisor $\calH^*$. 
By Shepherd-Barron \cite[Thm. 1]{sbpolarization}, we can assume that 
the polarizing divisor extends to an effective relative Cartier divisor $\calH$, which then can be assumed also to be nef. Let $\calL=\calO_{\sX}(\calH)$.   By applying another result of Shepherd-Barron (Theorem \ref{thmsb} above), we obtain $(\overline\sX,\overline\calL)$ (the relative log canonical model) which satisfies all the required conditions, except possibly the condition that the limit $(\overline{X}_0,\epsilon \overline{H}_0)$  be slc.

Note that $(\overline\sX,\overline\calL)$ depends only on $\calL$, and not on the choice of divisor $\calH$. As explained elsewhere in the paper, the divisor $\calH$ is important for the separateness of $\overline\calP_d$. Namely, the sections $\calH$ of $\calL$ that give central fibers $(\overline{X}_0,\overline{H}_0)$ that are not stable pairs are not allowed in $\overline \calP_d$ and they are replaced by a different semi-stable model.  In our set-up (i.e. for $0<\epsilon\ll 1$ coefficients), the only obstruction to obtaining stable pairs  is that $ \overline{H}_0$ might pass through a log canonical center of $\overline{X}_0$. This degeneracy condition is  equivalent to $\calH$  containing a double curve of $X_0$ or passing through a triple point, or equivalently to saying $(\sX,\epsilon \calH)/\Delta$ (or the pair $(\sX,X_0+\epsilon \calH)$) is not dlt (compare \cite[Thm. 7.10]{km}). If this is the case, we apply a base change, refine the semi-stable model (see Rem. \ref{rembasechange} below), pull-back the polarizing divisor to the new model, and then apply again the Shepherd-Barron's Theorems. After appropriate base changes, $\calH$ will not contain any of the double curves or triple points of $X_0$, and we will be in  a log canonical situation (i.e. $(\sX,X_0+\epsilon \calH)$ is dlt). The claim (that any $1$-parameter family of degree $d$ $K3$ surfaces has as a limit a stable pair) follows.

Finally, two different semi-stable degenerations $(\sX_i,\calH_i)/\Delta$ that agree over $\Delta^*$ are (after a base change) dominated by a third. It follows that the limiting KSBA pairs $(\overline{X}^{(i)}_0,\overline{H}^{(i)}_0)$ are isomorphic  
 due to the standard fact of  MMP  that the log  canonical models are unique  (see  \cite[Thm. 2.12]{hacking} and \cite[\S37, Prop. 6]{kollar}).
\end{proof}

\begin{remark}\label{rembasechange}
For $K3$ surfaces it is easy to understand the effect of a base change on the central fiber $X_0$ of a semi-stable degeneration $\sX/\Delta$. Namely, for Type II degenerations, (for simplicity we can assume) $X_0=V_1\cup_E V_2$ where $V_i$ are rational surfaces glued along an anticanonical smooth (elliptic) curve in both. A base change $\Delta'\to\Delta$ of order $k\ge 2$ has the effect of introducing a curve of $A_{k-1}$ singularities along $E$. The blow-up of the total space along $E$ will give the new semi-stable model $X_0'$, which is a chain of surfaces with two rational ends ($V_1$ and $V_2$) and with some elliptic ruled surfaces in the middle. Similarly, in the Type III Case, $X_0$ is a union of rational surfaces such that the dual graph is a triangulation of $S^2$. The base change has the effect of subdividing each edge of the triangulation in $k$ parts and each triangle in $k^2$ parts in the obvious way (see \cite[p. 278]{fmbook}). We note that if the divisor $\calH$ contains a double curve or triple point, then the pull-back divisor $\calH'$ (after base change) will contain some component $V_i$ of the new central fiber $X_0'$.  Thus, in order to obtain a flat divisor over $\Delta'$, one needs to tensor $\calH'$ by $\calO(-V_i)$ as in \cite[Thm. 1(i)]{sbpolarization}; we call such an operation {\it twist}. After $\calH'$ is arranged to be flat (and not contain a double curve or triple point), the usual semi-stable MMP (\cite[Ch. 7]{km}) implies the theorem. The point that we want to emphasize is that it is the twist operation that allows to change the limit surface from $\overline X_0$ to $\overline X'_0$ (see Section \ref{secttype2} for some concrete examples). 
\end{remark}

\begin{corollary}\label{pdthm}
The moduli stack $\overline \calP_d$ of stable degree $d$ pairs is a proper and separated Deligne--Mumford stack. The associated coarse moduli space is a compactification (proper algebraic space) of the moduli space of degree $d$ $K3$ pairs. 
\end{corollary}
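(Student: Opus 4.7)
The plan is to verify the standard list of properties (algebraicity, boundedness, finite automorphisms, separatedness, properness) and then to invoke Keel--Mori for the coarse space. The valuative criteria are already in hand via Theorem \ref{thmksba}, so the main task is to package the pieces.

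First, I would argue that $\overline{\calP}_d$ is a finite-type algebraic stack. Boundedness is the key input: by Shepherd-Barron's Theorem \ref{thmsb} (applied to any one-parameter smoothing as provided by condition (3) of Definition \ref{defstable}), the underlying surface $X$ of any stable pair arises as the relative log canonical model of a Kulikov degeneration polarized by a nef line bundle $\calL$ with $\calL_{|X_t}$ of degree $d$. In particular, $(X,L)$ lies in a bounded family of Gorenstein polarized slc surfaces with $\omega_X\cong\calO_X$ and $L$ ample Cartier of fixed degree; then $H\in|L|$ varies in the projective space $\bP H^0(X,L)$, which has fixed dimension $g+1$ by the vanishing $H^1(X,L)=0$ discussed in Remark \ref{remdef}. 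This produces a finite-type atlas parameterizing flat families as in Definition \ref{defstable2}, so $\overline{\calP}_d$ is an algebraic stack of finite type over the base field.

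Next, for the Deligne--Mumford property, I would show that the automorphism group $\Aut(X,H)$ of any stable pair is finite and reduced. Since $H$ is an ample Cartier divisor, $\Aut(X,H)\subset\Aut(X,L)$, and the latter is an algebraic group acting faithfully on $H^0(X,L^{\otimes n})$ for $n\gg 0$; its identity component would act trivially on $L$ and hence, via the pluricanonical embedding provided by $L$ (Theorem \ref{thmsb} ensures ampleness of multiples of the analogous line bundle), must be trivial on the polarized surface. The requirement that $H$ be fixed as a divisor further rigidifies the action, and characteristic zero gives reducedness. Thus $\overline{\calP}_d$ has unramified diagonal and is Deligne--Mumford.

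For separatedness and properness, I would apply the valuative criteria over traits. Given a family $(\sX^*,\calH^*)/\Delta^*$ of stable $K3$ pairs, Theorem \ref{thmksba} provides, after a finite base change $\Delta'\to\Delta$, a unique extension to a stable family $(\sX,\calH)/\Delta'$ satisfying all the conditions of Definition \ref{defstable2} (Gorenstein total space with trivial relative dualizing sheaf, relative effective Cartier divisor, stable central fiber). Existence of the extension gives properness; uniqueness of the limit (again from Theorem \ref{thmksba}, where the log canonical model of MMP is unique after base change) gives separatedness. The passage from Definition \ref{defstable} (one-parameter smoothability condition (3)) to families over an arbitrary base $B$ in Definition \ref{defstable2} is the standard deformation-theoretic check: every geometric point of $\overline{\calP}_d$ is smoothable, which suffices.

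Finally, having established that $\overline{\calP}_d$ is a separated, proper, finite-type Deligne--Mumford stack, the theorem of Keel--Mori yields a coarse moduli space that is a proper algebraic space, containing $\calP_d$ as an open dense substack and hence compactifying the moduli space of degree $d$ $K3$ pairs. The main obstacle in the whole argument is really Theorem \ref{thmksba} itself (valuative criterion), which has already been carried out above; the remaining steps are formal consequences of boundedness and the KSBA framework.
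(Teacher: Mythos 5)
Your overall architecture coincides with the paper's own (short) proof: properness and separatedness come from the valuative criterion established in Theorem \ref{thmksba}, the Deligne--Mumford property comes from finiteness of automorphisms of stable pairs, and the coarse space comes from Keel--Mori; your additional remarks on boundedness via Theorem \ref{thmsb} and on finite type are consistent with what the paper uses implicitly. However, your justification of the Deligne--Mumford step contains a genuine error. You assert that the identity component of $\Aut(X,L)$ ``must be trivial on the polarized surface,'' deducing this from faithfulness of the action on $H^0(X,L^{\otimes n})$ and a ``pluricanonical embedding provided by $L$.'' This is false for exactly the boundary objects this moduli problem is designed to handle: since $\omega_X\cong\calO_X$, there is no pluricanonical embedding to appeal to, and degenerate fibers can have positive-dimensional polarized automorphism groups --- the paper itself points out that the cone-type surfaces and the standard tetrahedron in $\bP^3$ carry $\bC^*$ or torus stabilizers, and this failure of finiteness for $(X,L)$ is precisely the reason the paper replaces line bundles by divisors. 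Acting ``trivially on $L$'' (preserving the isomorphism class of $L$) in no way forces triviality on $X$: think of $\mathrm{PGL}_3$ acting on $\bP^2$ preserving $\calO(1)$.

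The correct source of finiteness is that the stable pair $(X,\epsilon H)$ is slc with $K_X+\epsilon H\equiv\epsilon H$ ample, i.e.\ it is a pair of log general type, and automorphism groups of such pairs are finite; this is the standard fact the paper invokes (citing \cite[p.~328]{ksb}). Your closing sentence that fixing $H$ as a divisor ``further rigidifies the action'' gestures at this, but as written it is an unsupported assertion layered on top of a false intermediate claim, so the DM step as you have argued it does not stand. The repair is easy --- delete the $\Aut(X,L)$ argument and cite the log-general-type finiteness result directly for $\Aut(X,\epsilon H)$ --- and with that substitution the rest of your proof (finite-type atlas from boundedness and the constancy of $h^0(X,L)=g+1$, valuative criteria from Theorem \ref{thmksba}, Keel--Mori) goes through and matches the paper's argument.
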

\begin{proof}
The properness of $\overline \calP_d$ was established by Theorem \ref{thmksba}. The fact that  $\overline \calP_d$ is  a Deligne-Mumford stack is the usual statement that  pairs of log general type have finite automorphisms (e.g. \cite[p. 328]{ksb}). Finally, the existence of a coarse moduli space follows from \cite{keelmori}. 
\end{proof}

\begin{remark}
In general, we do not expect that $\overline \calP_d$ is a smooth stack: as noted in Remark \ref{remdef} the local structure of $\overline \calP_d$ near $(X,H)$ is controlled by the deformations of $X$ as a polarized variety. It is likely that projectivity results for $\overline \calP_d$ can be obtained by applying the techniques of Koll\'ar \cite{kollarproj}. Alternatively, more in the spirit of this paper, the quasi-projectivity of $\calP_d$ might follow from GIT and the techniques of Viehweg \cite{viehweg} (see also \cite{viehweg2}). 
\end{remark}

\begin{remark}
The results of Shepherd-Barron  cited above established the existence of reasonable limits for degenerations of polarized $K3$ surfaces. In some sense, the Shah--Looijenga compactification $\widehat \calM$ is a reflection of this fact. However, in absence of a polarizing divisor, the limiting surfaces will not be separated in moduli. For example, the limiting surfaces might not have finite stabilizer, e.g.  the standard tetrahedron in $\bP^3$ is stabilized by a torus, leading to  collapsing of orbits. The presence of a divisor giving a log canonical log general type pair eliminates such pathologies. 
 In other words, the choice of a divisor (vs. line bundle) is essential in separating the boundary points and fitting everything together in a compact moduli space. The example discussed in \S\ref{sectexample} is a  clear illustration of this point. 
\end{remark}

For degree two $K3$ surfaces, the possible central fibers $\overline{X}_0$ of  relative log canonical models  were identified by Thompson \cite{thompson}. Some discussion of $\overline{X}_0$ for general $d$ is done in the following section.

\begin{theorem}[{\cite[Thm. 1.1]{thompson}}]\label{thmthompson}
Let $\sX/\Delta$ be a Kulikov degeneration of $K3$ surfaces. Let $\calH$ be a divisor on $\sX$ that is effective, nef and flat over $\Delta$. Suppose that $\calH$ induces a polarization of degree two on the generic fiber $X_t$. Then the morphism $\phi:\sX\to \overline{\sX}$ taking $\sX$ to the relative log canonical model  of the pair $(\sX,\calH)$ maps the central fiber $X_0$ to  a complete intersection of the following type:
$$\overline{X}_0=\{z^2-f_6(x_i,y)=f_2(x_i,y)=0\}\subset \bP(1,1,1,2,3).$$
\end{theorem}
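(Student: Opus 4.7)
The plan is to identify $\overline{\sX}$ explicitly as a relative complete intersection in the weighted projective bundle $\bP_\Delta(1,1,1,2,3)$, from which the statement about $\overline{X}_0$ is immediate. By Shepherd-Barron's theorem (Theorem \ref{thmsb}), the relative log canonical model is $\overline{\sX}=\Proj_\Delta R$ with $R=\bigoplus_{n\ge 0}\pi_*\calL^n$, where $\calL=\calO_\sX(\calH)$, and $\overline{X}_0$ is Gorenstein with slc singularities and trivial dualising sheaf. Since $\overline{X}_0$ is Gorenstein slc with $\omega_{\overline{X}_0}\cong \calO$, Kodaira vanishing (as cited in Remark \ref{remdef}) gives $H^i(\overline{X}_0,L_0^n)=0$ for $i>0$, $n\ge 1$, so by flat base change $\pi_*\calL^n$ is locally free of rank $\chi(L_t^n)=2+n^2L_t^2/2=n^2+2$.

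\textbf{Finding the generators.} I would first compare the Hilbert series. The obvious candidate ring $\bC[x_0,x_1,x_2,y,z]/(f_2,f_6)$ with $\deg x_i=1$, $\deg y=2$, $\deg z=3$ and $\deg f_i=i$ has Hilbert series
\[
\frac{(1-t^2)(1-t^6)}{(1-t)^3(1-t^2)(1-t^3)}=\frac{1+t^3}{(1-t)^3},
\]
which matches $1+\sum_{n\ge 1}(n^2+2)t^n$ after a brief calculation. This tells me exactly what generators and relations to expect. I would then take $(x_0,x_1,x_2)$ to be an $\calO_\Delta$-basis of $\pi_*\calL$; in the non-unigonal case these already generate $\pi_*\calL^2$, while on unigonal limits the kernel of $\Sym^2\pi_*\calL\to\pi_*\calL^2$ is positive-dimensional (it is the relation cutting out the conic image of $|L|$), so to handle all fibers uniformly I introduce a section $y$ of $\pi_*\calL^2$ and a section $z$ of $\pi_*\calL^3$, the latter corresponding to the anti-invariant part of the double-cover structure (note that $h^0(3L_t)=11>\dim\Sym^3 H^0(L_t)=10$).

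\textbf{Identifying the relations.} The map $\bC[x_0,x_1,x_2,y,z]\twoheadrightarrow R$ must have a kernel; its dimension in each degree is controlled by the Hilbert series calculation above. Since the source and the expected quotient differ only in degrees $\ge 2$ by a complete intersection pattern, the kernel is forced to be generated by one element $f_2$ in degree 2 and one element $f_6$ in degree 6, giving the stated bidegree. The geometric meaning is transparent on each fiber: $f_2$ either contains $y$ linearly (non-unigonal fibers, where one can eliminate $y$ to recover the double cover $z^2=f_6(x_0,x_1,x_2)$ in $\bP(1,1,1,3)$ branched over a sextic) or involves only the $x_i$ (unigonal fibers, where $\{f_2=0\}\subset\bP(1,1,1,2)$ is $\Sigma_4^0$ and $\{z^2=f_6\}$ realises $X$ as the $\calO$-double cover of this cone).

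\textbf{Main obstacle.} The nontrivial step is showing that a single degree-$2$ generator $y$ and a single degree-$3$ generator $z$ suffice on every possible central fiber $\overline{X}_0$, and that no further relations arise beyond $f_2$ and $f_6$. This has to be checked across all Kulikov-type degenerations of degree two $K3$ surfaces (Type II and Type III, both unigonal and non-unigonal limits), using the classification and the fact that Shepherd-Barron's theorem bounds the possible $(\overline{X}_0,L_0)$. The Hilbert-series match above reduces this to showing that the natural surjection from the weighted polynomial ring is \emph{at least} as big as claimed, i.e.\ that the generators one writes down actually generate $R$; this follows fiber-by-fiber once one verifies surjectivity of multiplication $\pi_*\calL^m\otimes \pi_*\calL^n\to\pi_*\calL^{m+n}$ for $m+n\ge 4$, using the projective normality of $\overline{X}_0$ with respect to $L_0$ (a consequence of Kodaira vanishing and the explicit double-cover description on each fiber).
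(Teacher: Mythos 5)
First, note that the paper does not prove this statement itself: it is quoted verbatim from Thompson \cite{thompson}, and the machinery the paper develops for the analogous analysis (Section \ref{sectpolanti}, i.e.\ the Friedman--Harbourne results on linear systems on anticanonical pairs, together with Shepherd-Barron's Theorem \ref{thmsb}) is precisely what a proof along your lines would have to invoke. Your overall skeleton is the right one and matches the shape of Thompson's argument: realize $\overline{\sX}=\Proj_\Delta\bigoplus_n\pi_*\calL^n$, check the Hilbert function $n^2+2$, pick generators in degrees $1,2,3$, and match against the Hilbert series of a weighted complete intersection of type $(2,6)$ in $\bP(1,1,1,2,3)$; the bookkeeping you do here (including the need for $y$ on unigonal fibers and $z$ from $h^0(3L_t)=11>10$) is correct.

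The genuine gap is at the step you label the ``main obstacle,'' and the justification you offer there is circular. The Hilbert-series match only forces the complete-intersection presentation \emph{after} you know that $x_0,x_1,x_2,y,z$ generate the section ring of the central fiber, i.e.\ that the multiplication maps $H^0(\overline{X}_0,mL_0)\otimes H^0(\overline{X}_0,nL_0)\to H^0(\overline{X}_0,(m+n)L_0)$ are surjective (and that $h^0(\overline{X}_0,nL_0)=n^2+2$, which needs a vanishing statement on the slc, typically non-normal, surface $\overline{X}_0$ and cohomology-and-base-change, not just vanishing on the smooth fibers). You propose to verify this ``using the projective normality of $\overline{X}_0$\dots and the explicit double-cover description on each fiber,'' but the double-cover description of $\overline{X}_0$ as a cover of $\bP^2$ or $\Sigma_4^0$ \emph{is} the content of the theorem; you cannot assume it to prove generation. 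The actual work is a case analysis of the possible central fibers: one must control $|L_0|$, $|2L_0|$, $|3L_0|$ on the Type II and Type III slc limits, whose components are anticanonical pairs $(V,D;L)$ with $L^2\le 2$ --- base points, fixed components (the unigonal-type configurations $L=kE+R$ of Theorems \ref{thmldn0} and \ref{thmld0}), and the components contracted by $\phi$ ($1$- and $2$-surfaces in Shepherd-Barron's sense). This is exactly what Proposition \ref{classifydeg2} and the Friedman--Harbourne theorems supply in the present paper, and what Thompson carries out fiber by fiber; without that input your argument establishes only that \emph{if} the ring is generated in degrees $1,2,3$ with the expected Hilbert function, then the presentation is the stated one.
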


\begin{remark}
Thompson \cite{thompson} does not consider the polarizing divisor as part of the data so that it is not possible to fit the degenerations in a  moduli space. In fact, no  attempt of constructing a moduli space is made in \cite{thompson}. As explained, keeping track of the polarizing divisor allows us to construct a modular compactification for pairs. Also, even if one is only interested in $K3$ surface, by considering pairs one has a better understanding of how the various points of view - GIT (\cite{shah}), Hodge theoretic (\cite{friedmanannals}, \cite{friedmanscattone}), or abstract MMP (\cite{thompson}) - interact (see Sections \ref{secttype2} and \ref{secttype3} for some concrete examples). 
\end{remark}

\section{Classification of polarized anticanonical pairs in degree two}\label{sectpolanti}
In order to understand the possible boundary points of $\overline \calP_d$, we need to understand the possible central fibers $\overline{X}_0$ of relative log canonical models as in the previous section.  We recall that $\overline{X}_0$ is a contraction of the central fiber $X_0$ of a Kulikov model. Then, depending on the index of nilpotency of the monodromy, the normal crossing variety $X_0=\cup V_i$ is (see \cite[p. 11]{fmbook})
\begin{itemize}
\item either of {\it Type II}, i.e. a chain of surfaces glued along elliptic curves with rational ends and elliptic ruled surfaces in the middle,
\item or of {\it Type III}, rational surfaces such that the dual graph gives a triangulation of $S^2$, the double curves on each $V_i$ form a cycle of rational curves, which is an anticanonical divisor on $V_i$. 
\end{itemize}
Note also that $\overline{X}_0$ depends only on the polarized semi-stable model $(X_0,L_0)$ and not on the degenerating family $(\sX,\calL)$ (see \cite[Lem. 2.17]{sbpolarization}, \cite[Lem. 4.1]{thompson}). In fact, the analysis of Shepherd-Barron \cite{sbpolarization} says that  $\overline{X}_0$ can be essentially recovered from the $0$-surfaces $(V_i,L_i)$ in $X_0$ (with $L_i=L_{0\mid V_i})$, i.e. the components of $X_0$ that are mapped birationally  onto  the image (see  \cite[Def. on p. 145]{sbpolarization}).

Thus, to understand the boundary points in $\overline \calP_d$, it is essential to classify the possible $0$-surfaces that can occur in degree $d$. Note that on a $0$-surface $V_i$ the polarization $L_i$ is big and nef. Also, the degrees of the polarizations on all $0$-surfaces of a polarized semistable $X_0$ satisfy $\sum (L_i)^2=d$. 
Thus, we need to classify  triples $(V,D;L)$, where $(V,D)$ is an anticanonical pair and $L$ is big and nef divisor class with $1\le L^2\le d$. 
 To fix the notation and terminology, we define the following:

\begin{definition}
A {\bf polarized anticanonical surface} is a triple $(V,D;L)$ where
\begin{itemize}
\item[i)] $V$ is a rational surface,
\item[ii)] $D\in|-K_X|$ is a reduced anticanonical divisor, 
\item[iii)]  $L\in \Pic(V)$ is a big and nef divisor class.
\end{itemize}
We say  $(V,D;L)$ is  {\bf relatively minimal} if  any minus-one curve $E$ on $V$ satisfies $L.E>0$. Additionally, we will be mostly concerned with the case that $D$ is at worst nodal, in which case we say $(V,D)$ is of {\bf Type II} or {\bf Type III} if $D$ is a smooth (elliptic) curve or $D$ is a cycle of rational curves  respectively. \end{definition}
\begin{remark}
Any anticanonical pairs $(V,D)$ can obtained by a series of blow-ups of a minimal anticanonical pair (a classification  of such is \cite[Lemma 3.2]{fmiranda}). Specifically, given an anticanonical pair $(V',D')$, the blow-up of a point $p\in D'$ gives another anticanonical pair $(V,D)\to (V',D')$, where $D=\pi^*D-E$.  If $p$ is a node of $D'$ we say that such a blow-up is {\it toric}; if $p$ is smooth on $D'$, we call it {\it non-toric}.  
Consider a blow-up $\pi: (V,D)\to (V',D')$ of anticanonical pairs. Let $L'$ be a big and nef divisor on $(V',D')$ and $L=\pi^*L'$. Clearly, $L$ is still big and nef and the following hold: $L^2=(L')^2$, $L.D=L'.D'$, and $D^2=(D')^2-1$. 
\end{remark}

The previous remark makes clear that for  a meaningful classification of the polarized anticanonical surfaces $(V,D;L)$ it is necessary to assume them to be relatively minimal. 
We note that the relatively minimal condition is a purely numerical condition. Thus,  if needed, we can assume that $(V,D;L)$ is relatively minimal (see also \cite[Lem. 2.12(a)]{har2}).  More precisely, a standard application of Riemann--Roch and Hodge index shows that a class $E$ with $E^2=-1$, $E.D=1$, and $E.L=0$ is effective and contains a $(-1)$-curve (orthogonal to $L$) as component. After successive contractions of $(-1)$-curves orthogonal to the polarization, we obtain a relatively minimal surface $(V',D';L')$ such that $\pi:(V,D)\to (V',D')$ is a composition of blow-ups as in the previous remark and $L=\pi^*L'$.

\subsection{Basic observations on polarized anticanonical surfaces} A nef divisor on an anticanonical pair is  always effective (e.g. \cite[Cor. 2.3]{har2}), and in many situations it is easy to compute the dimension of the corresponding linear system. 
\begin{proposition}[{\cite[Lemma 5]{friedmanlin}, \cite[Thm. I.1]{har1}}]\label{proprr}
Let $(V,D)$ be an anticanonical pair and $L$ be a nef divisor. The following hold:
\begin{itemize}
\item[a)] If $D.L>0$, then $h^1(L)=0$. Thus, 
$$h^0(L)=\frac{(L^2+L.D)}{2}+1.$$
\item[b)] If $D.L=0$ and $|L|$ contains a reduced connected member, $h^1(L)=1$.  Thus,
$$h^0(L)=\frac{(L^2+L.D)}{2}+2.$$
\end{itemize}
\end{proposition}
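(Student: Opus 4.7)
\emph{Setup.} The plan is to reduce the computation of $h^i(L)$ to cohomology on the anticanonical curve $D$. First, Riemann--Roch on the rational surface $V$ gives $\chi(L)=1+\tfrac12 L\cdot(L-K_V)=1+\tfrac12(L^2+L\cdot D)$, using $\chi(\mathcal{O}_V)=1$ and $K_V=-D$. Second, by Serre duality $h^2(L)=h^0(-D-L)$, and this vanishes because $-D-L$ has strictly negative intersection with any ample class (since $D$ is a nonzero effective divisor and $L$ is nef). So both formulas in (a) and (b) amount to computing $h^1(L)$.

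\emph{Anticanonical restriction.} Next I would twist the ideal sequence of $D\subset V$ by $\mathcal{O}_V(L)$, using $-D=K_V$, to obtain
\begin{equation*}
0\to \mathcal{O}_V(L+K_V)\to \mathcal{O}_V(L)\to \mathcal{O}_D(L|_D)\to 0.
\end{equation*}
Serre duality shows $h^2(L+K_V)=h^0(-L)=0$ since $L$ is a nonzero nef class. The long exact sequence truncates to
\begin{equation*}
\cdots\to H^0(L|_D)\to H^1(L+K_V)\to H^1(L)\to H^1(L|_D)\to 0,
\end{equation*}
and the key input is the vanishing $H^1(L+K_V)=0$ for any nef $L$ on an anticanonical pair, an anticanonical analogue of Kodaira vanishing that does not require bigness (cf.\ \cite[Lemma 5]{friedmanlin} and \cite[Thm.~I.1]{har1}). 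Granting this, $H^1(L)\cong H^1(L|_D)$, and by adjunction $\omega_D=\mathcal{O}_V(D+K_V)|_D=\mathcal{O}_D$, so $D$ is Gorenstein of arithmetic genus $1$.

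\emph{Case analysis and main obstacle.} In case (a), $\deg L|_D=L\cdot D>0$ forces $h^1(L|_D)=0$ by Riemann--Roch on the arithmetic genus $1$ curve $D$, so $h^1(L)=0$ and the desired formula follows from Riemann--Roch on $V$. In case (b), pick a reduced connected $C\in|L|$; I would argue that $\mathcal{O}_V(L)|_D$ is trivial. Concretely, the canonical section of $\mathcal{O}_V(L)$ with zero locus $C$ restricts to a section of $\mathcal{O}_V(L)|_D$ whose divisor of zeros has degree $L\cdot D=0$, so it is either nowhere vanishing (when $C$ and $D$ share no components) or identically zero along a shared component (a case excluded by the reducedness and connectedness of $C$). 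Thus $L|_D\cong\mathcal{O}_D$, whence $h^0(L|_D)=h^1(L|_D)=1$, and the long exact sequence gives $h^1(L)=1$ and the claimed formula for $h^0(L)$. The principal technical step is the vanishing $H^1(L+K_V)=0$ for non-big nef $L$: this is a strengthening of Kawamata--Viehweg specific to anticanonical rational surfaces, and I expect it to be the main obstacle, with the Type III shared-component analysis in (b) as a secondary point of care.
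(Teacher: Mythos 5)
The paper offers no proof of this proposition at all: it is quoted verbatim from Friedman \cite[Lemma 5]{friedmanlin} and Harbourne \cite[Thm. I.1]{har1}, so the entire mathematical content lies in exactly the step you declare to be "the main obstacle" and then defer back to those same two references. That makes your argument circular as a proof of the statement. Worse, the deferred claim is false as you state it. Your key input is that $H^1(V,L+K_V)=0$ for \emph{any} nef $L$ on an anticanonical pair, "not requiring bigness." Take $V=\mathbb{P}^1\times\mathbb{P}^1$ with $D$ a smooth curve of bidegree $(2,2)$ and $L$ of bidegree $(2,0)$: then $L$ is nef and $L.D=4>0$, so this falls squarely under case a), yet $L+K_V$ has bidegree $(0,-2)$ and $h^1(V,\mathcal{O}(0,-2))=1$. (Equivalently, by Serre duality $h^1(L+K_V)=h^1(-L)=h^0(\mathcal{O}_C)-1$ for $C\in|L|$ effective, and members of $|L|$ here are disconnected.) The conclusion $h^1(L)=0$ is still true in this example, but your exact sequence only exhibits $H^1(L)$ as a quotient of $H^1(L+K_V)$, so once that group is nonzero your reduction to the curve $D$ proves nothing; this is precisely why Friedman's and Harbourne's proofs of a) proceed differently (and why the statement has genuine content beyond Kawamata--Viehweg, which would handle only the big-and-nef case).

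Case b) is in better shape, but for a reason you did not give: there the hypothesis that $|L|$ contains a reduced \emph{connected} member $C$ yields the needed vanishing via Serre duality, $h^1(L+K_V)=h^1(-L)=h^0(\mathcal{O}_C)-1=0$, using $0\to\mathcal{O}_V(-C)\to\mathcal{O}_V\to\mathcal{O}_C\to 0$ and $h^1(\mathcal{O}_V)=0$; so along your lines b) can be completed, whereas a) cannot. Two further points of care in b): your assertion that a reduced connected $C$ cannot share a component with $D$ is not justified (reducedness and connectedness of $C$ by themselves do not exclude $C\supset D_i$ for a component $D_i$ of $D$; this is exactly the delicate point Harbourne's \cite[Cor. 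III.3]{har1} is designed to control), and you should also note that $D$ is connected with $h^0(\mathcal{O}_D)=1$ (which does follow from $h^1(\mathcal{O}_V)=0$) before concluding $h^0(L|_D)=h^1(L|_D)=1$ from $L|_D\cong\mathcal{O}_D$. As it stands, then, the proposal reduces the proposition to an intermediate vanishing that is unproved, attributed circularly to the sources being quoted, and false in the generality needed for case a).
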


As will see in many cases it is possible to classify the polarized anticanonical surfaces $(V,D;L)$ based on the  basic  numerical invariants $L^2$, $L.D$, and $D^2$. As discussed, for degenerations of $K3$ surfaces occurring in degree $d$, we have $1<L^2\le d$.  The following lemmas establish some 
 bounds for $L.D$ and $D^2$ in terms of $L^2$. 
\begin{lemma}\label{lbound1}
Let $(V,D;L)$ be a polarized anticanonical surface. Then
\begin{itemize}
\item[i)] $L.D\equiv L^2 \mod 2$;
\item[ii)] $0\le L.D\le L^2+2$.
\end{itemize} 
\end{lemma}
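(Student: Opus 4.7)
The plan splits cleanly into three independent pieces; the whole lemma is a standard consequence of Riemann--Roch plus Kawamata--Viehweg vanishing on the smooth rational surface $V$, together with $D = -K_V$ being effective.

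For the parity statement (i), I would apply Riemann--Roch on $V$: since $V$ is a smooth rational surface, $\chi(\calO_V)=1$, and since $K_V=-D$, we have
\[
\chi(V,L)\;=\;1+\tfrac12\,L\cdot(L-K_V)\;=\;1+\tfrac12\bigl(L^2+L\cdot D\bigr).
\]
Integrality of $\chi(V,L)$ forces $L^2+L\cdot D$ to be even, so $L\cdot D\equiv L^2\pmod 2$.

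For the lower bound in (ii), the divisor $D$ is effective (a reduced member of $|-K_V|$) and $L$ is nef, so $L\cdot D\ge 0$. For the upper bound, I would apply Kawamata--Viehweg vanishing to the big and nef divisor $L$: this gives $H^i(V,K_V+L)=0$ for $i>0$, so by Riemann--Roch
\[
h^0(K_V+L)\;=\;\chi(V,K_V+L)\;=\;1+\tfrac12\,(K_V+L)\cdot L\;=\;1+\tfrac12\bigl(L^2-L\cdot D\bigr).
\]
Since $h^0(K_V+L)\ge 0$, we immediately conclude $L\cdot D\le L^2+2$.

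There is no real obstacle; the only nontrivial ingredient is Kawamata--Viehweg, which applies since $V$ is a smooth projective surface in characteristic zero and $L$ is big and nef by hypothesis. An alternative route avoiding vanishing would be to establish the classical bound $h^0(V,L)\le L^2+2$ for $L$ big and nef on a surface (after resolving the base locus of $|L|$, the image is a non-degenerate surface of degree $\le L^2$ in $\bP^{h^0(L)-1}$, and a non-degenerate surface in $\bP^N$ has degree $\ge N-1$) and then invoke Proposition \ref{proprr}(a) to convert this into a bound on $L\cdot D$; but the KV--RR argument above is strictly shorter and also naturally explains when equality $L\cdot D = L^2+2$ occurs, namely precisely when $K_V+L$ is ineffective.
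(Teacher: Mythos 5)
Your proof is correct, and for the upper bound it takes a genuinely different route from the paper. For part (i) the two arguments are essentially the same parity fact: the paper phrases it as the evenness of the orthogonal complement of $K_V$ in $\Pic(V)$ (i.e.\ $K_V$ is a characteristic element), while you read it off from integrality of $\chi(L)$ in Riemann--Roch; either way it is adjunction. For part (ii), the paper invokes Harbourne's result that $|L|$ is base point free and birational once $L.D\ge 3$, takes a reduced irreducible general member, and uses $2p_a(L)-2=L^2-L.D\ge -2$ (the cases $L.D\le 2$ being trivial since $L^2\ge 1$); you instead apply Kawamata--Viehweg vanishing and Riemann--Roch to $K_V+L$ and use $h^0(K_V+L)\ge 0$. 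Your computation is exactly the one the paper itself performs in the adjacent Lemma \ref{lemtwist}, so it is perfectly consonant with the text, it avoids the external input from \cite{har2}, it treats all values of $L.D$ uniformly, and it pinpoints the equality case $L.D=L^2+2$ as $K_V+L$ ineffective (consistent with Proposition \ref{propclassify}). What the paper's argument buys in exchange is the geometric interpretation of the bound as nonnegativity of the arithmetic genus of a member of $|L|$, in line with the section's systematic reliance on the Friedman--Harbourne results about linear systems on anticanonical pairs.
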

\begin{proof}
The first part follows from the fact that the orthogonal complement in $\Pic(V)$ of the canonical class $K_V(=-D)$ is an even lattice. 

For $L.D\ge 3$ the linear system $L$ is base point free and defines a birational map (e.g. \cite[Prop. 3.2]{har2}). Thus, to prove ii), without loss of generality we can assume  that the general member of $L$ is reduced and irreducible. Then, $2p_a(L)-2=L^2-L.D\ge -2.$
\end{proof}

To control $D^2$, we distinguish two cases: either $L.D\le L^2$ or $L.D=L^2+2$. To handle the first case the key observation is that it is possible to {\it twist  the polarization} (compare Rem. \ref{rembasechange}), i.e. replace $L$ by $L-D$. 
\begin{lemma}\label{lemtwist}
Let $(V,D;L)$ be a polarized anticanonical surface. Then $L-D$ is effective iff $L.D\le L^2$.  
\end{lemma}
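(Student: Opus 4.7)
The plan is to handle the two implications separately. The forward direction is a one-line intersection calculation; the converse is a standard Riemann--Roch plus Serre duality check on a rational surface.

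For the implication $L-D \text{ effective} \Rightarrow L.D \le L^2$: if $L-D$ is represented by an effective divisor, then since $L$ is nef we may intersect to obtain $L\cdot(L-D)\ge 0$, which rearranges to $L^2\ge L.D$.

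For the converse, I would compute $\chi(L-D)$ directly. Since $V$ is rational we have $\chi(\calO_V)=1$, and writing $L-D=L+K_V$, Riemann--Roch yields
$$\chi(L-D)=\frac{(L+K_V)\cdot L}{2}+1=\frac{L^2-L.D}{2}+1.$$
The parity statement in Lemma \ref{lbound1}(i) guarantees this is an integer, and the hypothesis $L.D\le L^2$ gives $\chi(L-D)\ge 1$. To conclude $h^0(L-D)\ge 1$ it suffices to show $h^2(L-D)=0$. By Serre duality on the smooth rational surface $V$,
$$h^2(L-D)=h^2(L+K_V)=h^0(-L),$$
and $-L$ cannot be effective: were it represented by an effective divisor, pairing with the nef class $L$ would give $L\cdot(-L)\ge 0$, i.e.\ $L^2\le 0$, contradicting bigness of $L$ (so $L^2>0$). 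Hence $h^0(L-D)\ge\chi(L-D)\ge 1$, proving that $L-D$ is effective.

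I do not anticipate any real obstacle in this proof; the argument is entirely formal once one observes that bigness (not just nefness) of $L$ is what powers the vanishing $h^2(L-D)=0$ via Serre duality. The only bookkeeping point worth emphasizing is that the parity lemma is needed to see that the Riemann--Roch quantity is a genuine integer, so that $\chi(L-D)\ge 1$ under the equality-allowed hypothesis $L.D\le L^2$.
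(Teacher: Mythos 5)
Your proof is correct. It follows the same basic line as the paper --- Riemann--Roch applied to the adjoint class $L-D=L+K_V$ --- but you handle the cohomology more by hand: the paper simply invokes Kodaira--Mumford (Kawamata--Viehweg) vanishing for the big and nef class $L$, giving $h^i(L+K_V)=0$ for $i>0$ and hence the exact count $h^0(L-D)=1+\tfrac{1}{2}(L-D)\cdot L$, from which both implications drop out at once. You instead prove only $h^2(L-D)=0$ via Serre duality ($h^2(L+K_V)=h^0(-L)$, killed by bigness and nefness of $L$) and settle the forward implication separately by intersecting the effective class $L-D$ with the nef class $L$. Your route is marginally more elementary (no vanishing theorem beyond Serre duality), at the small cost of getting only $h^0\ge\chi$ rather than the exact value of $h^0$, which is all the lemma needs. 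One minor remark: the appeal to the parity statement of Lemma \ref{lbound1}(i) is not needed --- $\chi(L-D)$ is an integer by Riemann--Roch regardless, and the inequality $\chi(L-D)\ge 1$ follows from $L.D\le L^2$ without any integrality consideration.
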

\begin{proof}
Note that $L_0:=L-D=L+K_V$ is an adjoint linear system with $L$ big and nef. Thus, by Kodaira-Mumford vanishing,  $h^i(L_0)=0$. We conclude $h^0(L_0)=1+\frac{1}{2}(L-D)L$; the claim follows.
\end{proof}
\begin{lemma}\label{lbound2}
Let $(V,D;L)$ be a polarized anticanonical surface. Assume additionally that $(V,D;L)$ is relatively minimal and $L.D\le L^2$. The following hold:
\begin{itemize}
\item[i)] $L-D$ is nef;
\item[ii)] $2L.D-L^2\le D^2\le L.D$, and the inequality on the right is strict unless $L\sim D$. 
\end{itemize}
\end{lemma}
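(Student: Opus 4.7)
The plan is to prove (i) by contradiction using the relative minimality assumption, and then to derive (ii) via intersection with $L-D$. For (i), Lemma \ref{lemtwist} guarantees that $L-D$ is effective. Suppose for contradiction there is an irreducible curve $E$ with $(L-D).E<0$; then $E$ must appear as a component of $L-D$. Writing $L-D = aE+R$ with $a>0$ and $R$ effective not containing $E$, the inequality $aE^2 + R.E < 0$ (with $R.E \ge 0$) forces $E^2 < 0$. Adjunction (using $K_V = -D$) gives $E.D = E^2 + 2 - 2p_a(E)$, so combining $E^2 < 0$ with $L.E \ge 0$ and $D.E > L.E$ forces $D.E = 1$, $L.E = 0$, and $E^2 = -1$. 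Thus $E$ is a $(-1)$-curve with $L.E = 0$, contradicting the relative minimality of $(V,D;L)$.

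For (ii), by (i) the class $L-D$ is nef. The inequality $D^2 \ge 2L.D - L^2$ is equivalent to $(L-D)^2 \ge 0$, which holds for any nef divisor on a smooth projective surface. The inequality $D^2 \le L.D$ is equivalent to $(L-D).D \ge 0$, which follows from pairing the nef class $L-D$ with the effective divisor $D$.

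For the strict inequality, suppose $(L-D).D = 0$ and set $M := L-D$. Then $M.L = M^2 + M.D = M^2$. If $M^2 > 0$, then $M^\perp \subset \mathrm{NS}(V)_{\bR}$ is negative definite by the Hodge index theorem; but $D \in M^\perp$ with $D^2 = L.D \ge 0$, forcing $D$ to be numerically trivial, which is impossible for a nonzero effective divisor on the rational surface $V$ (whose Picard group is torsion-free). If $M^2 = 0$, then $M.L = 0$ while $L^2 > 0$ by bigness, so Hodge index forces $M$ numerically trivial, hence $M \sim 0$, i.e., $L \sim D$.

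The main obstacle is this final strictness argument: both subcases require a careful application of Hodge index, and the case $M^2 > 0$ additionally relies on the observation that a nonzero effective divisor on a smooth rational surface cannot be numerically trivial. Everything else reduces to adjunction, the effectivity of $L-D$ from Lemma \ref{lemtwist}, and the relative minimality hypothesis.
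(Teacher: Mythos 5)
Your proof is correct, but it takes a somewhat different route from the paper's. For part (i) the paper simply invokes Harbourne \cite[Lem.\ III.9(c)]{har1} (together with the effectivity of $L-D$ from Lemma \ref{lemtwist}), whereas you prove nefness from scratch: assuming $(L-D).E<0$ for an irreducible curve $E$, you use effectivity to force $E$ into the support of $L-D$, get $E^2<0$, and then adjunction with $K_V=-D$ pins down $E$ as a $(-1)$-curve with $L.E=0$, contradicting relative minimality -- this is exactly the kind of computation the paper only sketches in the remark preceding its ``basic observations'' subsection, so your argument makes the lemma self-contained. For part (ii), the left inequality is handled identically ($(L-D)^2\ge 0$ for a nef class), but for $D^2\le L.D$ the paper uses the Hodge index bound $D^2\le (L.D)^2/L^2\le L.D$ (where the hypothesis $L.D\le L^2$ is used a second time), while you get it more cheaply from $(L-D).D\ge 0$, pairing the nef class with the effective divisor $D$. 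You also spell out the equality case: your two-case Hodge index argument (ruling out $(L-D)^2>0$ because $D$ would be numerically trivial yet effective and nonzero, then forcing $L-D$ numerically trivial and hence $L\sim D$ since $V$ is rational) is an explicit proof of the strictness claim that the paper leaves implicit in the equality analysis of its Hodge index chain. One cosmetic remark: in the case $M^2>0$ the contradiction only needs that a nonzero effective divisor meets an ample class positively; torsion-freeness of $\Pic(V)$ is only needed afterwards to pass from numerical to linear equivalence.
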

\begin{proof}
The first part is precisely \cite[Lem. III.9(c)]{har1} (use $L-D$ is effective by Lemma \ref{lemtwist}). Since $L-D$ is nef, we get $(L-D)^2\ge 0$, which gives the first inequality above. The second inequality follows from Hodge index: $D^2\le \frac{(L.D)^2}{L^2}\left (\le L.D \right)$.  
\end{proof}
In particular, we note the following classification result:
\begin{corollary}\label{cordp}
Let $(V,D;L)$ be a relatively minimal polarized anticanonical surface. Assume that $L.D=L^2$. Then $V$ is a del Pezzo surface and $L\sim D$. 
\end{corollary}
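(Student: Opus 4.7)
The plan is to read the conclusion off almost directly from Lemma \ref{lbound2}, using the hypothesis $L.D = L^2$ to pinch the two-sided inequality in part (ii) of that lemma to an equality, and then invoke the rigidity clause.

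More precisely, I would first substitute $L.D = L^2$ into Lemma \ref{lbound2}(ii). The left inequality gives $D^2 \ge 2L.D - L^2 = L^2$, while the right inequality gives $D^2 \le L.D = L^2$. Hence $D^2 = L.D$, so the strictness clause in Lemma \ref{lbound2}(ii) forces $L \sim D$. Note that the relatively minimal hypothesis was used here only via Lemma \ref{lbound2}(i) to ensure $L - D$ is nef, hence that $(L-D)^2 \ge 0$, which underlies the left inequality; the right inequality is Hodge index applied to the pair $(L,D)$ with $L^2 > 0$, and the equality case of Hodge index is exactly what produces $D \equiv L$. Since $V$ is a smooth rational surface, numerical and linear equivalence coincide on $\Pic(V)$, so the numerical equality upgrades to $L \sim D$.

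For the second conclusion, once $L \sim D$ is known and $D \in |-K_V|$, we have $-K_V \sim L$, and $L$ is big and nef by hypothesis. Thus $-K_V$ is big and nef, i.e.\ $V$ is a (weak) del Pezzo surface in the sense employed in this paper; if one prefers an honest del Pezzo, one may contract the $(-2)$-curves orthogonal to $-K_V$ to obtain a genuine del Pezzo with at worst ADE singularities. As a consistency check, the relatively minimal hypothesis becomes automatic: every $(-1)$-curve $E$ on such a $V$ satisfies $L.E = -K_V.E = 1 > 0$.

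There is no serious obstacle — the content has already been absorbed into Lemmas \ref{lemtwist} and \ref{lbound2}. The only point deserving a line of justification is the passage from numerical to linear equivalence, which is free on a rational surface, and the observation that Hodge index is the mechanism producing the rigidity at equality.
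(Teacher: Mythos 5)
Your argument is correct and is essentially the paper's own proof: pinch the inequalities of Lemma \ref{lbound2}(ii) using $L.D=L^2$ to get $D^2=L.D=L^2$, conclude $L\sim D$ via the equality case of Hodge index (equivalently, the strictness clause of the lemma), and observe that $-K_V\sim L$ big and nef makes $V$ a del Pezzo up to ADE singularities. Your extra remark that numerical equivalence upgrades to linear equivalence on a rational surface is a harmless clarification of a step the paper leaves implicit.
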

\begin{proof}
From Lemma \ref{lbound2}, we get 
$$L^2=2L.D-L^2\le D^2\le L.D=L^2.$$
Thus, $D^2=L.D=L^2$. From Hodge index  applied to the classes $L$ and $D$, we conclude $L\sim D$. It follows that $V$ is a rational surface with a big and nef anticanonical divisor, thus a del Pezzo (possibly with ADE singularities). 
\end{proof}

It remains to consider the case $L.D=L^2+2$. Again, a classification is readily available. 

\begin{proposition}\label{propclassify}
Let $(V,D;L)$ be a polarized anticanonical surface. Assume additionally that $(V,D;L)$ is relatively minimal and that $L.D= L^2+2$. Then,
\begin{itemize}
\item[i)] either   $V\cong \bP^2$ with polarization $L=\ell$ or $2\ell$ (where $\ell$ is the class of a line),
\item[ii)] or $(V,L)$ is the rational normal scroll, i.e. $V\cong \bF_n$ and $L=\sigma+(n+k)f$ for some $k\ge 0$ (where $\sigma$ is the class of the negative section, and $f$ is the class of a fiber). 
\end{itemize}
Moreover, the pairs $(V,D)$ with $V\cong \bP^2$ or $\bF_n$ are classified by \cite[Lem. 3.2]{fmiranda}.
\end{proposition}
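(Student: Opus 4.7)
The plan is to show that $|L|$ exhibits $V$, after contracting $(-2)$-curves orthogonal to $L$, as a surface of minimal degree in projective space, and then invoke the classical del Pezzo--Bertini classification.

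First, I would collect the numerical data. Since $L$ is big and nef, $L^2 \ge 1$, so $L \cdot D = L^2 + 2 \ge 3$. Proposition \ref{proprr}(a) then gives $h^0(L) = L^2 + 2$, and adjunction yields $2p_a(L) - 2 = L^2 - L\cdot D = -2$, so a general member of $|L|$ has arithmetic genus zero. The hypothesis $L \cdot D \ge 3$ also ensures, by the argument used in the proof of Lemma \ref{lbound1}(ii) (i.e.\ \cite[Prop. 3.2]{har2}), that $|L|$ is base-point free and defines a birational morphism $\phi : V \to \bP^{L^2+1}$.

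I would then let $W = \phi(V)$, which is a non-degenerate irreducible surface of degree $L^2$ in $\bP^{L^2+1}$ --- a surface of minimal degree. The classical theorem of del Pezzo--Bertini classifies such $W$ as one of: a linearly embedded plane (only when $L^2 = 1$); the Veronese surface in $\bP^5$ (when $L^2 = 4$); a smooth rational normal scroll; or a cone over a rational normal curve. Because $\phi$ contracts only curves orthogonal to $L$, and the relative minimality of $(V, D; L)$ forbids $(-1)$-curves orthogonal to $L$, the morphism $\phi : V \to W$ is the minimal desingularization of $W$. Unpacking the four cases then forces either $V \cong \bP^2$ with $L = \ell$ or $L = 2\ell$, or $V \cong \bF_n$ with $L = \sigma + (n+k)f$ (with $k \ge 1$ for smooth scrolls, and $k = 0$, $n \ge 2$ coming from the minimal resolution of the cone of degree $n$).

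The main obstacle should be the low-degree edge cases $L^2 \in \{1, 2\}$, where I would need to verify base-point freeness and irreducibility of a general member of $|L|$ carefully; once $\phi$ is in place, invoking the classical classification is essentially automatic. As a numerical cross-check, imposing $L \cdot D = L^2 + 2$ directly on $\bP^2$ (with $L = d\ell$) forces $d^2 - 3d + 2 = 0$, i.e.\ $d \in \{1, 2\}$; and on $\bF_n$ (with $L = a\sigma + bf$) the equation factors as $(a-1)(an - 2b + 2) = 0$, whose second factor is incompatible with the nefness bound $b \ge an$ and relative minimality except for $a = 1$, $b \ge n$ --- recovering exactly the stated list.
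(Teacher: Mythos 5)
Your argument is correct and is essentially the paper's own proof: both use $L\cdot D\ge 3$ together with Harbourne's \cite[Prop. 3.2]{har2} (cf. Prop. \ref{proprr}) to get that $|L|$ is base-point free and birational onto a non-degenerate surface of degree $L^2$ in $\bP^{L^2+1}$, then invoke the del Pezzo--Bertini classification of surfaces of minimal degree and use relative minimality to identify $V$ with the minimal resolution (so the cone case gives $\bF_n$ with $k=0$ and the Veronese gives $(\bP^2,2\ell)$). Your closing numerical cross-check is a harmless extra; just note that on $\bF_0$ the second factor does admit solutions $L=a\sigma+f$, which are nef, big and relatively minimal but coincide with $\sigma+af$ under the automorphism exchanging the two rulings, so the stated list is still complete.
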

\begin{proof}
Let $L^2=n\ge 1$. Since $L.D=n+1\ge 3$, from \cite[Prop. 3.2]{har2} it follows that $L$ is base point free defining a birational morphism from $V$ to a normal surface $V'\subset \bP^{n+1}$ (cf. Prop. \ref{proprr}(i)) of degree $n$. It follows that $V'$ is a surface of minimal degree (see \cite[p. 525]{gh}) and thus it is either the Veronese surface or the rational normal scroll (i.e. $\bF_n$ embedded by $\sigma+(n+k)f$; the case $n=1$, $k=0$ gives $(\bP^2,\ell)$). Finally, note that the morphism $V\to V'$ contracts the curves orthogonal to $L$ and those curves are not $(-1)$-curves. The proposition follows. 
\end{proof}

\begin{remark}
Note $D^2=K_V^2\le 9$ for all rational surfaces $V$. In fact, $b_2(V)=10-D^2$, and then $D^2$ is $9$ or $8$ only for $\bP^2$ or $\bF_n$ respectively. 
For Type III anticanonical pairs, one also considers $r(D)$ the length of the anticanonical cycle, and {\it the charge} $q(V,D):=12-D^2-r(D)$ (e.g. \cite[\S3]{fmiranda}).
Roughly, $9-D^2$, $r(D)-3$, and $q(D)$ count the total number of blow-ups, the number of toric blow-ups, and the number of non-toric blow-ups respectively. If $(V,D)$ is a component of a Type III degeneration of $K3$ surfaces,  $0\le q(V,D)\le 24$ (e.g. \cite[\S3]{fmiranda}).
\end{remark}

\subsection{Linear systems on anticanonical surfaces} We now recall some results on the behavior of linear systems on anticanonical pairs analogous to Mayer's Theorem for $K3$ surfaces.  Results on this topic were first obtained by Friedman \cite{friedmanlin}, and then strengthened by Harbourne \cite{har1,har2}. The following  holds:

\begin{theorem}[{Harbourne \cite[Cor. 1.1]{har2}}]
Let $(V,D;L)$ be a polarized anticanonical surface. Then $|3L|$ always defines a birational morphism from $V$ onto the normal surface obtained by contracting all curves $C$ on $V$ orthogonal to $L$.  
\end{theorem}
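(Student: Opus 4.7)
I would follow the pattern used by Friedman \cite{friedmanlin} and Harbourne \cite{har1,har2}: combine cohomological vanishing for nef and big divisors on anticanonical surfaces, as in Proposition \ref{proprr} and Lemma \ref{lemtwist}, with a reduction to the relatively minimal case and an explicit check on the low-degree surfaces classified by Corollary \ref{cordp} and Proposition \ref{propclassify}.

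\textbf{Step 1: reduction to the relatively minimal case.} If $(V,D;L)$ is not relatively minimal, I would contract the $(-1)$-curves orthogonal to $L$ one at a time to obtain a birational morphism $\pi\colon V\to V'$ of anticanonical surfaces with $L=\pi^*L'$ and $(V',D';L')$ relatively minimal. Because $\pi_*\scrO_V=\scrO_{V'}$ and $R^1\pi_*\scrO_V=0$, one has $H^0(V,3L)=H^0(V',3L')$, so the map defined by $|3L|$ factors as $V\to V'\to\bP^{h^0(3L')-1}$. If $|3L'|$ defines a birational morphism onto a normal surface contracting exactly the curves on $V'$ orthogonal to $L'$, the same holds on $V$: the extra contracted curves are precisely the exceptional divisors of $\pi$ (which are orthogonal to $L$) together with strict transforms of curves orthogonal to $L'$, i.e.\ exactly the curves orthogonal to $L$.

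\textbf{Step 2: base-point freeness of $|3L|$.} Assume from now on that $(V,D;L)$ is relatively minimal. I would split into cases according to $L\cdot D$:
\begin{itemize}
\item If $L\cdot D\ge 3$, then $|L|$ is already base-point free by \cite[Prop.~3.2]{har2}, hence so is $|3L|$.
\item If $L\cdot D=L^2+2$, Proposition \ref{propclassify} identifies $(V,L)$ explicitly as $(\bP^2,\ell)$, $(\bP^2,2\ell)$ or $(\bF_n,\sigma+(n+k)f)$; on each of these surfaces $|3L|$ is classically known to be base-point free (and in fact very ample in most subcases).
\item If $L\cdot D\le L^2$ and $L\cdot D\le 2$, Lemma \ref{lemtwist} yields $L-D$ effective. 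Write $3L=(L-D)+2L+D$ and use the vanishing $h^1(3L)=h^1(2L)=0$ coming from Proposition \ref{proprr} together with Kodaira--Mumford vanishing applied to the adjoint system $3L-K_V=3L+D$ (which is big and nef, with $(3L+D)^2=9L^2+6L\cdot D+D^2$ easily bounded below using Lemma \ref{lbound2}) to conclude via a standard ideal-sheaf argument $H^1(\mathcal{I}_p(3L))=0$ for every $p\in V$. This gives surjectivity of $H^0(3L)\to H^0(3L\otimes k(p))$, i.e.\ no base points.
\end{itemize}

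\textbf{Step 3: birationality and the contraction.} Let $\phi_{|3L|}\colon V\to \bP^N$ be the induced morphism. A curve $C\subset V$ is contracted iff $3L\cdot C=0$ iff $L\cdot C=0$, so the contracted curves are exactly those orthogonal to $L$. Let $\bar V$ denote the normal surface obtained by contracting this (negative-definite, by Hodge index applied in $L^\perp$) configuration; then $\phi_{|3L|}$ factors through a finite morphism $\bar\phi\colon \bar V\to\phi_{|3L|}(V)$. To finish, I would show $\bar\phi$ has degree one. Taking a general $M\in|3L|$, $M$ is an irreducible curve of arithmetic genus $p_a(M)=1+\tfrac{1}{2}(9L^2+3L\cdot D)$ by adjunction, and the restriction $3L|_M$ separates points on $M$ by a standard Serre-duality argument (using $h^1(3L-M)=h^1(0)=1$ and vanishing), giving birationality of $\phi_{|3L|}|_M$, hence of $\phi_{|3L|}$. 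Stein factorization then identifies $\bar\phi$ with the isomorphism onto its image.

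\textbf{Main obstacle.} The delicate part is Step 2 in the regime $L\cdot D\in\{0,1,2\}$, where $|L|$ itself may have fixed components (e.g.\ the negative section on $\bF_n$ when $L=\sigma+nf$) and $(V,D)$ need not be Del Pezzo. Here one cannot argue from $|L|$ directly; one must exploit the adjunction $3L=K_V+(3L+D)$ and the fact that $3L+D$ is big and nef with $(3L+D)^2$ large enough to invoke Reider-type vanishing, together with a careful analysis on the finitely many relatively minimal models allowed by Proposition \ref{propclassify} and Corollary \ref{cordp}. Separating tangent directions at points lying on a $(-n)$-curve of $\bF_n$ is the trickiest sub-case and is where the factor of $3$ (rather than $2$) in $|3L|$ is genuinely needed.
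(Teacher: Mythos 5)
A preliminary remark on the comparison: the paper does not prove this statement at all — it is quoted from Harbourne (\cite[Cor. 1.1]{har2}), and the results the paper does reproduce (Theorems \ref{thmldn0} and \ref{thmld0}, i.e. Harbourne's structure theory for nef divisors on anticanonical surfaces) are exactly the ingredients Harbourne's own proof is built on. So the question is whether your argument stands on its own; in the crucial range it does not.

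The genuine gap is the regime $L\cdot D\le 2$, in particular $L\cdot D=0$, which is precisely the content of the theorem not already contained in Theorem \ref{thmldn0}(i). Three things go wrong there. (a) The vanishing you invoke is false: by Proposition \ref{proprr}(b), $h^1(3L)=1$ when $L\cdot D=0$, and then the sequence $0\to\mathcal{I}_p(3L)\to\calO_V(3L)\to k(p)\to 0$ shows that $H^1(\mathcal{I}_p(3L))=0$ is impossible for any $p$; base-point freeness must instead be proved by showing the connecting map $k(p)\to H^1(\mathcal{I}_p(3L))$ vanishes, which is a different and harder statement. (b) $3L+D$ need not be nef: a component $C$ of the reduced anticanonical divisor $D$ with $L\cdot C=0$ can have $D\cdot C<0$ (blow up many smooth points on one line of a triangle in $\bP^2$; or note that in cases (5)--(6) of Proposition \ref{classifydeg2} the irreducible curve $D$ itself satisfies $L\cdot D=0$, $D^2<0$, so $(3L+D)\cdot D=D^2<0$). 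Hence neither Kodaira--Mumford nor a Reider-type criterion applies to $3L+D$, and the ``standard ideal-sheaf argument'' has no engine. (c) Corollary \ref{cordp} and Proposition \ref{propclassify} classify only the extreme cases $L\cdot D=L^2$ and $L\cdot D=L^2+2$; they say nothing about $L\cdot D\in\{0,1,2\}$ with $L^2$ large, so there is no finite list of relatively minimal models to check, contrary to your closing paragraph. Your birationality step has the same blind spot: the bound $\deg(3L|_M)=9L^2\ge 2p_a(M)+1$ fails exactly when $L\cdot D=0$ (also $p_a(M)=1+\tfrac12(9L^2-3L\cdot D)$, with a minus sign, and $h^1(\calO_V)=0$ for a rational surface, not $1$). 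The missing cases are exactly those of Theorem \ref{thmld0}, e.g. $L=kE+R$ with a genus-one pencil $|E|$ and $R$ a $(-1)$- or $(-2)$-section; Harbourne treats them via the fixed-part analysis of nef divisors quoted in the paper, not via vanishing for the (generally non-nef) divisor $3L+D$. Steps 1 and the case $L\cdot D\ge 3$ are fine, but there birationality of $|L|$ itself is already Theorem \ref{thmldn0}(i), so the substance of the theorem is carried entirely by the cases where your argument breaks down.
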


Similar to $K3$'s , we have the following results on base loci of linear systems on anticanonical surfaces.  For clarity, we separate the cases $L.D>0$ and $L.D=0$.

\begin{theorem}[Friedman, Harbourne]\label{thmldn0}
Let $(V,D;L)$ be a polarized anticanonical surface. Assume that $(V,D;L)$ is relatively minimal and that $L.D>0$. The following hold:
\begin{itemize}
\item[i)] If $L.D\ge 2$, then $|L|$ is base point free. Furthermore, if $L.D\ge 3$, then $|L|$ defines a birational morphism onto a normal surface. 
\item[ii)] If $L.D=1$ and $L$ has no fixed component, then $|L|$ has a unique base point, which is on $D$.
\item[iii)] if $L.D=1$, $|L|$ has a fixed component iff 
$$L=kE+R, \textrm{ for some } k\ge 2,$$ 
where $E^2=0$, $E.D=0$, $E.R=1$, and  $R$ is a $(-1)$-curve.
\end{itemize}
\end{theorem}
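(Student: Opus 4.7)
The plan is to analyze $|L|$ via its restriction to the anticanonical curve $D$, exploiting that $D$ has arithmetic genus one. The main technical input is the adjunction sequence
\[
0 \to \calO_V(L-D) \to \calO_V(L) \to \calO_D(L|_D) \to 0,
\]
combined with Kawamata--Viehweg vanishing $H^i(L-D) = H^i(K_V+L) = 0$ for $i \ge 1$ (valid since $L$ is big and nef on a smooth rational surface). This yields surjectivity $H^0(V,L) \twoheadrightarrow H^0(D,L|_D)$, reducing the base-locus analysis along $D$ to that of $|L|_D|$ on the (possibly nodal, possibly reducible) arithmetic genus one curve $D$.

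For part (i) with $L.D \ge 2$: a degree $\ge 2$ line bundle on the anticanonical cycle $D$ is base-point-free, by a direct Riemann--Roch argument on each component together with a local check at the nodes. The surjectivity above then rules out base points of $|L|$ along $D$. To rule out base points off $D$, I would apply the same vanishing to the ideal sheaf of a hypothetical base point $p \notin D$; alternatively, one observes that an isolated base point off $D$ forces an effective class $E'$ with $E'^2 = -1$, $E'.L = 0$, hence a $(-1)$-curve orthogonal to $L$, contradicting relative minimality. For the birationality statement with $L.D \ge 3$, a degree $\ge 3$ line bundle on an arithmetic genus one curve is very ample, so $\phi_L$ embeds $D$; since the degree of $\phi_L$ onto its image is constant and equals one along the image of $D$, the morphism is birational. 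The image is normal because the only contracted curves $C$ satisfy $L.C = 0$, and by relative minimality none of them are $(-1)$-curves, so they form ADE configurations contractible to rational double points by Artin.

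For part (ii), $L|_D$ has degree one on $D$; Riemann--Roch gives $h^0(L|_D) = 1$ with unique effective representative a single smooth point $p \in D$. Hence $p$ is the unique base point of $|L|$ on $D$. The assumption that $L$ has no fixed component, combined with nefness and relative minimality, rules out further (isolated) base points off $D$ by the argument of part (i).

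For part (iii), decompose $L = F + M$ into fixed and movable parts. Since $L.D = 1$ and both $F$ and $M$ are effective, exactly one of $F.D$, $M.D$ equals one. A numerical analysis, using that $M$ is nef with $M^2 \ge 0$, Hodge index against $L$, and relative minimality (to discard $(-1)$-curves orthogonal to $L$), forces $M = kE$ with $E$ primitive isotropic ($E^2 = 0$, $E.D = 0$) defining an elliptic pencil disjoint from $D$, and $F = R$ a $(-1)$-curve meeting $E$ transversally in a single point. The converse ($L = kE+R$ with $k \ge 2$ implies $R$ is fixed) is a direct verification using $h^0(kE+R) = h^0(kE) = k+1$, so every section of $|L|$ vanishes on $R$.

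I expect part (iii) to be the main obstacle: pinning down the exact numerical shape $L = kE+R$ among all possible effective decompositions of a class with $L.D = 1$. The crucial ingredients are the use of relative minimality to exclude $(-1)$-curves orthogonal to $L$, and the recognition of an elliptic pencil via an isotropic class disjoint from $D$. This is the heart of Harbourne's refinement in \cite{har1,har2} of the earlier analysis of Friedman in \cite{friedmanlin}.
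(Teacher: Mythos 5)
Your overall framework---restricting to $D$ via the sequence $0\to \calO_V(L+K_V)\to \calO_V(L)\to \calO_D(L|_D)\to 0$ together with vanishing for the big and nef class $L$---is the right engine, and it is in fact how the results you are reproving begin. But be aware that the paper does not prove this theorem at all: its proof is a citation, with (i) and (ii) quoted from \cite[Thm.~III.1(a,b)]{har1} and \cite[Prop.~3.2]{har2} (see also \cite[Thm.~10]{friedmanlin}), and (iii) deduced from \cite[Thm.~III.1]{har1} after contracting the $(-1)$-curves orthogonal to $L$. Judged as a standalone argument, your sketch has genuine gaps precisely at the places where the content of those cited theorems lies.

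Concretely: (a) Base points away from $D$ are not handled. The ideal-sheaf vanishing you invoke is a Reider-type argument and needs $L^2$ large, whereas here $L^2$ may be $1$ or $2$; and the alternative claim that an isolated base point off $D$ forces an effective class $E'$ with $E'^2=-1$, $E'.L=0$ is asserted without any justification---this exclusion is exactly where Harbourne's analysis of the fixed and base locus does real work. (b) For $L.D\ge 3$, your birationality argument fails twice: $D$ is allowed to be a cycle of rational curves (the Type III case), and then $L|_D$ can have degree zero on some components, so it is not ample on $D$, let alone very ample---$\phi_L$ contracts those components, so ``$\phi_L$ embeds $D$'' is simply false in that generality; moreover, even when $\phi_L|_D$ is an embedding this does not yield birationality of $\phi_L$ (a double cover embeds its ramification curve), so the inference ``degree one along $D$ hence birational'' is a non sequitur. (c) The forward implication of (iii)---that a fixed component forces exactly $L=kE+R$ with $E^2=E.D=0$, $E.R=1$ and $R$ a $(-1)$-curve---is only announced (``a numerical analysis forces\dots''), and you yourself flag it as the main obstacle; note that relative minimality enters here exactly as in the paper's one-line reduction, namely to contract $(-1)$-curves orthogonal to $L$ before the classification takes this clean shape. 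Your converse computation $h^0(kE+R)=h^0(kE)=k+1$ (via Prop.~\ref{proprr}) is fine, as is the on-$D$ analysis in (i) and (ii); but items (a)--(c) are the substance of the theorem and are missing.
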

\begin{proof}
The first two items follow directly from \cite[Thm. III.1 (a,b)]{har1} and \cite[Prop. 3.2]{har2} (see also \cite[Thm. 10]{friedmanlin}). The last statement follows also from \cite[Thm. III.1]{har1} after contracting the $(-1)$-curves orthogonal to $L$.
\end{proof}

\begin{theorem}[Friedman, Harbourne]\label{thmld0}
Let $(V,D;L)$ be a polarized anticanonical surface. Assume $L.D=0$. Then, one of the following holds:
\begin{itemize}
\item[i)] either $L$ has no fixed component, then $L$ is base point free, $L\otimes \calO_D$ is trivial, and $h^1(X,L)=1$;
\item[ii)] or the fixed part of $L$ is a $(-2)$-curve $R$, then 
$$L=kE+R, \textrm{ for some } k\ge 2$$
with $E^2=E.D=0$, $E.R=1$, and $R\otimes \calO_D$ trivial;
\item[iii)] or $L\otimes D$ is non-trivial, which is equivalent to saying that  the fixed part $F$ of $L$ satisfies $F+K_X$ is an effective divisor.  In this situation, there exists a birational morphism $\pi:(V,D)\to (V',D')$ of anticanonical pairs with $(D')^2<0$ and such that $L=\pi^*(L'+D')$ for some nef divisor $L'$ on $V'$. 
\end{itemize}
\end{theorem}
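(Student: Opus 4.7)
The natural strategy is to restrict $L$ to $D$ and exploit the fact that $L\otimes\calO_D$ is a degree-zero line bundle on a curve of arithmetic genus one. I would start from the short exact sequence
\begin{equation*}
0 \to L(-D) \to L \to L\otimes \calO_D \to 0,
\end{equation*}
observing that $L(-D) = L+K_V$ with $L$ big and nef, so Kawamata--Viehweg gives $h^1(L(-D))=0$. Hence the restriction $H^0(V,L)\to H^0(D,L\otimes\calO_D)$ is surjective. Now on the connected curve $D$ of arithmetic genus one, a non-trivial degree-zero line bundle has no sections, while the trivial one has a one-dimensional space of sections; this gives the clean dichotomy that organises the proof. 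Concretely, $L\otimes\calO_D$ is non-trivial iff every section of $L$ vanishes on $D$, iff $D$ is contained in the fixed part $F$ of $|L|$, iff $F+K_V = F-D$ is effective. This already establishes the equivalence stated in (iii).

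Assume first that $L\otimes\calO_D$ is trivial. Then the map $H^0(L)\twoheadrightarrow H^0(\calO_D) \cong k$ is surjective with kernel $H^0(L(-D))$, giving $h^1(L)=h^1(\calO_D)=1$ via the long exact sequence, which matches Proposition \ref{proprr}(b). If moreover $|L|$ has no fixed component, I would check base-point-freeness by using the relative minimality hypothesis together with the surjection onto $H^0(\calO_D)$ (any base point $p$ of $|L|$ must lie off $D$ since some section of $L$ is nonvanishing on $D$, and a standard argument via a general member $C\in|L|$ combined with Proposition \ref{proprr}(a) applied to adjoint systems rules out base points away from $D$). This yields case (i). If on the other hand $|L|$ has a fixed component $R$, then intersection with $L$ (nef) and with $D$ forces $R.L=0$ and $R.D=0$, so $R$ is supported on curves orthogonal to both. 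Relative minimality prevents $R$ from containing a $(-1)$-curve; Riemann--Roch on components of $R$ then forces $R$ to be a single $(-2)$-curve with $R.D=0$. Writing $L=R+M$ with $M$ the mobile part, the equations $L^2\ge 0$, $L.R=0$, $L.D=0$, $R^2=-2$ pin down $M=kE$ for some integral class $E$ with $E^2=0$, $E.D=0$, $E.R=1$, and $k\ge 2$ (the bound $k\ge 2$ comes from ampleness of the mobile part needing at least a pencil on which $R$ is a section). This delivers case (ii), and the vanishing $R\otimes\calO_D=\calO_D$ follows from $R.D=0$ and the triviality of $L\otimes\calO_D$ together with $kE\otimes\calO_D$ being trivial (since $E.D=0$ and $E$ moves in a pencil restricting non-trivially or trivially according to the degree).

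Now suppose $L\otimes\calO_D$ is non-trivial; by the dichotomy above $F\ge D$, so write $F = D + F'$ with $F'$ effective. The plan for constructing $\pi$ is to contract all curves in $F'$: each irreducible component of $F'$ has non-positive self-intersection and is orthogonal to the mobile part of $L$, and successively contracting such curves (starting with $(-1)$-curves, then $(-2)$-curves in the usual way for anticanonical pairs, as in the argument giving relative minimality) produces a birational morphism $\pi\colon (V,D)\to (V',D')$ of anticanonical pairs. On $V'$ the image of the mobile part $M = L-F$ is a nef class $L'$, and the entire contribution $D+F'$ pushes forward onto $D'$ up to a multiplicity that the intersection theory forces to be one; one then checks $L=\pi^*(L'+D')$ by equality of pullbacks on every component. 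The inequality $(D')^2<0$ comes from the requirement that $L'+D'$ restrict non-trivially to $D'$ (translating the non-triviality of $L\otimes\calO_D$), combined with Hodge index applied to $L'$ and $D'$ on $V'$.

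The main obstacle will be the construction in case (iii): identifying the correct sequence of curves to contract so that the image $(V',D')$ is again an anticanonical pair and the equality $L=\pi^*(L'+D')$ holds on the nose. The rigidity forced by $L.D=0$ and nefness of $L$, together with the characterisation of fixed components via the restriction sequence, should drive the components of $F'$ into a configuration to which Harbourne's contraction lemmas \cite{har1} apply directly; the delicate step is checking that after the contractions $D'$ remains reduced with the predicted negative self-intersection, which one verifies by the adjunction formula combined with the fact that $F'$ meets $D$ transversally (as $F'.D = -D^2 + F.D = L.D - D.D + F'.D$ is controlled by the triviality calculations above).
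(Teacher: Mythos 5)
The paper offers no proof of this statement beyond the citation to Harbourne \cite[Thm. III.1 (c,d)]{har1}, so your from-scratch argument is necessarily a different route. Your opening move --- restrict via $0\to L+K_V\to L\to L\otimes\calO_D\to 0$, use vanishing for the big and nef $L$ to get surjectivity of $H^0(V,L)\to H^0(D,L\otimes\calO_D)$, note that $L$ nef and $L.D=0$ force degree zero on every component of $D$, and split according to whether $L\otimes\calO_D$ is trivial --- is exactly the Friedman--Harbourne strategy and is sound; so are the equivalence asserted in (iii) and the computation $h^1(L)=1$ in case (i) (the base-point-freeness step in (i) is only asserted, but it is at least the true statement).

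The derivation of case (ii), however, contains a genuine error. You claim that nefness of $L$ together with $L.D=0$ ``forces $R.L=0$'', and you then use orthogonality to $L$ (via Hodge index and adjunction) to conclude that $R$ is a single $(-2)$-curve and to ``pin down'' $M=kE$. But a fixed component of a big and nef divisor need not be orthogonal to it: in the very configuration the theorem describes, $L=kE+R$ with $E^2=0$, $E.R=1$, $R^2=-2$, one has $L.R=k-2$, strictly positive for every $k\ge 3$ (such systems do occur --- compare $|kE+R|$ on an elliptic K3 with section, where $R$ is a fixed component although $L.R>0$). So for $k\ge 3$ your intermediate claim contradicts the statement being proved, and the chain ``$R$ orthogonal to $L$ and $D$ $\Rightarrow$ $R$ a $(-2)$-curve $\Rightarrow$ $M=kE$'' collapses; the $(-2)$-condition has to come from $R.D=0$ and adjunction after a different analysis of the mobile part (separating $M^2>0$ from $M^2=0$), not from $L.R=0$. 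Two further problems: you repeatedly invoke relative minimality, which is not a hypothesis of Theorem \ref{thmld0} (one can reduce to it, but you neither perform the reduction nor transport the conclusions back); and in case (iii) the recipe ``contract all curves in $F'$'' is unjustified --- components of $F'=F-D$ need not be orthogonal to the mobile part (same issue as above), need not be contractible through $(-1)$-curves to a smooth anticanonical pair, and certainly do not ``push forward onto $D'$'' once contracted. The morphism $\pi$ should instead contract $(-1)$-curves orthogonal to $L$, after which nefness of $L-D$ (cf. Lemma \ref{lbound2} and \cite[Lem. III.9]{har1}) produces $L'$; your closing bookkeeping ``$F'.D=-D^2+F.D=L.D-D.D+F'.D$'' is circular. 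As it stands the proposal establishes (i) and the trichotomy, but not (ii) or (iii).
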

\begin{proof}
This is precisely \cite[Thm. III.1 (c,d)]{har1} assuming  $L$ big. 
\end{proof}

The items Thm. \ref{thmldn0}(iii) and Thm. \ref{thmld0}(ii) correspond precisely to the unigonal case of Mayer's Theorem. Also, since we are considering only slc pairs, we can assume (if necessary) that $L$ does not contain $D$ as  a fixed component.

\begin{remark}
The key fact that allows Harbourne \cite{har1,har2} to strengthen the results of Friedman \cite{friedmanlin}  is a precise control of Friedman's condition: {\it $L$ has no fixed component which is also a component of the anticanonical cycle}. Namely, \cite[Cor. III.3]{har1} says: {\it Given $L$ a nef divisor on an anticanonical pair $(V,D)$, then either no fixed component of $L$ is a component of any section of $-K_V$ or the fixed part of $L$ contains an anticanonical divisor.} This situation can only occur if $L.D=0$, but $L_{\mid D}\not\cong \calO_D$ (see Thm. \ref{thmld0}(iii) above). 
\end{remark}

\subsection{The degree $2$ case}
We now restrict to the case $(V,D;L)$ is a $0$-surface in a degeneration of degree $2$ $K3$ surfaces. The above discussion leads to the following simple classification of the possibilities.

\begin{proposition}\label{classifydeg2}
Let $(V,D;L)$ be a relatively minimal polarized anticanonical surface with $L^2\le 2$. Then one of the following six cases holds:

\noindent (A) If $L^2=1$
\begin{itemize}
\item[(1)]  and $L.D=3$, then $V\cong \bP^2$, $L\sim \ell$ (where $\ell$ is the class of a line);
\item[(2)] and $L.D=1$, then $V$ is a degree $1$ del Pezzo and $L\sim D\sim -K_V$. 
\end{itemize}
\noindent (B) If $L^2=2$
\begin{itemize}
\item[(3)] and $L.D=4$, then $V$ is an irreducible reduced quadric in $\bP^3$ and $L$ the class of a hyperplane section; 
\item[(4)] and $L.D=2$, then $V$ is a degree $2$ del Pezzo with $L\sim D\sim -K_V$; 
\item[(5)] and $L.D=0$ and $D^2=-1$, then $(V,D)$ is the resolution of a rational surface which has a unique non-ADE singularity, which is either a simple elliptic singularity of type $\Ee$ or a a cusp singularity of type $T_{2,3,r}$ (with $7\le r\le 16$);
\item[(6)] and $L.D=0$ and $D^2=-2$, then $(V,D)$ is the resolution of a rational surface which has a unique non-ADE singularity, which is either a simple elliptic singularity of type $\Es$ or a cusp singularity of type $T_{2,q,r}$ (with $q\ge 4$, $r\ge 5$, $q+r\le 19$).
\end{itemize}
\end{proposition}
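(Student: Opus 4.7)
The proof is a case analysis driven by Lemma~\ref{lbound1}, which forces $L.D \equiv L^2 \pmod{2}$ and $0 \le L.D \le L^2+2$. With $L^2 \in \{1,2\}$, the possible pairs $(L^2, L.D)$ are $(1,1)$, $(1,3)$, $(2,0)$, $(2,2)$, $(2,4)$, and the plan is to extract each case of the proposition from one of these alternatives, with $(2,0)$ splitting further according to the value of $D^2$.

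For $L.D = L^2+2$ I would invoke Proposition~\ref{propclassify} directly. When $L^2 = 1$ the Veronese alternative $L = 2\ell$ is numerically excluded and no rational normal scroll carries a class of self-intersection $1$, so $V \cong \bP^2$ and $L = \ell$; this is case~(1). When $L^2 = 2$ the equation $n+2k = 2$ on a scroll $\bF_n$ with $L = \sigma + (n+k)f$ admits the two solutions $(n,k) \in \{(0,1),(2,0)\}$, which realize $V$ as a smooth quadric $\bP^1 \times \bP^1 \subset \bP^3$ or as the quadric cone obtained from $\bF_2$ by contracting $\sigma$; both are irreducible reduced quadrics in $\bP^3$, giving case~(3). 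The cases $L.D = L^2$ follow at once from Corollary~\ref{cordp}: $V$ is a del Pezzo surface of degree $L^2$ and $L \sim D \sim -K_V$, producing cases~(2) and~(4).

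It remains to treat $L^2 = 2$, $L.D = 0$. Lemma~\ref{lbound2} applies (since $L.D \le L^2$) and gives $-2 \le D^2 \le 0$; since $L \not\sim D$ (the self-intersections disagree) the right inequality is strict, so $D^2 \in \{-1,-2\}$. Because $D$ is anticanonical and negative definite, its contraction produces a normal rational surface $V'$ in which $D$ collapses to a single non--Du Val point. If $D$ is smooth (Type~II), this point is a simple elliptic singularity whose exceptional curve has self-intersection $-1$ or $-2$, identifying it as $\Ee$ or $\Es$ respectively. If $D$ is a cycle of rationals (Type~III), the image is a cusp singularity of type $T_{p,q,r}$, and the constraint $p=2$ together with the stated ranges of $r$ (respectively $q,r$) must be read off.

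The main obstacle is this last identification. It requires combining two pieces of information: the rank identity $\rho(V) = 10 - D^2 \in \{11,12\}$, which bounds the number of components of the cycle $D$ once one accounts for $L$ lying in $\Pic(V)$ independently of the cycle components $D_i$; and the explicit continued-fraction description of the self-intersection sequence of the resolution cycle of $T_{2,q,r}$, which determines the admissible lengths and hence the admissible values of $q,r$. Matching these two constraints against $L^2 = 2$, $L.D = 0$, and $L$ big and nef is a combinatorial bookkeeping exercise, but it is the only step where substantial input beyond the numerical lemmas of the previous subsection is required.
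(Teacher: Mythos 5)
Your treatment of cases (1)--(4) is correct and is essentially the paper's argument: Lemma~\ref{lbound1} fixes the possible values of $L.D$, Corollary~\ref{cordp} gives the two del Pezzo cases, and Proposition~\ref{propclassify} gives $\bP^2$ and the quadrics (your parenthetical claim that no scroll carries a class of self-intersection $1$ is not quite right --- $\bF_1$ with $\sigma+f$ does --- but relative minimality, or the minimal-degree classification, disposes of $\bF_1$, so the conclusion of case~(1) stands). Likewise the reduction of cases (5)--(6) to $D^2\in\{-1,-2\}$ via Lemma~\ref{lbound2}, and the identification $\Ee$/$\Es$ when $D$ is smooth, match the paper.

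The gap is in the cusp half of cases (5)--(6), and it is twofold. First, you assert that contracting a negative-definite anticanonical cycle yields ``a cusp singularity of type $T_{p,q,r}$'' and that $p=2$ ``must be read off''; but a general cusp singularity is \emph{not} of type $T_{p,q,r}$ (those are exactly the hypersurface cusps), so this is precisely the non-numerical input the statement needs: one must know that a minimally elliptic singularity whose cycle satisfies $D^2\in\{-1,-2\}$ is a double point $z^2=f(x,y)$, whence of type $T_{2,q,r}$ (Laufer/Karras; this is the ``standard'' fact the paper invokes, and nothing in your rank-plus-continued-fraction plan supplies it). Second, the bounds $7\le r\le 16$ and $q+r\le 19$ are exactly what you leave as an unexecuted ``combinatorial bookkeeping exercise.'' The paper gets them in one line by a different mechanism: the charge of a cusp lying on a rational surface is at most $21$ (\cite[Lem.~4.6]{fmiranda}), and the charge attached to $T_{p,q,r}$ is $p+q+r$, giving $q+r\le 19$ and $r\le 16$ directly. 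Your alternative route --- bounding $r(D)$ by $\rho(V)-1=9-D^2$ using the independence of $L$ and the $D_i$, then translating the cycle length of $T_{2,q,r}$ into bounds on $(q,r)$ --- is plausible and would likely reproduce the same inequalities, but it hinges on the explicit resolution cycles of $T_{2,3,r}$ and $T_{2,q,r}$, which you neither state nor prove; as written, the classification in (5)--(6), including the ranges of $q$ and $r$, is announced rather than established.
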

\begin{proof}
The possible values for $L.D$ and $D^2$ are determined by the Lemmas \ref{lbound1} and \ref{lbound2}. The first four items follow from Corollary \ref{cordp} and Proposition \ref{propclassify}. The statement about the type of singularities for cases (5) and (6) is standard. Finally, for the bounds on $q$ and $r$, we note that  the charge associated to a cusp lying on a rational surface is at most $21$ (cf. \cite[Lem. 4.6]{fmiranda}). For $T_{p,q,r}$ singularities the associated charge is $q(V,D)=p+q+r$. Thus, $q+r\le 19$ (or $r\le 16$).\end{proof}

\begin{remark}
A precise analysis of the cusp singularities $T_{2,q,r}$ occurring in degree $2$ can be done using \cite[\S5]{wall} and \cite[\S6]{wall} for $T_{2,3,r}$ and $T_{2,q,r}$ respectively. 
\end{remark}

In the case of Type II degenerations, one might have to consider elliptic ruled components as $0$-surfaces. Here, we note that, at least for degree $2$, the elliptic ruled components  can be viewed as degenerations of rational anticanonical surfaces. 

\begin{lemma}\label{partialsmooth}
Let $(V,D',D'';L)$ be a polarized anticanonical triple (i.e. $V$ is elliptic ruled and $D'$, $D''$ are sections with $D'+D''\in|-K_V|$). Assume $V$ is relatively minimal and $L^2\in\{1,2\}$. Furthermore, assume $L$ has no fixed common component with $D'\cup D''$. Then one of the components, say $D''$, satisfies $-L^2\le (D'')^2<0$ and $D''.L=0$ and thus it can be contracted to a $\Er$  ($r\in\{7,8\}$) singularity at some point $p$ on a normal surface $\overline V$. Then, there is a partial smoothing $(\mathcal V,\calD,\calL)$ of $p$ such that the central fiber is $(\overline{V},D',L)$ and the general fiber $(V_t,D_t;L_t)$ is a polarized rational anticanonical surface. 
\end{lemma}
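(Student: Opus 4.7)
The plan is to first pin down the numerical structure of $(V,D',D'';L)$ on the elliptic ruled surface, use this to identify a contractible elliptic section, and then construct the partial smoothing by deforming the resulting simple elliptic singularity on an anticanonical Gorenstein del Pezzo.

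\emph{Numerical classification of the triple.} I would work in the basis $\Pic(V)=\bZ D''\oplus\bZ f$ with $D''$ a section of minimal self-intersection $-e\le 0$ and $f$ the fiber class; the disjointness $D'.D''=0$ together with $D'+D''\sim -K_V$ gives $D'\sim D''+ef$. Writing $L=aD''+bf$, direct computation yields
$$L^2=a(2b-ae),\quad L.f=a,\quad L.D''=b-ae,\quad L.D'=b,$$
together with the identity $L^2=L.f\cdot L.(D'+D'')$. Combining $L^2\in\{1,2\}$, nefness, bigness (which forces $a\ge 1$), and the no-fixed-component assumption, a short case analysis forces $a=1$ and $b=e$, so that $L.D''=0$, $L.D'=L^2$, and $(D'')^2=-L^2\in\{-1,-2\}$, verifying the bound $-L^2\le (D'')^2<0$.

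\emph{Contraction to a singular del Pezzo.} Since $L.D''=0$ and $(D'')^2<0$, a suitable multiple $|nL|$ induces a birational morphism $\rho:V\to\overline V$ onto a normal Gorenstein projective surface, contracting precisely $D''$ to a point $p$. Because $D''$ is a smooth elliptic curve of self-intersection $-1$ or $-2$, the singularity $p$ is simple elliptic of type $\Er$ with $r\in\{7,8\}$, matching the two values of $L^2$. Pushing forward gives $\overline D':=\rho_* D'$ and $\overline L:=\rho_* L$, and numerically $\overline L^2=(\overline D')^2=\overline L.\overline D'=L^2$, so by Hodge index (compare Corollary \ref{cordp}) one has $\overline L\sim\overline D'\sim -K_{\overline V}$; thus $(\overline V,\overline D';\overline L)$ is a singular anticanonical del Pezzo pair of degree $L^2$ with a single non-Du Val singularity at $p$, fitting into case (2) (for $L^2=1$) or case (4) (for $L^2=2$) of Proposition \ref{classifydeg2}.

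\emph{Construction of the partial smoothing.} By Pinkham's theorem, simple elliptic singularities of degree $\le 9$ are smoothable, covering both $\Es$ and $\Ee$. I would globalize not via abstract obstruction calculus but by realizing $\overline V$ inside a standard family of anticanonical Gorenstein del Pezzos: for $L^2=2$, $\overline V$ is a double cover of $\bP^2$ branched along a plane quartic with an $E_7$ branch singularity, and smoothing the branch quartic gives a one-parameter deformation $\mathcal{V}\to\Delta$ with central fiber $\overline V$ and general fiber a smooth degree $2$ del Pezzo; for $L^2=1$, one uses double covers of the quadric cone in $\bP^3$ branched along a sextic with an $E_8$ branch singularity. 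Since $\mathcal V$ is Gorenstein, $\calL:=\omega_{\mathcal V/\Delta}^{-1}$ is Cartier, relatively ample, and restricts to $\overline L$ on the central fiber. By Kodaira--Mumford vanishing $H^1(V_t,L_t)=0$ for every $t$, so cohomology and base change shows $\pi_*\calL$ is locally free of rank $L^2+1$, and the section cutting out $\overline D'$ lifts to a section of $\pi_*\calL$ over $\Delta$, defining the required relative divisor $\calD$.

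\emph{Main obstacle.} The delicate point is the globalization step: checking that the branch-divisor smoothing in the double-cover model really induces the expected smoothing of the elliptic singularity (not a partial resolution) and that the extending polarization $\calL$ and section $\calD$ are compatible with it. A secondary nuisance is the degenerate product case $V\cong E\times\bP^1$ with $L=D''+f$, where my numerical analysis fails to single out a contractible section; here $|L|$ contains an entire fiber as a fixed component, and one must argue (either from the no-fixed-component hypothesis read on the ruling, or from the incompatibility of such a triple with arising as a $0$-surface in a Kulikov Type II degeneration of degree two) that this configuration is excluded from the hypotheses of the lemma.
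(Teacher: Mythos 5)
Your approach is genuinely different from the paper's: the paper does not attempt an intrinsic numerical classification, but instead cites Thompson (Lem.\ 4.6 and pp.\ 23--24 of \cite{thompson}) for the explicit, finite list of polarized triples $(V,D',D'';L)$ that actually occur as components of degree two degenerations, and then checks the contraction and exhibits the partial smoothing case by case by explicit double-cover equations (e.g.\ deforming the branch sextic $x_0^2f_4$ in the non-unigonal $\Es$ case). Your contraction step and your double-cover smoothing are in the same spirit as that explicit check and are fine in outline. The problem is your first step, and it is not the ``secondary nuisance'' you flagged: the numerical case analysis does \emph{not} force $L.D''=0$. Take $V=\bP(\calO_E\oplus\calL_0)$ with $\calL_0$ a \emph{nontrivial} degree-zero line bundle, $D',D''$ the two disjoint square-zero sections (one checks $D'+D''\in|-K_V|$ exactly), and $L=\calO_V(D'')\otimes p^*\lambda$ with $\deg\lambda=1$. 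Then $L$ is nef and big with $L^2=2$, there are no $(-1)$-curves at all (so relative minimality is vacuous), and $|L|$ is a pencil with two simple base points whose general member is an irreducible section of square $2$ --- so it has \emph{no} fixed component whatsoever, and your proposed fix of ``reading the no-fixed-component hypothesis on the ruling'' only touches the split case $E\times\bP^1$. Yet $L.D'=L.D''=1$ and $(D')^2=(D'')^2=0$: no section can be contracted, and the conclusion fails for this triple.

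Consequently the statement cannot be proved from the hypotheses as you (and, literally, the lemma) state them; the missing input is precisely the degeneration-theoretic one that the paper imports from Thompson, namely that such a triple does not occur as a polarized $0$-surface of a degree two Type II Kulikov model. (For instance, in an actual degree two degeneration the rational end component adjacent along $D'$ would have to carry $L_W^2=0$ and $L_W.D_W=L.D'=1$, contradicting the parity $L_W.D_W\equiv L_W^2 \bmod 2$ of Lemma \ref{lbound1}; but this argument uses the ambient degeneration, not the triple alone.) Your second suggested escape route --- ``incompatibility with arising as a $0$-surface'' --- is therefore the right idea, but you neither prove it nor restrict it to the correct case, and without it your claim that the case analysis forces $a=1$, $b=e$ is simply false at $e=0$. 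A smaller gap of the same nature: your rank-two basis for $\operatorname{Num}(V)$ presupposes $V$ geometrically ruled, whereas relative minimality here only says that $(-1)$-curves meet $L$ positively; this part is likely repairable (vertical $(-1)$-curves with $L$-degree zero can be contracted first, and one can analyze the remaining ones by hand), but it is not addressed. To close the proof along your lines you must either add the hypothesis that $(V,D',D'';L)$ arises in a degree two degeneration and exploit it (e.g.\ via the parity/matching argument above or via Thompson's list, as the paper does), or strengthen the numerical hypotheses so as to exclude the $e=0$, $L\equiv D''+f$ type.
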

\begin{proof}
By \cite[Lemma 4.6]{thompson} and \cite[p. 23-24]{thompson}, we have a precise control on the surfaces $(V,D',D'';L)$  that can occur.  The claim can be checked explicitly.  For example, in the case $\Es$ non-unigonal: $V$ is a double cover of $\bP^2$ branched along the sextic $x_0^2f_4(x_1,x_2)$, a partial smoothing is given by $V(z^2-x_0^2F_4(x_0,x_1,t\cdot x_2))\to\bA^1_t$ for some polynomial $F_4$ with $F_4(x_0,x_1,0)=f_4(x_0,x_1)$. 
\end{proof}

\section{A GIT construction for the moduli for pairs}\label{sectvgit}
In Section \ref{sectksba} we have shown that the moduli of degree $2$ $K3$ pairs has a geometric compactification $\overline\calP_2$. While a rough classification of the degenerate degree $2$ pairs is given by Proposition \ref{classifydeg2}, a full classification of the geometric objects parameterized by the boundary of  $\overline\calP_2$ seems  difficult to obtain by direct considerations. Instead, we study $\overline \calP_2$ by using a related GIT space $\widehat \calP_2$. 

Namely,   $\widehat \calP_2$ is constructed by enhancing the GIT analysis of Shah (giving $\widehat \calM$) to take into account a hyperplane section. This construction is closely related to that of \cite{raduthesis}. The main point here is that there is a choice of linearization involved in the construction of a 
GIT quotient for pairs, giving in fact a family of quotients $\widehat \calP_2(\alpha)$ for $\alpha\in \bQ_+$. As a limiting case, $\widehat \calP_2(0)$ is still defined and  $\widehat \calP_2(0)\cong \widehat \calM$. By general considerations from VGIT, one gets a natural forgetful map $\widehat\calP_2(\epsilon)\to \widehat \calP_2(0)\cong \widehat \calM$ (for $0<\epsilon\ll 1$), which is generically a $\bP^2$-bundle. We define $\widehat \calP_2:=\widehat\calP_2(\epsilon)$ and note that (since  $\epsilon\ll 1$) the stability conditions for  $\widehat \calP_2$ are essentially determined by Shah's stability conditions for $\widehat \calM$.  However, in $\widehat \calP_2$ more orbits are separated than in $\widehat \calM$. Finally, using Theorem \ref{thmshah2}, we conclude  that $\widehat \calP_2$ is closely related to the KSBA compactification $\overline \calP_2$. We summarize the results of the section as follows:

 \begin{theorem}\label{thmgitpair}
 The GIT quotient $\widehat \calP_2$ (constructed in this section) compactifies the moduli space of degree $2$ pairs $\calP_2$ and has the following properties: 
 \begin{itemize}
 \item[i)] $\widehat \calP_2$ has a natural forgetful map $\widehat \calP_2\to \widehat \calM$ (with generic fiber $\bP^2$);
 \item[ii)] the (GIT) stable locus $\calP_2^s\subset \widehat \calP_2$ is a moduli space of KSBA stable degree $2$ pairs $(X,H)$ such that $X$ is double cover of $\bP^2$ or $\Sigma^0_4$ (and thus $\calP_2^s$ is a common open subset of both $\widehat \calP_2$ and $\overline \calP_2$); 
 \item[iii)] the strictly semistable locus  $\widehat \calP_2\setminus \calP_2^s$ is a surface $\widetilde Z_1$ that maps one-to-one to the closure of the stratum $\widehat Z_1\subset \widehat \calM$.
 \end{itemize}
 \end{theorem}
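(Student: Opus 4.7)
The plan is to construct $\widehat{\calP}_2$ as a GIT quotient on a parameter space for pairs (sextic, line) that enhances Shah's construction. On the non-unigonal chart I would consider
\[
\bP H^0(\bP^2,\calO(6)) \times \bP H^0(\bP^2,\calO(1))
\]
with the diagonal $\SL(3)$-action and the one-parameter family of linearizations $\calO(1)\boxtimes \calO(\alpha)$, $\alpha \in \bQ_{\ge 0}$. At $\alpha = 0$ the second factor plays no role, so the quotient is Shah's $\overline{\calM}$; the analogous Kirwan blow-up of $\omega$ (enhanced to record a conic on $\Sigma_4^0$ in the unigonal case) then gives $\widehat{\calP}_2(0)\cong \widehat{\calM}$. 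Setting $\widehat{\calP}_2 := \widehat{\calP}_2(\epsilon)$ for $0<\epsilon \ll 1$, the forgetful map $\widehat{\calP}_2 \to \widehat{\calM}$ of~(i) is the standard VGIT wall-crossing morphism (Thaddeus, Dolgachev--Hu), which is projective and birational. Over the ADE locus $\calM \subset \widehat{\calM}$ the stabilizer of a sextic is finite, so the fiber of the forgetful map is $\bP H^0(\bP^2,\calO(1)) \cong \bP^2$ modulo a finite group, giving the generic $\bP^2$-fibration claim in~(i).

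\textbf{Identification of stable pairs with KSBA stable pairs (item ii).} For $\epsilon \ll 1$ and a representative $(C,\ell)$ with closed $\SL(3)$-orbit, the Hilbert--Mumford criterion yields that $(C,\ell)$ is stable precisely when $C$ is GIT semistable (with orbit not specializing to the triple conic) and $\ell$ satisfies an open condition with respect to the $1$-parameter subgroups destabilizing $C$. The first condition is equivalent, by Proposition~\ref{propslc}, to the double cover $X\to \bP^2$ being Gorenstein and slc with $\omega_X\cong \calO_X$. The second condition translates, under the pullback $H = \pi^*\ell$, into $H$ avoiding the log canonical centers of $X$ (double curves, triple points, and degenerate elliptic/cusp singularities), which is precisely the slc condition on $(X,\epsilon H)$ in Definition~\ref{defstable}; ampleness of $H$ is automatic since $\ell$ is ample on $\bP^2$. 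The unigonal chart is handled analogously via the double cover of $\Sigma_4^0$ from Theorem~\ref{thmshah2}. Together these identify $\calP_2^s$ as a common open subset of $\widehat{\calP}_2$ and $\overline{\calP}_2$.

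\textbf{Strictly semistable locus and main obstacle (item iii).} I would go stratum-by-stratum through Shah's classification (Theorem~\ref{thmshah1}), using the stabilizer information recalled in the remark following it. For strata whose closed-orbit representatives carry a $\Gm$-stabilizer that acts nontrivially on $\bP H^0(\bP^2,\calO(1))$ (strata $Z_2, Z_3, \tau$, together with the toric-stabilizer points on the Kirwan exceptional divisors), a generic line breaks the $\Gm$-stabilizer to a finite group and the pair becomes stable; only lines in the $\Gm$-fixed locus---finitely many after quotienting by residual symmetries---remain strictly semistable and are collapsed by the forgetful map. The stratum $Z_4$ is already stable. The exceptional case is $Z_1$ (the $A_{17}$ case, whose KSBA limit is $X=V_1\cup_E V_2$ with $V_i\cong \bP^2$): here the $\Gm$-orbit closures of lines collapse to a single GIT point over each fixed $C$, so that \emph{every} choice of $\ell$ leaves $(C,\ell)$ strictly semistable, producing a $2$-dimensional residual locus $\widetilde{Z}_1$ with bijective image $\overline{Z}_1\subset \widehat{\calM}$. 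The main technical obstacle is this careful Hilbert--Mumford bookkeeping: for each closed orbit in Shah's classification one must determine exactly which lines yield closed $\SL(3)$-orbits in the perturbed problem and verify the delicate claim that only $Z_1$ produces a full-dimensional strictly semistable contribution. Shah's explicit tables, together with the fact that nearly all stabilizers are toric, make the computation feasible though combinatorially involved.
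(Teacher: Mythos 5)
Your VGIT setup and the treatment of item (ii) follow the paper's route (the family of linearizations $\calO(1,\alpha)$ on $\bP H^0(\bP^2,\calO(6))\times\check\bP^2$, with the unigonal locus handled separately via the blow-up at $\omega$; the paper actually builds the unigonal chart from a Luna slice, as the quotient $\bigl(\textrm{Bl}_0(\Sym^{12}V\times\Sym^8V)\times\bP\Sym^2V\bigr)\gquot\SL(2)$, and glues it to $\calP_2^{nu}$, but your sketch is in the same spirit). The genuine problem is your analysis of the strictly semistable locus in item (iii). You claim that over the stratum $Z_1$ ``every choice of $\ell$ leaves $(C,\ell)$ strictly semistable,'' because the $\Gm$-orbit closures of lines collapse. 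This is false, and it contradicts the paper's Proposition \ref{proppairz1}: for $C$ a minimal orbit in $Z_1$ (three conics of the pencil, with two $\Ee$ points $p_1,p_2$), the pair $(C,L)$ is $\epsilon$-stable whenever $L$ avoids both $p_i$, strictly semistable exactly when $L$ passes through a $p_i$ but is not the distinguished tangent direction $L_0$ (the minimal orbit being $L=V(x_1)$, the line through both $p_i$), and unstable when $L=L_0$. The flaw in your reasoning is that the $\Gm$-limits of a generic line are the special tangent lines $V(x_0)$, $V(x_2)$, and the resulting limit pairs are \emph{unstable}, hence lie outside the semistable locus; so they do not obstruct closedness of the orbit of $(C,\ell)$ in $X^{ss}$, and the generic pair really does become stable after the perturbation. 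Your version would also wreck item (ii) and the rest of the paper: the GIT-stable pairs over $Z_1$ with $L$ avoiding the $\Ee$ points are precisely the KSBA-stable pairs whose surfaces have $\widetilde E_8$ (or $T_{2,3,r}$) singularities, i.e.\ the $12$-dimensional boundary component (case 5 of Table \ref{table1}); if all pairs over $Z_1$ were strictly semistable these would be missing from $\calP_2^s$, and the flip of Theorem \ref{thmflip} (which replaces only the surface $\widetilde Z_1$ of pairs whose polarizing divisor hits the $\Ee$ points) would have nothing to flip. That you still land on a two-dimensional $\widetilde Z_1$ mapping bijectively to $\widehat Z_1$ is an accident of the wrong mechanism, and it does not identify which geometric pairs $\widetilde Z_1$ parameterizes.

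Two smaller inaccuracies in the same paragraph: $Z_3$ (like $Z_4$) consists of properly stable sextics with finite stabilizers, so it does not belong on your list of $\Gm$-stabilized strata; and over $Z_2$ the $\Gm$-fixed lines do not ``remain strictly semistable'' --- by Proposition \ref{proppairz2} they give \emph{unstable} pairs (the double line itself, or a line through the multiplicity-$4$ point), while the only strictly semistable configurations over $\overline Z_2$ have minimal orbit $\bigl(V(x_0^2x_2^2(x_0x_2-x_1^2)),V(x_1)\bigr)$, which already lies over the closure of $\widehat Z_1$. Finally, to get the statement that $\widetilde Z_1$ maps one-to-one onto the \emph{closure} of $\widehat Z_1$ you must also carry out the strictly semistable analysis on the unigonal chart (Corollary \ref{corunigonal}(2): $B$ a minimal orbit with the hyperplane section through the two $\Ee$ points or the two tangency points), covering $U_1\cup\{\xi\}$; your proposal does not address this.
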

 
 The actual construction of $\widehat \calP_2$ and the analysis of the stability conditions is the content of the section (see esp. \eqref{defnu} and \eqref{defuni} for the construction, and \ref{cornonunigonal} and \ref{corunigonal} for the analysis of stability) after the introductory example discussed in \S\ref{sectexample}.  

\subsection{A motivating example}\label{sectexample} 
We start by discussing a simple example that  illustrates how the compactification procedure described in Section \ref{sectksba} works and also hints to the relevance of GIT/VGIT to the construction of $\overline \calP_2$. Specifically, we consider  the analogous $1$-dimensional compactification problem: the moduli space of pairs consisting of an elliptic curves $E$ and a divisor $D$ of degree $d$. The definition \ref{defstable} can be easily adapted to this situation. The resulting analogue of $\overline{\calP}_d$ is precisely the moduli space of weighted stable curves $\calM_{1,\calA}$ (or more precisely $\calM_{1,\calA}/\Sigma_d$ in the notation of loc. cit.)
of  Hassett \cite{hassettweighted} for the weight system  $\calA=(\epsilon,\epsilon,\dots,\epsilon)$.  Furthermore, for small $\epsilon$, there is a natural forgetful map $\calM_{1,\calA}\to \overline{\calM}_1$, where $\overline{\calM}_1\cong \bP^1$ is the compactified $j$-line. The boundary points in $\calM_{1,\calA}$ (corresponding to the fiber over $\infty\in \overline{\calM}_1$) are easily described: they are cycles $C$ of rational curves such that each component contains at least one point of $D$ (this is the ampleness condition of Def. \ref{defstable}); the points of $D$ are allowed to coincide, but they should be distinct from the nodes of $C$ (this is the slc condition of Def. \ref{defstable}). 

When $d=3$, the moduli of pairs as above can be constructed via GIT. Namely, an elliptic curve with a degree $3$ polarization is a plane cubic $C$. If one considers instead an elliptic curve with a polarizing divisor, one gets a pair $(C,L)$ consisting of a plane cubic and a line. A GIT quotient for such pairs (i.e. plane curves plus a line) was studied in \cite{raduthesis}. Namely, we have a one-parameter VGIT situation: the GIT quotient for pairs is $\calP(\alpha)=\bP H^0(\bP^2,\calO(3))\times \check\bP^2\gquot_{\calO(1,\alpha)}\SL(3)$ for $\alpha\in \bQ_{\ge 0}$. Then, $\calP(\epsilon)\cong \calM_{1,(\epsilon,\epsilon,\epsilon)}$ (for $0<\epsilon\ll1$) and $\calP(0)\cong \overline{\calM}_1\cong \bP^1$ (the GIT quotient for plane cubics).  Furthermore, by VGIT there is a natural forgetful morphism $\calP(\epsilon)\to \calP(0)$, which coincides with $\calM_{1,(\epsilon,\epsilon,\epsilon)}\to \overline{\calM}_1$ from the previous paragraph.

There are two advantages to using the GIT construction. First, the spaces $\calP(\epsilon)\cong \calM_{1,(\epsilon,\epsilon,\epsilon)}$  and $\calP(0)\cong \overline{\calM}_1$, and the forgetful morphism $\calP(\epsilon)\to \calP(0)$ are automatically projective  (the same can be shown without GIT, but with more involved arguments). Also, the GIT description makes clear the difference between polarization and polarizing divisor. Namely,  the GIT quotient $\calP(0)\cong \bP^1$ has weak modular meaning: over $\bA^1$ the quotient is modular (each point corresponding to a unique smooth cubic), but over $\infty$ three different orbits (the nodal cubic, the conic plus a line, and the triangle) are collapsed to the minimal orbit corresponding to   the triangle in $\bP^2$ (with $(\bC^*)^2$ stabilizer). When one considers $\calP(\epsilon)$, i.e. pairs $(C,L)$ with the line given weight $\epsilon$, essentially nothing changes over the stable locus $\bA^1\subset \calP(0)$ (resulting in a $\bP^2$-fibration), but over $\infty$ the three collapsing orbits are separated. 
The point is that a nodal cubic is strictly semi-stable, but when considered together with a line it becomes either stable (if the line does not pass through the node) or unstable (if the line passes through the node). Thus, we get $\calP(\epsilon)$ is modular, in contrast to the weakly modular space $\calP(0)$. Also, it is easy to see that (up to finite stabilizer) $\calP(\epsilon)\to \calP(0)$ becomes a $\bP^2$-fibration even over $\infty$. 

\begin{remark}
Note that $\calP(0)$ parameterizes nodal cubics (analogue to the slc condition from Thm. \ref{thmshah2}), and thus the only failure of the modularity is the non-separateness at the boundary. Also, note that the limit procedure for a nodal cubic plus a line as the line approaches the node is to replace the nodal cubic by a conic plus a line (and then by a triangle); this illustrates one of the essential points of the proof of Thm. \ref{thmksba}. Finally, some general connections between GIT stability and KSBA stability was noticed by Kim--Lee \cite{kimlee} and Hacking \cite[\S10]{hacking} (essentially appropriate KSBA stability implies GIT stability). This connection is the strongest for Calabi--Yau hypersurfaces. In some sense, this is what  makes the example discussed in this section and the degree $2$ $K3$ case work.
\end{remark}

\subsection{GIT for sextic pairs} The goal of the section is to construct a GIT moduli space $\widehat \calP_2$ for degree $2$ pairs together with a forgetful map $\widehat \calP_2\to \widehat \calM$.  Following Shah \cite{shah}, we do this construction in two steps. First in this subsection, we handle the non-unigonal case: we obtain an open subset $\calP_2^{nu}\subset \widehat \calP_2$ and a forgetful map $\calP_2^{nu}\to \overline \calM\setminus \{\omega\}$. Then, working near $\omega$ and invoking Luna type slice results, we obtain a neighborhood $U$ of the unigonal divisor. The gluing of $\calP_2^{nu}$ and $U$ gives $\widehat \calP_2$ together with a forgetful morphism $\widehat \calP_2\to \widehat \calM$.

The construction of $\calP_2^{nu}$ follows the example discussed in \S\ref{sectexample} (and \cite{raduthesis}). Simply, we consider the family of GIT quotients associated to pairs $(C,L)$, where $C$ is a plane sextic and $L$ is a line:
$$\calP(\alpha):=\left(\bP H^0(\bP^2,\calO(6))\times\check\bP^2\right)\gquot_\alpha \SL(3).$$
As before, we have $\calP(0)\cong \overline{\calM}$ (the GIT quotient for plane sextics described in Thm. \ref{thmshah1}) and a forgetful map  $\pi:\calP(\epsilon)\to  \overline \calM\cong \calP(0)$ (generically, a $\bP^2$-fibration) for small $\epsilon$.  We define
\begin{equation}\label{defnu}
\calP_2^{nu}=\pi^{-1}\left(\overline \calM\setminus\{\omega\}\right)\subset \calP(\epsilon),
\end{equation}
i.e. we remove from $\calP(\epsilon)$ the pairs $(C,L)$ with $C$ degenerating to the triple conic. 

\begin{notation}
In what follows, we will denote by $(C,L)$ a pair of a plane sextic and a line and by $(X,H)$ the double cover associated to it (not necessarily normal). We will use $(C,L)$ and $(X,H)$ interchangeably to refer to points of $\calP_2^{nu}$ (and  $\widehat \calP_2$). 
\end{notation}

From the general VGIT theory, it follows that if $C$ is a stable/unstable sextic, then $(C,L)$ is $\epsilon$-stable/$\epsilon$-unstable. We conclude that the pairs $(X,H)$ consisting of a $K3$ surface $X$ (possibly with ADE singularities) and an arbitrary degree $2$ ample (Cartier) divisor $H$ are stable in $\widehat \calP_2$ (N.B. strictly speaking this applies to $\calP_2^{nu}$ here, but the unigonal case is similar). Thus, {\it $\widehat \calP_2$ is a compactification of the moduli $\calP_2$ of degree $2$ $K3$ pairs}.  Also from Proposition \ref{propslc} (and then Theorem \ref{thmshah2}) the semi-stable pairs $(X,H)$ corresponding to points of $\calP_2^{nu}$ (and then similarly for $\widehat \calP_2$) have the property that $X$ is a double cover of $\bP^2$ (and later also $\Sigma_4^0$) with at worst slc singularities. What remains to be understood is how the strictly semi-stable orbits of sextics become separated when considered as pairs, and the connection between $\epsilon$-GIT stability and $\epsilon$-KSBA stability. In fact, as already mentioned in a previous remark, $\epsilon$-KSBA stability  always implies $\epsilon$-GIT stability.

We  discuss the stability of pairs based on the stratification of $\overline\calM$ given by Theorem  \ref{thmshah1}. First note that the strata $Z_4$ (double cubic) and $Z_3$ (double conic plus another transversal conic) parameterize stable sextics. Thus the corresponding pairs in these cases are stable, and the forgetful map $\pi: \calP^{nu}\to \overline \calM\setminus\{\omega\}$ is a $\bP^2$-fibration (up to finite stabilizers) in a neighborhood of $Z_3\cup Z_4$. Furthermore, the pairs $(X,H)$ parameterized by $\pi^{-1}(Z_3\cup Z_4)$ are easily seen to be $\epsilon$-KSBA stable (see \S\ref{sectz4} and \S\ref{sectz3} for a discussion of the geometry of $(X,H)$ in these cases).

To handle the strictly semistable locus ($\overline Z_1\cup \overline Z_2$) we note the following:  if $C$ is semistable, there exists $1$-PS $\lambda$ adapted to $C$ (i.e. $\mu(C,\lambda)=0$), which then  singles out a  (possibly partial) flag $p_\lambda\in L_\lambda\subset \bP^2$. This flag is always in a special position with respect to $C$; typically $p_\lambda$ is a singular point of $C$ and $L_\lambda$ is a line of highest multiplicity in the tangent cone at $p_\lambda$. The stability of the pair $(C,L)$ (for $C$ strictly semistable) is typically determined by the position of $L$ with respect to the flag $p_\lambda\in L_\lambda$. In particular, we note
\begin{itemize}
\item[(a)] if $p_\lambda\not\in L$ for all $\lambda$ with $\mu(C,\lambda)=0$ (i.e. $L$ is generic), then $(C,L)$ is $\epsilon$-stable;
\item[(b)] if $L=L_\lambda$ (i.e. $L$ is very special), then $(C,L)$ is $\epsilon$-unstable.
\end{itemize}
While these two rules allow us to determine the stability/unstability of most pairs $(C,L)$, there are additional possibilities that might lead to pairs $(C,L)$ that are $\alpha$-strictly semi-stable for $\alpha$ varying in an interval.

The behavior of stability conditions for pairs over the strictly semistable locus  $\overline Z_1\cup \overline Z_2$ is analyzed by the following two propositions:
\begin{proposition}\label{proppairz2}
Assume that $C$ corresponds to a semi-stable orbit mapping to $\overline{Z}_2\subset \overline{\calM}$. Thus 
\begin{itemize}
\item[i)] either $C$ has a singularity at a point $p$ of multiplicity $4$, in which case if $p\in L$, then  $(C,L)$ is $\epsilon$-unstable;
\item[ii)] or $C$ contains a double line $L_0$, in which case if $L=L_0$ then 
$(C,L)$ is $\epsilon$-unstable. If $C$ contains a double line $L_0$ which is also tangent to the residual quartic in  a point $p$ and $p\in L$, but $L\neq L_0$, then $(C,L)$ is $\epsilon$-semistable with associated minimal orbit: 
$$\left(V(x_0^2x_2^2(x_0x_2-x_1^2)), V(x_1)\right).$$
\end{itemize}
If $(C,L)$ is not one of the three degenerate cases from above, then $(C,L)$ is $\epsilon$-stable.  Furthermore, in this case, the associated double cover is $\epsilon$-KSBA stable. 
\end{proposition}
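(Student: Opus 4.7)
The proof is a Hilbert--Mumford calculation for the VGIT problem defined by the linearization $\mathcal{O}(1,\epsilon)$ on $\bP H^0(\bP^2,\calO(6))\times \check{\bP}^2$. The plan is to use the numerical criterion $\mu_\epsilon((C,L),\lambda) = \mu(C,\lambda) + \epsilon\,\mu(L,\lambda)$, and observe that since the sextics in $\overline Z_2$ are strictly semi-stable (by Thm.~\ref{thmshah1}), the stability of the pair is entirely controlled, to leading order in $\epsilon$, by the signs of $\mu(L,\lambda)$ as $\lambda$ ranges over the 1-PS adapted to $C$ (i.e.\ those with $\mu(C,\lambda)=0$). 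By Shah's analysis \cite[Thm.~2.4]{shah}, such adapted 1-PS come in two flavors matching the two cases of the proposition: for case (i) one can diagonalize coordinates so that $p=(1{:}0{:}0)$ and $\lambda = \mathrm{diag}(t^2,t^{-1},t^{-1})$ (the weight-4 point destabilizer), and for case (ii) one can arrange $L_0 = V(x_2)$ and $\lambda = \mathrm{diag}(t,t,t^{-2})$ (the double-line destabilizer).

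Once those adapted 1-PS are in hand, the unstable claims reduce to a direct computation. In case (i) with $p\in L$ one may write $L=V(ax_1+bx_2)$, giving $\mu(L,\lambda) = -1 < 0$ and hence $\mu_\epsilon((C,L),\lambda)<0$. In case (ii) with $L=L_0=V(x_2)$ we get $\mu(L,\lambda)=-2<0$. In both cases the pair is $\epsilon$-unstable. Conversely, for the generic $(C,L)$ in $\pi^{-1}(\overline Z_2)$ not covered by the degenerate subcases, a case-by-case check of all adapted $\lambda$ attached to the normal forms of semi-stable sextics of Shah shows that $\mu(L,\lambda)>0$ for every such $\lambda$, so $\mu_\epsilon((C,L),\lambda)>0$ and the pair is $\epsilon$-stable (one also verifies that no non-adapted $\lambda$ can destabilize since then $\mu(C,\lambda)>0$ dominates the $\epsilon$ correction).

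For the borderline subcase in (ii)---double line $L_0$ tangent at $p$ to the residual quartic, with $p\in L$ but $L\neq L_0$---we still have $\mu(C,\lambda)=0$, and in suitable coordinates $p=(0{:}0{:}1)$, $L_0=V(x_2)$, one finds a second 1-PS $\lambda' = \mathrm{diag}(t^{-1},t^2,t^{-1})$ adapted to the tangency configuration for which $\mu(L,\lambda')=0$ precisely when $p\in L$. Thus $(C,L)$ is $\epsilon$-semistable but not stable. Sliding $(C,L)$ along the one-parameter subgroup $\lambda'$ (respectively $\lambda$) and taking the limit produces the flat degeneration to the pair $\bigl(V(x_0^2x_2^2(x_0x_2-x_1^2)),V(x_1)\bigr)$, which is visibly fixed by the diagonal torus and so represents the unique minimal orbit in the $S$-equivalence class.

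Finally, $\epsilon$-KSBA stability of the double cover in the non-degenerate cases follows by combining Proposition~\ref{propslc} with the remarks after Definition~\ref{defstable}. Since the GIT semistable sextics under consideration have orbit closures not containing the triple conic, Proposition~\ref{propslc} gives that $X$ is slc and Gorenstein with $\omega_X\cong\calO_X$, so condition~(0) and the slc part of~(1) hold. The pullback divisor $H\subset X$ is Cartier and ample because $L$ is, giving~(2), while the deformation to a generic smooth sextic plus generic line gives~(3). The only remaining point is that $H$ must avoid the log canonical centers of $X$, which (since the non-ADE singularities of $X$ correspond to the quadruple point of case~(i) and the double line of case~(ii)) is exactly the condition $p\notin L$ and $L\neq L_0$ that we assumed. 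The main obstacle is the bookkeeping of adapted 1-PS for all boundary orbits listed in Shah's table; however, because the stratum $\overline Z_2$ is one-dimensional and its minimal orbits have almost abelian stabilizer (Rem.\ after Thm.~\ref{thmshah1}), the set of adapted $\lambda$ is essentially a single torus in each case and the computation is tractable.
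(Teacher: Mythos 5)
Your overall strategy is the one the paper uses: the numerical function $\mu^\epsilon((C,L),\lambda)=\mu(C,\lambda)+\epsilon\,\mu(L,\lambda)$, the adapted one-parameter subgroups of weights $(2,-1,-1)$ for the quadruple point and $(1,1,-2)$ for the double line, the observation that non-adapted $\lambda$ cannot destabilize for $0<\epsilon\ll 1$, and KSBA stability in the non-degenerate cases via Proposition \ref{propslc} together with the requirement that $H$ avoid the log canonical centers. Those parts, including the values $\mu(L,\lambda)=-1$ and $-2$ in the two unstable subcases, agree with the paper's computation.

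The treatment of the borderline subcase (double line $L_0$ tangent to the residual quartic at $p$, with $p\in L$, $L\neq L_0$) has concrete errors. First, your coordinates are inconsistent: $p$ is the tangency point of $L_0$ with the residual quartic, so $p\in L_0$, which is incompatible with $p=(0{:}0{:}1)$ and $L_0=V(x_2)$. Second, the $1$-PS $\lambda'$ of weights $(-1,2,-1)$ does not do what you claim: for a line through $(0{:}0{:}1)$ the coefficient weights are $\{1,-2\}$, so $\mu(L,\lambda')$ is never $0$, and you never check $\mu(C,\lambda')=0$ either. The correct torus here is the one of weights $(1,0,-1)$ (exactly as in Proposition \ref{proppairz1}, which is how the paper handles this subcase): with $p=(1{:}0{:}0)$, $L_0=V(x_2)$ one has $C=x_2^2(\alpha x_0^3x_2+\beta x_0^2x_1^2+\cdots)$, $\mu(C,\lambda)=0$, $\mu(L,\lambda)=0$ for any line through $p$ other than $L_0$, and the weight-zero limit is precisely $x_0^2x_2^2(x_0x_2-x_1^2)$ together with $V(x_1)$. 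Third, that limit is \emph{not} fixed by the full diagonal torus (the monomials $x_0^3x_2^3$ and $x_0^2x_1^2x_2^2$ have different weights unless the middle weight vanishes); it is fixed only by the subtorus of weights $(1,0,-1)$, and in any case being torus-fixed does not by itself show the orbit is closed in the semistable locus, so the identification of the minimal orbit needs the same kind of argument as in Proposition \ref{proppairz1} rather than the "visibly fixed by the diagonal torus" shortcut.
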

\begin{proof} The Mumford numerical function for pairs is $\mu^\epsilon((C,L),\lambda)=\mu(C,\lambda)+\epsilon \mu(L,\lambda)$ (see also \cite[Sect. 2]{raduthesis}). In the first case, by considering the $1$-PS $\lambda$ with weights $(2,-1,-1)$, we get $\mu(C,\lambda)=0$ and then $\mu(L,\lambda)=-1$ if $p\in L$; thus, an unstable pair by the numerical criterion. Similarly, the second case follows by considering the $1$-PS of weights $(1,1,-2)$. The case of a double line tangent to a quartic is similar to Proposition \ref{proppairz1} below. Finally, if neither of these three degeneracy conditions are satisfied, we are in the situation (a) discussed above (i.e. generic line from the GIT point of view) and $(C,L)$ will be stable. For the geometric analysis of these cases see \S\ref{sectz2}. 
\end{proof}

\begin{proposition}\label{proppairz1}
Assume that $C$ corresponds to a minimal orbit of type $\overline Z_1\setminus \{\zeta,\omega\}$. Then, $C$ has one or two singular points $p$ of the following type: $p$ is a triple point with tangent consisting of a triple line $L_0$, and the singularity at $p$ is either $\Ee$ or $T_{2,3,r}$ ($r\ge 7$) or degenerate cusp of type $x^2(x+y^2)$ (i.e. double conic tangent to the residual conic). The following hold:
\begin{itemize}
\item[i)] if $p\not \in L$ (for both $p$  if  two special singularities), then $(C,L)$ is $\epsilon$-stable;
\item[ii)] $L=L_0$ (i.e. $L$ is the special direction through $p$), then $(C,L)$ is $\epsilon$-unstable;
\item[iii)] otherwise (i.e. $L$ passes through $p$, but it is not special), $(C,L)$ is $\epsilon$-semi-stable.   The minimal orbits in this case are given by 
\begin{itemize}
\item[(Case $Z_1$)] $\left(V((x_0x_2-a_1x_1^2)(x_0x_2-a_2x_1^2)(x_0x_2-a_3x_1^2)), V(x_1)\right)$, for  $a_i$ distinct; 
\item[(Case $\tau$)] $\left(V((x_0x_2-x_1^2)(x_0x_2-ax_1^2),V(x_1)\right)$, for $a\neq 1$.
\end{itemize} 
\end{itemize}
Furthermore, the double cover $(X,H)$ is $\epsilon$-KSBA stable iff $(C,L)$ is $\epsilon$-GIT stable. 
\end{proposition}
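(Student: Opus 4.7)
The plan is to follow the same template as Proposition~\ref{proppairz2}, applying the Mumford numerical criterion to $\mu^\epsilon((C,L),\lambda)=\mu(C,\lambda)+\epsilon\,\mu(L,\lambda)$, and then compare the GIT conclusion with KSBA singularities case by case. By Shah's classification (Theorem~\ref{thmshah1}, cf.\ Cases II(1) and III(1) of \cite[Thm.~2.4]{shah}), after an $\SL(3)$-change of coordinates the distinguished triple point may be placed at $p=(1{:}0{:}0)$ with triple tangent $L_0=V(x_2)$, and the adapted 1-PS is $\lambda(t)=\mathrm{diag}(t,1,t^{-1})$; it satisfies $\mu(C,\lambda)=0$ and sends $C$ in the limit $t\to 0$ to the minimal orbit representative of either $Z_1$ or $\tau$ listed in~(iii). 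If $C$ has a second triple point, it lies at $(0{:}0{:}1)$ and the analogous 1-PS is conjugate to $\lambda$ by the swap $(x_0,x_2)\mapsto(x_2,x_0)$.

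The key computation is $\mu(L,\lambda)$ for $L=V(\alpha_0 x_0+\alpha_1 x_1+\alpha_2 x_2)$. The action of $\lambda$ on $\check\bP^2$ has weights $(-1,0,1)$, so $\mu(L,\lambda)=-\min\{w(\alpha_i):\alpha_i\ne 0\}$ equals $1$ if $p\notin L$ (i.e.\ $\alpha_0\ne 0$), equals $0$ if $p\in L$ but $L\ne L_0$ (i.e.\ $\alpha_0=0,\alpha_1\ne 0$), and equals $-1$ if $L=L_0$. This immediately settles (ii), since then $\mu^\epsilon((C,L),\lambda)=-\epsilon<0$ gives $\epsilon$-instability. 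For~(i), $\mu^\epsilon((C,L),\lambda)=\epsilon>0$ for the adapted $\lambda$ (and its swap-conjugate if needed, using $p'\notin L$); for any other 1-PS $\lambda'$ not conjugate to these, Shah's classification forces $\mu(C,\lambda')>0$ strictly, so $\mu^\epsilon((C,L),\lambda')>0$ for $0<\epsilon\ll 1$, proving $\epsilon$-stability. For~(iii), $\mu^\epsilon((C,L),\lambda)=0$ gives $\epsilon$-semistability, and the same check on the remaining 1-PS rules out instability.

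To identify the minimal orbit in (iii), take the limit of $(C,L)$ under $\lambda(t)$ as $t\to 0$: by Shah, $C$ limits to the standard minimal sextic $\prod_{i=1}^3(x_0x_2-a_ix_1^2)$ (Case $Z_1$) or $(x_0x_2-x_1^2)^2(x_0x_2-ax_1^2)$ (Case $\tau$), while the line $(0,\alpha_1,\alpha_2)\mapsto(0,\alpha_1,t\alpha_2)$ limits to $V(x_1)$. The resulting pair is stabilized by $\lambda$ (and by the $(x_0,x_2)$-swap up to finite index), so its orbit is closed in the $\epsilon$-semistable locus, giving the claimed minimal orbit representatives.

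For the KSBA comparison, recall from \S\ref{sectshah} that the triple point $p$ of $C$ corresponds precisely to the non-ADE (and non-nc) singular point of the double cover $X$, which is a simple elliptic $\widetilde E_r$, a cusp $T_{2,3,r}$, or a degenerate cusp, and is the unique log canonical center of $X$. Since $H=\pi^*L$ meets this center iff $p\in L$, the pair $(X,\epsilon H)$ is slc iff $L$ avoids every triple point of $C$, which is exactly case~(i); cases~(ii) and~(iii) produce a divisor through the log canonical center, hence not KSBA stable. Thus the GIT and KSBA stability conditions coincide. The main obstacle in carrying this out is the second half of Step~2: verifying that no 1-PS other than the adapted $\lambda$ (and its swap-conjugate) can destabilize in the $\epsilon\to 0$ limit. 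This requires careful use of Shah's full list of minimal orbits to ensure $\mu(C,\lambda')>0$ strictly for all non-adapted $\lambda'$, since the strictly semistable contribution $\epsilon\,\mu(L,\lambda')$ is otherwise uncontrolled.
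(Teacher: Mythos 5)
Your proposal is correct and follows essentially the same route as the paper: the Mumford numerical function for pairs $\mu^\epsilon=\mu(C,\lambda)+\epsilon\mu(L,\lambda)$ evaluated on the adapted $1$-PS of weights $(1,0,-1)$ (with the conjugate one for a second triple point), the position of $L$ relative to the flag $p\in L_0$ determining $\mu(L,\lambda)\in\{1,0,-1\}$, the $\lambda$-limit identifying the minimal orbits in case (iii), and the observation that $L$ passing through $p$ (whether or not $L=L_0$) makes $(X,\epsilon H)$ non-slc, which gives the equivalence of $\epsilon$-GIT and $\epsilon$-KSBA stability. The extra care you flag about ruling out destabilization by non-adapted $1$-PS's is handled in the paper by the general VGIT rules (a)/(b) stated before the proposition, which is the same finiteness-of-states argument you sketch.
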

\begin{proof}
In this situation, the adapted $1$-PS $\lambda$ has weights $(1,0,-1)$. If $L$ passes through $p$, but not in a special direction, $\mu(L,\lambda)=0$, and the configuration remains semi-stable for all $0<\epsilon\ll 1$. Finally, to prove the equivalence of the two stability conditions it suffices to note that if $L$ passes through $p$ (regardless of $L=L_0$ or not)  the associated pair $(X,\epsilon H)$ is not slc (see also \S\ref{sectz1}). 
\end{proof}

The above discussion gives the non-unigonal case of Theorem \ref{thmgitpair}: 
\begin{corollary}\label{cornonunigonal}
Let $(C,L)$ be a pair consisting of a plane sextic and a line. Let $(X,H)$ be the associated double cover. Assume that 
\begin{itemize}
\item[i)] $(C,L)$ is $\epsilon$-GIT stable,
\item[ii)] and the orbit closure of  $C$ does not contain the triple conic.  
\end{itemize}
Then $(X,H)$ is $\epsilon$-MMP stable. Conversely, assume that $(X,H)$ is $\epsilon$-KSBA stable  and that $H$ is base-point-free. Then $(X,H)$ is the double cover associated to a pair $(C,L)$ satisfying the two conditions from above. 
\end{corollary}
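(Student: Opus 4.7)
The plan is to obtain this corollary as a direct bookkeeping assembly of Propositions \ref{propslc}, \ref{proppairz1}, and \ref{proppairz2} along the Shah stratification of $\overline \calM$ from Theorem \ref{thmshah1}. No new Mumford-function computations should be needed, since those have been carried out stratum by stratum in the cited propositions; only the stable strata require a short separate check.

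For the forward direction, assume $(C,L)$ is $\epsilon$-GIT stable with the orbit closure of $C$ not containing the triple conic. Since $\epsilon$-GIT stability of the pair forces $C$ itself to be GIT semistable, assumption (ii) together with Proposition \ref{propslc} yields that $X$ is slc, Gorenstein, with $\omega_X \cong \calO_X$. The remaining task is to verify that $(X,\epsilon H)$ is slc, i.e.\ that $H$ avoids every log canonical center of $X$. I would do this stratum by stratum on $\overline \calM$. Over the ADE locus $\calM$ the surface $X$ is klt, so the condition is vacuous. Over the stable strata $Z_3$ and $Z_4$ only a short list of minimal orbits of $C$ occurs, and a direct geometric check (to be carried out in \S\ref{sectz3} and \S\ref{sectz4}) shows that for every line $L$ the pullback $H$ meets the non-normal locus of $X$ properly, hence avoids all lc centers. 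Over the strictly semistable strata $\overline Z_1$ and $\overline Z_2$, Propositions \ref{proppairz1} and \ref{proppairz2} explicitly match $\epsilon$-GIT stability of $(C,L)$ with $\epsilon$-KSBA stability of $(X,H)$ orbit by orbit, so the claim follows by quoting those results.

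For the converse, assume $(X,H)$ is $\epsilon$-KSBA stable with $H$ base-point-free. By Remark \ref{remdef} one has $h^0(X,\calO_X(H))=3$, so $|H|$ realizes $X$ as a double cover $\pi\colon X \to \bP^2$ branched along a plane sextic $C$, with $H = \pi^* L$ for a unique line $L \subset \bP^2$. Since $X$ is slc, Proposition \ref{propslc} immediately yields (ii) and that $C$ is GIT semistable. To conclude (i), I would argue by contradiction using the explicit orbit lists: if $(C,L)$ were $\epsilon$-unstable or strictly $\epsilon$-semistable and the orbit of $C$ meets $\overline Z_1 \cup \overline Z_2$, then Propositions \ref{proppairz1}(ii,iii) and \ref{proppairz2}(i,ii) force $L$ to pass through the distinguished flag point of $C$ sitting on a non-ADE singularity; inspecting the normal forms of the resulting double cover $(X,H)$ shows that $H$ then contains a log canonical center of $X$, contradicting the slc hypothesis on $(X,\epsilon H)$. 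On the stable strata the corresponding verification is again the short check mentioned above.

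The main obstacle I anticipate is purely organizational, namely enumerating the finite list of orbits over $Z_3\cup Z_4$ and explicitly confirming that every line $L$ is admissible there; the substantive numerical analysis is entirely contained in the cited propositions, so no further GIT calculation should be needed.
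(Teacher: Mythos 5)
Your forward direction follows the same route the paper takes (the corollary is stated there as a summary of the preceding discussion: the VGIT comparison with Shah's quotient, Proposition \ref{propslc}, the stable strata $Z_3,Z_4$, and Propositions \ref{proppairz2}, \ref{proppairz1}); your converse, however, replaces the paper's appeal to the general principle that $\epsilon$-KSBA (log canonical) stability implies $\epsilon$-GIT stability (Kim--Lee, Hacking, invoked explicitly right before the corollary) by a contrapositive orbit-by-orbit analysis. That substitution is where the gap lies: Proposition \ref{proppairz1} is stated only for $C$ a \emph{minimal} orbit of type $\overline Z_1\setminus\{\zeta,\omega\}$, so quoting it does not cover (a) semistable sextics with non-closed orbit lying over $\widehat Z_1$ --- e.g.\ a reduced sextic with a single $\widetilde E_8$ or $T_{2,3,r}$ singularity, which are precisely the pairs populating boundary component $5$ of Table \ref{table1} --- nor (b) any configuration lying over the stratum $\zeta$ (minimal orbit $(x_0x_1x_2)^2$ with $2$-torus stabilizer), which Proposition \ref{proppairz1} explicitly excludes and over which the Type III pairs of component $1$ sit. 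The same coverage problem affects your forward direction, since there too you claim the matching of stabilities over $\overline Z_1$ ``follows by quoting those results.'' To close it you must either run the adapted one-parameter-subgroup argument in general --- if $C$ is semistable and $\mu^\epsilon((C,L),\lambda)\le 0$ for some $\lambda$, then for $0<\epsilon\ll 1$ necessarily $\mu(C,\lambda)=0$, and one must show that the resulting flag $p_\lambda\in L_\lambda$ always sits at a non-ADE singularity (equivalently over a log canonical center of $X$), including at the torus point $\zeta$ where there is a two-parameter family of adapted $\lambda$'s --- or simply invoke, as the paper does, the general implication ``$\epsilon$-KSBA stable $\Rightarrow$ $\epsilon$-GIT stable.''

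Two smaller points you should make explicit. First, in the converse you use that a base-point-free ample $H$ with $h^0=3$ on a Gorenstein slc surface with $\omega_X\cong\calO_X$ presents $X$ as a double cover of $\bP^2$ branched in a sextic with $H=\pi^*L$: this needs finiteness of the morphism (ampleness plus freeness), flatness (finite over $\bP^2$ with $X$ Cohen--Macaulay), and the cyclic-cover structure determining the sextic, possibly non-reduced when $X$ is non-normal; the identification $|H|=\pi^*|\calO_{\bP^2}(1)|$ then gives the unique line $L$. Second, KSBA stability in Definition \ref{defstable} also includes the smoothability condition (3); in the forward direction this is satisfied by smoothing the branch sextic together with the line, and should at least be mentioned.
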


\subsection{Blow-up of the triple conic locus}
Shah's construction of $\widehat \calM$ replaces the degenerations to the triple conic ($\omega\in\overline \calM$) by double covers of $\Sigma_4^0\subset \bP^5$, the cone over the rational normal curve  of degree $4$. Additionally, as noted in Theorem \ref{thmshah2}, all the semistable points in the GIT quotient $\widehat \calM$ correspond to degenerations of $K3$ surfaces with slc singularities. We now consider pairs consisting of such a surface (semistable double cover of $\bP^2$ or $\Sigma_4^0$) together with a hyperplane section. As explained, the goal here is to construct a neighborhood $U$ of the unigonal divisor which can be glued to $\calP_2^{nu}$  to give $\widehat \calP_2$.

To start, we note the following uniform description of the double covers of $\bP^2$ and $\Sigma_4^0$: they are complete intersections of the form
\begin{equation}\label{equniform}
\{z^2 - f_6(x_i, y) = f_2(x_i, y) =0 \} \subset \bP(1, 1, 1, 2, 3).
\end{equation}
The non-unigonal case corresponds to the situation 
$f_2(0, 0, 0, 1) \neq 0$. After a change of coordinates, one can normalize the equation in this case to: 
$$\{z^2 - f_6(x_i, y) = y =0 \} \subset \bP(1, 1, 1, 2, 3),$$
which leads to the usual case of double covers of $\bP^2$ branched along a sextic. Similarly, the double covers of $\Sigma_4^0$ correspond to the case when $f_2(0,0,0,1)=0$ and $f_2(x_1,x_2,x_3,0)$ has maximal rank.
Thus, one can choose the normal form:
$$\{z^2 - f_6(x_i, y) = x_0x_2 -x_1^2 =0 \} \subset \bP(1, 1, 1, 2, 3).$$
Moreover, as $f_6(0, 0, 0, 1) \neq0$ (i.e. the ramification curve does not pass through the vertex of $\Sigma_4^0$), we can further assume that $f_6(x_i, y) = y^3+y g_4(x_i) + g_{6}(x_i)$. 

Due to the fact that the automorphism group of the weighted projective space $\bP(1, 1, 1, 2, 3)$ is not reductive, a uniform GIT description would be difficult (see \cite[Sect. 4]{shah}). Instead, Shah \cite[Sect. 5]{shah} uses a local description of the GIT quotient $\overline \calM$ near the orbit $\omega$ of the triple conic and a gluing construction. The following is just a rephrasing of the main point of the  construction of Shah.
\begin{lemma}
Locally near $\omega$, $\overline \calM$ is identified with the (affine) quotient 
$$\left(\Sym^{12}V\times\Sym^8V\right)/\SL(2),$$
where $V$ is the standard $\SL(2)$ representation.  The Kirwan blow-up $\widehat \calM\to \overline \calM$ is modeled on the weighted blow-up of the origin in the vector space $\Sym^{12}V\times\Sym^8V$. In particular, the exceptional (unigonal) divisor  is identified with the GIT quotient 
$$\left(\bP \Sym^{12}V\times\bP\Sym^{8}V\right)\gquot_{\calO(3,2)} \SL(2).$$
\end{lemma}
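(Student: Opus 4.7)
The plan is to apply Luna's étale slice theorem to the $\SL(3)$-action on $\bP H^0(\bP^2,\calO(6))$ at the closed orbit $\omega$ of the triple conic $[c^3]$ with $c=x_0x_2-x_1^2$. First I would identify the stabilizer: the smooth conic $V(c)\subset\bP^2$ is preserved by the image of $\SL(2)\hookrightarrow\SL(3)$ coming from the second symmetric power representation on $V$ (identifying $\bP^2=\bP(\Sym^2 V)$ via the Veronese), and so up to a finite kernel this image is the stabilizer.

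Next I would decompose $H^0(\bP^2,\calO(6))=\Sym^6 U^*$ as an $\SL(2)$-representation using $U\cong\Sym^2 V$. A direct character calculation (or Hermite reciprocity) yields
$$\Sym^6\Sym^2 V\cong \Sym^{12}V\oplus\Sym^8 V\oplus\Sym^4 V\oplus\Sym^0 V.$$
The trivial summand is spanned by $c^3$ itself. The differential of the $\SL(3)$-action at $c^3$ gives a map $\mathfrak{sl}_3\to H^0(\calO(6))$ whose image is the orbit tangent space; decomposing $\mathfrak{sl}_3\cong\Sym^4 V\oplus\Sym^2 V$ as $\SL(2)$-module (with $\mathfrak{sl}_2=\Sym^2 V$ the stabilizer Lie algebra), I identify this image with the $\Sym^4 V$ summand. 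Modulo the trivial scaling direction, which disappears upon projectivization, the Luna slice at $\omega$ is thus the $\SL(2)$-representation $W:=\Sym^{12}V\oplus\Sym^8 V$, and the étale local model of $\overline\calM$ near $\omega$ is $W\gquot\SL(2)$, establishing the first claim.

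For the Kirwan blow-up, I would use that Kirwan's desingularization blows up the stratum with largest stabilizer, which under the Luna identification is the origin $0\in W$ (the unique $\SL(2)$-fixed point). The weights $(3,2)$ for the weighted blow-up arise from the natural grading on $W$ dictated by Shah's uniform normal form \eqref{equniform}: deformations along $\Sym^{12}V$ correspond to pure perturbations of the branch sextic (entering as a cube root of a deformation of $f_6$ restricted to $V(c)$, hence weight $3$), while deformations along $\Sym^8V$ couple linearly to a perturbation of $f_2$ of the conic (weight $2$). The exceptional divisor is then the weighted projectivization $\bP_{(3,2)}(W)$, and taking the induced GIT quotient produces the stated $(\bP\Sym^{12}V\times\bP\Sym^8V)\gquot_{\calO(3,2)}\SL(2)$.

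The main obstacle I anticipate is the precise identification of the Kirwan blow-up with a \emph{weighted} (rather than ordinary) blow-up, together with the determination of the weights $(3,2)$. An ordinary Kirwan blow-up would use the plain projectivization of $W$ with a single linearization, but here one must match two different ``degrees'' of deformation compatibly with the ambient weighted projective space $\bP(1,1,1,2,3)$ of \eqref{equniform}. This requires a careful analysis of the $\Gm$-scaling under which $f_2\mapsto\lambda^2 f_2$ and $f_6\mapsto\lambda^3 f_6$ descend to $W$ and produce exactly the linearization $\calO(3,2)$ on $\bP\Sym^{12}V\times\bP\Sym^8V$.
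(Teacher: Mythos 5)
Your proposal is correct and follows essentially the same route as the paper: a Luna slice at the triple conic, identification of the stabilizer with $\SL(2)$ via the Veronese (so $W\cong\Sym^2V$), and the plethysm $\Sym^6(\Sym^2V)\cong\Sym^{12}V\oplus\Sym^8V\oplus\Sym^4V\oplus\Sym^0V$ with the last two summands absorbed by the orbit directions and the scaling. Your derivation of the weights $(3,2)$ from the scaling of Shah's normal form is exactly the paper's alternative remark identifying $g_4,g_6$ with binary forms $p_8,p_{12}$; note only that the $\Sym^8V$-direction is the coefficient $g_4$ of $y$ in $f_6$ (equivalently the part of the sextic of the form $c\cdot q_4$, i.e. divisible once by the conic), not a perturbation of $f_2$ itself.
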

\begin{proof}
 By definition, $\overline\calM\cong \bP\Sym^6W\gquot \SL(3)$ where $W$ is a standard representation of $\SL(3)$.  Luna's slice theorem  describe $\overline \calM$ locally at $\omega$ as the quotient of a normal slice to the orbit of a triple conic by the stabilizer $\SL(2)$. Since the conic is $\bP^1$ embedded by Veronese in $\bP^2$, we get an identification of $W=\Sym^2V$ as a $\SL(2)$-representation. Then the normal slice (as a $\SL(2)$-representation) is the summand $\Sym^{12} V\times \Sym^8V$ in $\Sym^6W\cong\Sym^6(\Sym^2V)$. The lemma follows. 

Alternatively, note  that in the normal form described above, the group preserving it is $\SL(2)$. Then, by viewing $x_i$ as sections of $\calO_{\bP^1}(2)$ we can identify $g_4(x_i)$ and $g_6(x_i)$ with binary forms $p_8(u, v)$ and $p_{12}(u, v)$ respectively.
 \end{proof}
 
 From the perspective of the lemma, we can view a line in $\bP^2$ (i.e. a section of the polarization) as an element of $\bP\Sym^2V$. We model the neighborhood $U$ of the unigonal divisor as the quotient 
\begin{equation}\label{defuni}
\left(\textrm{Bl}_0\left(\Sym^{12}V\times\Sym^8V\right)\times \bP\Sym^2V\right)\gquot\SL(2).\end{equation} 
The map $\widehat \calP_2\to \widehat \calM$ (locally near the unigonal divisor) is the induced map (at the level of quotients) by the first projection. By construction, $U$ glues to the non-unigonal quotient $\calP_2^{ns}$ to give $\widehat\calP_2$ together with a forgetful map $\widehat \calP_2\to \widehat \calM$. 
 
 To complete the proof of Theorem \ref{thmgitpair}, it remains to describe the stability conditions in the unigonal case. In other words, to study the quotient 
 $$\left(\bP \Sym^{12}V\times\bP\Sym^{8}V\times \bP\Sym^2V\right)\gquot_{\calO(3,2,\epsilon)} \SL(2).$$
To analyze this quotient in the geometric context of our paper, we rephrase a result of Shah (\cite[Theorem 4.3]{shah}) as follows:
\begin{theorem}[Shah]\label{thmunigonal}
With notation as above, let $B$ be a curve of $\Sigma_4^0$ given by $f_6(x_i,y)=y^3+y g_4(x_i) + g_{6}(x_i)$, and $X$ be the associated double cover of $\Sigma_4^0$. The following hold: 
\begin{itemize}
\item[(1)] If $B$ is stable and reduced, then $X$ has at most simple singularities.
\item[(2)] The minimal orbits of $B$ which are strictly semistable and reduced give surfaces $X$ with two $\Ee$ singularities. These minimal orbits  
 are parameterized by a (affine) rational curve $U_1\subset \widehat Z_1\setminus \widehat\tau$ (see Figure \ref{fig4}). 
\item[(3)] If $B$ is stable and non-reduced then $B=2C_1+C_2$ with $C_i$ rational normal curves that intersect transversely. These orbits are parameterized by a rational curve $U_2\subset \widehat Z_3\setminus \widehat\tau$.
\item[(4)] In addition to the cases given by (1), (2), (3), there is a single additional minimal orbit corresponding to the case $B=2C_1+C_2$ with $C_i$ rational normal curves that intersect tangentially at two points. This orbit maps to the point $\xi\in \widehat \tau$. 
\end{itemize}
Furthermore, the surfaces $X$ degenerating to  case (2), but not corresponding to minimal orbits, are rational with a unique $\widetilde E_8$ singularity. Similarly, the surfaces degenerating to case (4) have a singularity of type $T_{2,3,r}$ (for $r\ge 7$) or a degenerate cusp of type $z^2+x^2(x+y^2)$. 
\end{theorem}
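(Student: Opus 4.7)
The theorem is essentially a translation of Shah's stability analysis [shah, Thm.~4.3] into the language of our stratification of $\widehat{\calM}$, together with a local identification of the singularities of the double cover $X$. My proof would proceed in two stages: first, classify the minimal $\SL(2)$-orbits in the GIT problem on binary forms; second, for each such orbit translate the configuration into a branch curve $B$ on $\Sigma_4^0$ and read off the singularities of $X$.

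For the first stage, I would apply the Hilbert--Mumford numerical criterion. Since $\SL(2)$ has rank one, it suffices to test against the $1$-PS $\lambda(t)=\mathrm{diag}(t,t^{-1})$ up to conjugation. With the linearization $\calO(3,2)$, the Mumford weight decomposes as $\mu((p_{12},p_8),\lambda)=3\mu(p_{12},\lambda)+2\mu(p_8,\lambda)$. A straightforward weight count (paralleling the analyses of binary sextics and binary quartics) pins down the boundary of stability: strictly semistable configurations occur precisely when multiplicity-$6$ roots of $p_{12}$ and multiplicity-$4$ roots of $p_8$ occur in balance. This directly produces the list of minimal orbits, and the one-parameter families $U_1$ and $U_2$ arise from the residual moduli (e.g.\ cross-ratio-type invariants) once the distinguished configurations are fixed.

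For the second stage, I would translate each orbit into geometry on $\Sigma_4^0$, using that the $x_i$ are sections of $\calO_{\bP^1}(2)$, so roots of the binary forms correspond to rulings along which $B$ behaves specially. In case~(1), reducedness and stability force the singularities of $B$ on $\Sigma_4^0$ to be at worst ADE (the vertex of $\Sigma_4^0$ is avoided because $f_6(0,0,0,1)\ne 0$), which lifts to ADE singularities on $X$. In case~(2), the balanced strictly semistable configuration produces a branch curve with two distinguished smooth points of the ruling lattice meeting in a specific way; an explicit local analysis (pulling back the weighted projective equation near such a ruling) identifies the corresponding singularities on $X$ as two $\Ee$ simple elliptic singularities. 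Cases~(3) and~(4) handle $B=2C_1+C_2$: factoring $f_6=F^2 F'$ with $F,F'$ hyperplane sections of $\Sigma_4^0$, the distinction between transverse and doubly tangent intersection of $C_1,C_2$ falls out of the coincidence pattern of roots of $p_{12}$ and $p_8$.

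The hardest step is the final sentence: matching non-minimal orbits that degenerate into case~(2) to surfaces with a single $\Ee$ (rather than two), and those degenerating into case~(4) to the $T_{2,3,r}$ or degenerate cusp $z^2+x^2(x+y^2)$ list. I would realize the degeneration to the minimal orbit by an explicit $\Gm$-action and analyze the limit via Arnold's classification of bimodal and cuspidal singularities; the quasi-homogeneity of the relevant normal forms ensures that lower-order perturbations are captured by the log canonical threshold. Proposition~\ref{propslc} then guarantees that the only singularities appearing in this semistable locus are semi-log-canonical, which (combined with Proposition~\ref{classifydeg2} and the bounds on the charge of a cusp on a rational surface) restricts the possibilities to those in the claimed list. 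This completes the identification of the boundary geometry with the strata $U_1$, $U_2$, and the point $\xi$.
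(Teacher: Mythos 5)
Your overall architecture is reasonable, and your second stage for the minimal orbits (explicit normal forms, reading off the two $\Ee$ singularities in case (2), and the transverse versus doubly-tangent dichotomy for $B=2C_1+C_2$ in cases (3)--(4)) matches what the paper does. Note, however, that the paper simply quotes Shah's Theorem 4.3 for the orbit classification and only supplements the singularity identification; your plan to redo the Hilbert--Mumford analysis for the $\calO(3,2)$-linearization on $\bP\Sym^{12}V\times\bP\Sym^{8}V$ is a more self-contained route, but the phrase ``a straightforward weight count'' hides exactly the case analysis that constitutes Shah's theorem (including the equivalence in case (1) between stability of a reduced $B$ and $X$ having only ADE singularities), so that step is asserted rather than proved.

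The genuine gap is in your final paragraph, i.e.\ the ``furthermore'' clause. The tools you invoke --- slc-ness of semistable covers, log canonical thresholds of quasi-homogeneous normal forms, and the charge bound feeding into Proposition \ref{classifydeg2} --- only confine the singularities to the list consisting of $\Ee$, $T_{2,3,r}$ and degenerate cusps; they cannot separate the orbits degenerating to case (2) from those degenerating to case (4), and in particular cannot show that a non-minimal, strictly semistable, reduced $B$ yields a surface with exactly one $\Ee$ singularity rather than, say, a $T_{2,3,r}$ cusp. (Also, Proposition \ref{propslc} as stated concerns plane sextics; for double covers of $\Sigma_4^0$ you would need its unigonal analogue, which is what Theorem \ref{thmshah2} provides.) What is needed, and what the paper uses, is an adjacency/semicontinuity argument running from the minimal orbit outward: in a family degenerating to the case (2) minimal orbit $y^3+a_1yu^4v^4+a_2u^6v^6$, the singularities of a nearby fiber are deformations of the two $\Ee$ points of the central fiber, and a simple elliptic $\Ee$ singularity deforms only to ADE singularities (or persists). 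Since a strictly semistable reduced $B$ must carry at least one non-ADE singularity (this uses the converse direction of (1): reduced with only ADE singularities implies stable), exactly one $\Ee$ survives; two surviving $\Ee$'s would force the minimal-orbit normal form itself. Your lct argument points in the wrong direction --- it analyzes the limit rather than constraining the nearby orbits --- and as written it would not exclude cusps over case (2), contradicting the statement.
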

\begin{proof}
As mentioned, this is precisely  \cite[Thm 4.3]{shah}. We only comment here on the singularities of the semi-stable objects. In case (2), the minimal orbits are given by 
$f_6(x_i,y)=y^3+ a_1y u^4v^4+ a_2u^6v^6$
(via the identification of $x_i$ with binary quadrics as above). In affine coordinates $X$ is given by $z^2=y^3+a_1yu^4+a_2u^6$, which is an $\Ee$ singularity. Note that the discriminant condition to get an $\Ee$ singularity and not worse coincides with the non-degeneracy condition from \cite[Thm 4.3]{shah}. By semicontinuity, one gets the same type of singularities for non-minimal orbits degenerating to  case (2) (N.B. there has to be at least one non-ADE singularity, otherwise $B$ would stable by (1); $\Ee$ deforms only to ADE singularities).

In cases $(3)$ and $(4)$, $B$ has the form $f= (y+\theta)^2( y-2 \theta) = y^3- 3 y \theta^2- 2 \theta^3$, where $\theta= p_4(u, v)$.  $B$ is stable iff  two divisors defined by $y+ \theta=0$ and $y-2 \theta=0$ intersect transversely in four distinct points. Similarly, the minimal orbit for the strictly semistable case corresponds to two double points. The singularity claim follows (see esp. \cite[\S16.2.9]{agzv1}). 
\end{proof}

We now conclude the analysis of stability in the unigonal case:\begin{corollary}\label{corunigonal}
Let $(B, L)$ be a pair defined as above and $(X, H)$ be the associated double cover. Then
\begin{itemize}
\item[(1)] $(B, L)$ is $\epsilon$-GIT stable iff $L$ doesn't pass through the $\Ee$, $T_{2,3,r}$, or degenerate cusps singularities. If   $(B, L)$ is $\epsilon$-GIT stable then $(X,H)$ is $\epsilon$-KSBA stable. Conversely, if $(X,H)$ is $\epsilon$-KSBA stable and $X$ is a double cover of $\Sigma_4^0$, then $(X,H)$ is induced by a $\epsilon$-GIT stable $(B, L)$ pair.  
\item[(2)] The minimal orbits of  $\epsilon$-strictly semistable pairs $(B, L)$ 
are given by $B$ with minimal orbit (as in Thm. \ref{thmunigonal}) and $L$ such that it passes through the  two $\Ee$ singularities if $B$ is reduced or through the two tangent points otherwise. 
\end{itemize}
\end{corollary}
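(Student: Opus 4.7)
The plan is to parallel the arguments of Propositions \ref{proppairz1} and \ref{proppairz2}, applied now to the VGIT quotient of \eqref{defuni}, using the enhanced Mumford function $\mu^\epsilon((B,L),\lambda) = \mu(B,\lambda) + \epsilon\,\mu(L,\lambda)$. For $0 < \epsilon \ll 1$, the stability of $(B,L)$ coincides with that of $B$ whenever $B$ is $\SL(2)$-stable or unstable (cases (1), (3), (4) of Theorem \ref{thmunigonal}), so the real content is confined to the strictly semi-stable case (2) for $B$.

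First I would identify the adapted 1-PS in case (2): the diagonal $\lambda\subset\SL(2)$ with weights $(1,-1)$ on $(u,v)$ fixes Shah's minimal orbit $B = V(y^3 + a_1 y(uv)^4 + a_2(uv)^6)$, giving $\mu(B,\lambda) = 0$. The two $\Ee$ singularities of $X$ lie over the $\SL(2)$-fixed points $p_u = [1{:}0]$ and $p_v = [0{:}1]$; under the identification of $x_0,x_1,x_2$ with $u^2, uv, v^2$, a line $L \in \bP\Sym^2 V$ passes through $p_u$ (resp.\ $p_v$) iff the coefficient of $u^2$ (resp.\ $v^2$) vanishes. Computing $\mu(L,\lambda)$ and $\mu(L,\lambda^{-1})$ according to these coefficients, the Hilbert--Mumford criterion yields: $(B,L)$ is $\epsilon$-stable iff $L$ avoids both $p_u$ and $p_v$; $(B,L)$ is $\epsilon$-strictly semi-stable with closed orbit containing $(B, V(uv))$ iff $L$ passes through both; and $(B,L)$ is $\epsilon$-unstable if $L$ is a very special double line such as $V(u^2)$ or $V(v^2)$. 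This gives the minimal orbit description of part (2) in the reduced case; for the non-reduced case (4), the analysis is formally identical with the two tangency points of $C_1$ and $C_2$ in place of the two $\Ee$ points.

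The second step is to match $\epsilon$-GIT stability with $\epsilon$-KSBA stability. By Theorem \ref{thmunigonal}, the GIT semi-stable pairs already produce surfaces $X$ that are slc with unique non-ADE singularity of type $\Ee$, $T_{2,3,r}$, or degenerate cusp $z^2 + x^2(x+y^2)$. By Definition \ref{defstable} and the remark following it, $(X,\epsilon H)$ is slc iff the Cartier divisor $H$ misses every log canonical centre of $X$; for our normal Gorenstein slc $X$ these centres are exactly the non-ADE singular points (cf.\ \cite[Thm.~4.21]{ksb}), and ampleness of $H = \pi^*L$ is automatic. Combining with the numerical analysis above, $\epsilon$-GIT stability of $(B,L)$ is equivalent to $\epsilon$-KSBA stability of $(X,H)$, which proves both directions of (1); part (2) is then a direct read-off of the $\mu^\epsilon = 0$ locus crossed with the minimal orbits for $B$ from Theorem \ref{thmunigonal}.

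The main obstacle I anticipate is the boundary-of-the-boundary bookkeeping needed when $B$ is semi-stable but not minimal, so that $X$ acquires a $T_{2,3,r}$ singularity or a degenerate cusp rather than two $\Ee$'s: one must track the images of these singular points on $\Sigma_4^0$ under specialisation within the $\SL(2)$-orbit and verify that ``the distinguished points $L$ must avoid'' in each stratum still correspond to the two endpoints of the adapted 1-PS after degeneration. The related task of checking that the log canonical centre for the degenerate cusp $z^2 + x^2(x+y^2)$ reduces to an isolated point, so that the KSBA slc condition on $H$ really is a point-avoidance condition (rather than one involving a curve of log canonical centres), is where I expect the technical difficulty to concentrate.
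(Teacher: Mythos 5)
Your route is the same as the paper's: run the Hilbert--Mumford analysis for the $\SL(2)$-quotient \eqref{defuni} in parallel with Proposition \ref{proppairz1}, identify the two torus-fixed points of $\bP^1$ with the two $\Ee$ points of $X$, and then match GIT stability with KSBA stability through the log canonical centres. Your computation at the minimal orbits is essentially right and does give part (2), but your trichotomy contains an error: for $B$ a minimal strictly semistable orbit and $L$ passing through exactly one of the two distinguished points (with $L\neq V(u^2),V(v^2)$), the pair is strictly semistable, not unstable --- its torus-orbit closure contains both $(B,V(uv))$ and $(B,V(v^2))$, and the former is the closed semistable orbit while only the latter is unstable. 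This is the exact analogue of case (iii) of Proposition \ref{proppairz1}; your ``strictly semistable iff $L$ passes through both'' leaves these pairs unaccounted for. The corollary's statements survive (such pairs are in any case not stable, and the minimal orbits are still $(B,V(uv))$), but as written your Hilbert--Mumford bookkeeping is not correct.

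The genuine gap is in part (1). First, the converse direction (if $(X,H)$ is $\epsilon$-KSBA stable and $X$ is a double cover of $\Sigma_4^0$, then it is induced by a GIT stable pair) needs an input you never supply: that a GIT-\emph{unstable} branch curve $B$ on $\Sigma_4^0$ yields a double cover whose singularities are worse than simple elliptic, cusp, or degenerate cusp, hence a non-slc $X$ --- the $\Sigma_4^0$ analogue of Proposition \ref{propslc}, which is precisely the ``closer look'' the paper's proof invokes. Your second paragraph only argues from GIT semistable $B$ to slc $X$, so a priori a KSBA stable pair could still come from an unstable $(B,L)$, and the asserted equivalence does not prove the converse. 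Second, the first ``iff'' of (1) is a statement about \emph{all} semistable $B$, in particular the non-minimal strictly semistable ones, where $X$ has a single $\widetilde E_8$, a $T_{2,3,r}$, or a degenerate cusp; you explicitly defer this ``boundary-of-the-boundary bookkeeping'' as an anticipated obstacle rather than carrying it out, so the equivalence of $\epsilon$-GIT and $\epsilon$-KSBA stability is only established over the closed orbits and the GIT-stable $B$. Both missing steps are short (semicontinuity of the singularity type along the orbit closure as in Theorem \ref{thmunigonal}, together with the fact that the destabilizing flag specializes with the non-ADE point), but they are needed and absent.
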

\begin{proof}
The stability analysis is similar to that of Prop. \ref{proppairz1}. For the converse that  $\epsilon$-KSBA stable implies GIT stability, a closer look at the GIT stability conditions shows that unstable curves $B$ have singularities worse than simple elliptic, cusp, or degenerate cusp (compare Prop. \ref{propslc}).
\end{proof}

\section{Classification of slc stable pairs via GIT}\label{sectslclimit}
In Section \ref{sectreview} we have discussed the Shah's compactification $\widehat \calM$ which gives a compactification with weak geometric meaning for the moduli space $\calF_2$ of degree two $K3$ surfaces. In Section \ref{sectksba}, we have shown that considering pairs $(X,H)$ instead of polarized $K3$ surfaces $(X,\calO_X(H))$ gives a proper and separated moduli stack $\overline{\calP}_2$. Then, in section \ref{sectvgit}, via GIT, we have constructed an approximation $\widehat{\calP}_2$ of the space $\overline{\calP}_2$. Namely, there is a birational map $\widehat{\calP}_2\dashrightarrow \overline{\calP}_2$ which is an isomorphism over the stable locus $\calP_2^s\subset \widehat\calP_2$ (cf. Thm. \ref{thmgitpair}). We also recall that the strictly semistable locus $\widehat\calP_2\setminus \calP_2^s$ is a surface $\widetilde Z_1$ mapping  one-to-one to the stratum $\widehat Z_1\subset \widehat\calM$, and that the pairs parameterized (in the sense of GIT) by  $\widetilde Z_1$ are not KSBA stable.  To complete the description of $\overline{\calP}_2$, it remains to understand the KSBA replacement of the strictly semistable locus $\widetilde Z_1\subset\widehat{\calP}_2$. This is done in the following theorem.
\begin{theorem}\label{thmflip}
The birational map 
$$\overline{\calP}_2\dashrightarrow \widehat{\calP}_2$$
is a flip as in diagram \eqref{diagflip} that replaces the strictly semistable locus $\widetilde Z_1$ in $ \widehat{\calP}_2$ by locus of stable KSBA pairs $(X,H)$ of type $X=V_1\cup_EV_2$, where $V_i$  are degree $1$ del Pezzo surfaces or allowable degenerations of them (in $\bP(1,1,2,3)$) glued along an anticanonical section $E$ of $V_i$. Moreover, this transformation is compatible with the projection to $(\calD/\Gamma_2)^*$, and thus there is a natural forgetful map $\overline{\calP}_2\to (\calD/\Gamma_2)^*$.
\end{theorem}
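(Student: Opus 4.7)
\medskip
\noindent\textbf{Proof proposal.}
The plan is to analyze the pairs parameterized by $\widetilde Z_1\subset\widehat\calP_2$ which are GIT semistable but fail KSBA stability, run the semi-stable MMP with base change and twist (as in the proof of Theorem \ref{thmksba} and Remark \ref{rembasechange}) on an arbitrary one-parameter smoothing, and read off the resulting stable pair. By Proposition \ref{proppairz1}, a point of $\widetilde Z_1$ corresponds (up to the tangential degeneration giving $\widehat\tau$) to a pair $(C,L)$ with $C$ a union of three conics mutually tangent at two points $p_1,p_2$ and $L$ a generic line through one of the $p_i$. The associated double cover $X\to\bP^2$ is rational with a unique simple elliptic $\Ee$ singularity (or a cusp $T_{2,3,r}$ in degenerate cases), and $H$ passes through this singularity; hence $(X,\epsilon H)$ is not slc and $(X,H)$ is excluded from $\overline\calP_2$.

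To identify the KSBA replacement, choose a smoothing $(\sX^*,\calH^*)/\Delta^*$ of $(X,H)$ and first produce a Kulikov model $\sX/\Delta$ (after base change) in which the $\Ee$ (resp. $T_{2,3,r}$) singularity is resolved by introducing an elliptic (resp. anticanonical cycle) component. At this stage the divisor $\calH$ pulled back to the new model contains a component $V_i$ of $X_0$, so by Shepherd-Barron \cite[Thm.~1(i)]{sbpolarization} we must twist by $\calO(-V_i)$ to return $\calH$ to a relative Cartier divisor; after finitely many such base changes and twists the new central pair $(X_0',\calH_0')$ satisfies conditions (0)--(3) of Definition~\ref{defstable}. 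For the $\Ee$ case the stable limit is Type II with $X_0'=V_1\cup_E V_2$, and the invariants force $L_i^2=1$, $L_i.D_i=1$ on each $0$-surface $(V_i,D_i;L_i)$ with $D_i=E$; by Proposition~\ref{classifydeg2}(2) each $V_i$ is a degree $1$ del Pezzo (or an allowable degeneration thereof arising in $\bP(1,1,2,3)$, matching the allowable degenerations of degree $1$ del Pezzos), and the polarizing divisor $H_0'=H_1+H_2$ with $H_i\in|{-}K_{V_i}|$ is an anticanonical section on each side. The cusp degenerations of $\Ee$ fit into the same family as boundary degenerations of $(V_i,D_i)$ where $D_i$ becomes nodal. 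Since the KSBA limit is unique (Theorem~\ref{thmksba}), these pairs exhaust the replacement locus.

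Assembling this produces a flip diagram
$$
\xymatrix{
\overline\calP_2 \ar@{->}[dr]_{\phi^+}\ar@{-->}[rr] & & \widehat\calP_2 \ar@{->}[dl]^{\phi^-}\\
& Z &
}
$$
where both $\phi^{\pm}$ are small birational contractions with the appropriate $K$-signs: on the $\widehat\calP_2$ side the surface $\widetilde Z_1$ is covered by GIT test-configuration curves along which $K+\epsilon$(boundary) is $\phi^-$-negative, while on the $\overline\calP_2$ side the locus of pairs $V_1\cup_E V_2$ is (by the KSBA construction and log Kodaira vanishing as in Remark~\ref{remdef}) covered by curves along which the analogous log canonical class is $\phi^+$-positive. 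The space $Z$ is the common contraction: it is obtained from either side by collapsing the one-parameter family of lines $L$ through a fixed $p_i$ (respectively, the corresponding del Pezzo pairs identified up to twist). I would check separately that $\phi^\pm$ are isomorphisms away from $\widetilde Z_1$ and its KSBA replacement, using Theorem~\ref{thmgitpair}(ii) to get isomorphism over $\calP_2^s$ and the uniqueness of log canonical models for the single remaining flipping locus.

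Finally, compatibility with the Baily-Borel map follows because both $\widehat\calP_2\to\widehat\calM\to(\calD/\Gamma_2)^*$ (Theorem~\ref{thmlooijenga}) and the Hodge-theoretic period map on the KSBA side factor through the elliptic curve $E$: under the flip the $j$-invariant of $E$ is preserved (on the $\widehat\calP_2$ side it is the $j$-invariant of the three tangent conics, on the $\overline\calP_2$ side it is the $j$-invariant of the common anticanonical curve $E=V_1\cap V_2$, and both compute the same period). Hence the rational map $\overline\calP_2\dashrightarrow(\calD/\Gamma_2)^*$ extends across the flipped locus and yields the forgetful morphism. I expect the main obstacle to be the explicit verification of the twist calculation - tracking which component $V_i$ of the refined semistable model must be subtracted to keep $\calH$ Cartier and checking that the resulting $(V_i,D_i;L_i)$ is a polarized anticanonical pair in the sense of Section~\ref{sectpolanti} - and relatedly confirming that the degenerate cusp cases ($T_{2,3,r}$) do not produce additional boundary components but instead are recovered as specializations of the $E$ nodal degenerations in case (2) of Table~\ref{table1}.
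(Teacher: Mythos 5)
Your overall strategy (run stable reduction with base change and twist on families through the strictly semistable locus, then match fibrations over the $j$-line) is the right skeleton, but two essential pieces are wrong or missing. First, you misidentify the geometry of the points of $\widetilde Z_1$: by Proposition \ref{proppairz1}(iii) the minimal orbits are $(V((x_0x_2-a_1x_1^2)(x_0x_2-a_2x_1^2)(x_0x_2-a_3x_1^2)),V(x_1))$, i.e.\ three conics of a pencil pairwise tangent at the \emph{two} points $(1:0:0)$ and $(0:0:1)$, with $L=V(x_1)$ the line through \emph{both}; the associated double cover has \emph{two} $\Ee$ singularities and its resolution is a (non-minimal) elliptic ruled surface with two disjoint $(-1)$-sections, not a rational surface with a single $\Ee$ singularity. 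This is not cosmetic: the identification of the KSBA replacement in the paper (\S\ref{semistablereplace}) proceeds by resolving both $\Ee$ points in the family, producing the chain $X_0=V_1\cup_E V\cup_E V_2$, observing that the pulled-back polarization $F_0+E_1+E_2$ contains both double curves, twisting by $\calO(-V)$, and contracting the middle ($1$-surface) component $V$ — this is exactly what forces the limit to be $V_1\cup_E V_2$ with $L_i=H_i\in|-K_{V_i}|$, so $L_i^2=L_i.D_i=1$. In your version the assertion that ``the invariants force $L_i^2=1$, $L_i.D_i=1$'' is unsupported; a priori the limit could have a single $0$-surface with $L^2=2$ (cases (3)--(6) of Proposition \ref{classifydeg2}), and to exclude these you need the argument the paper makes at the outset, namely that by Theorem \ref{thmthompson} together with Corollaries \ref{cornonunigonal} and \ref{corunigonal} all such pairs are already GIT stable points of $\widehat\calP_2$, so the only missing KSBA pairs are the two-del-Pezzo ones.

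Second, your flip diagram with a common contraction $Z$ and the claimed sign conditions on the log canonical class is asserted, not proved: you never construct $Z$, and checking relative negativity/positivity of curves in these moduli spaces is precisely the hard global input. The paper instead produces a common \emph{resolution} $\widetilde\calP$ (the Kirwan blow-up of $\widehat\calP_2$ along $\widetilde Z_1$, or equivalently a space built from the deformation theory of $V_1\cup_E V\cup_E V_2$), and the step that upgrades the set-theoretic description to morphisms is the Luna-slice weight computation of \S\ref{kirwanp2}: the $\bC^*$-stabilizer acts on the $22$-dimensional normal slice with the listed weights, so the exceptional fiber of $\widetilde\calP\to\widehat\calP_2$ is $\bP(1,1,2,2,3,3,4,4,5,6)\times\bP(1,1,2,2,3,3,4,4,5,6)$, which by the Pinkham--Looijenga description (Lemma \ref{modulipairs}, Corollary \ref{cor2dp1}) is exactly the moduli of slc pairs $V_1\cup_E V_2$ with $E$ fixed; contracting the $\bP^1$-direction (the modulus of the middle component $V$) then gives $\overline\calP_2$, while contracting the del Pezzo moduli gives $\widehat\calP_2$, yielding diagram \eqref{diagflip}. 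Nothing in your proposal plays this role, so the statement that the birational map is a flip of $\widetilde Z_1$ into the $19$-dimensional locus $\Delta_{2E_8}$, compatibly with the projection to $(\calD/\Gamma_2)^*$, remains unestablished beyond the set-theoretic level. (Your final paragraph on compatibility with Baily--Borel via the $j$-invariant is essentially the paper's argument and is fine, granted the existence of the extended period map via slc/insignificant singularities.)
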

\begin{proof}
By Thompson's Theorem \ref{thmthompson} (see also Section \ref{sectpolanti}, esp. Proposition \ref{classifydeg2}) and the GIT analysis (see esp. Corollaries \ref{cornonunigonal} and \ref{corunigonal}), we see that the only KSBA stable pairs $(X,H)$ that are not occurring in the GIT quotient $\widehat \calP_2$ are those for which $X$ is a union of two two del Pezzo surfaces of degree $1$ glued along an anticanonical section $E$. More precisely, $X$ is given as
$$X=V(z^2-f_6(x_i,y), x_0x_2)\subset \bP(1,1,1,2,3)$$
and $H$ is induced by a linear form in $x_i$. We denote by $\Delta_{2E_8}\subset \overline\calP_2$ the closure of this locus. We have $\dim \Delta_{2E_8}=19$ corresponding to $1$ modulus for $E$, $8$ moduli for each of the del Pezzo surfaces $V_i$, and $1$ modulus for each of the polarizing divisors $H_i\in|-K_{V_i}|$ on $V_i$. Note also that since $X$ has at worst slc singularities, which is the same as insignificant cohomological singularities, there is (at least set theoretically) a map $\overline\calP_2\to (\calD/\Gamma_2)^*$ to the Baily--Borel compactification of $\calF_2$ (see \cite{shahinsignificant}).  
From \cite[Thm. 5.4, \S(5.2.2)]{friedmanannals} (see also Section \ref{secttype2} below), we obtain that $\Delta_{2E_8}\subset \overline\calP_2$ maps to the closure $\overline{\mathrm{II}}_{2E_8+A_1}\cong \bP^1$ of the Type II component labeled by $2E_8+A_1$ (see Rem. \ref{labelbb} and Fig. \ref{fig4}). In other words, there is a fibration $\Delta_{2E_8}\to \bP^1$ given by the $j$-invariant of the gluing curve $E$ with $18$-dimensional fibers.

On the other hand, for the strictly semi-stable locus $\widetilde Z_1\subset \widehat \calP_2$, we have the morphism:
\begin{eqnarray*}
\widehat\calP_2&\to\widehat\calM&\to (\calD/\Gamma_2)^*\\
\widetilde Z_1&\to \widehat Z_1&\to\overline{\mathrm{II}}_{2E_8+A_1},
\end{eqnarray*}
which realizes $\widetilde Z_1$ as a $\bP^1$-fibration (up to finite stabilizer issues) over $\overline{\mathrm{II}}_{2E_8+A_1}\cong\bP^1$; the fibration is given again by a $j$-invariant (see Rem. \ref{remjinv}). Geometrically, the points of $\widetilde Z_1$ are in one-to-one correspondence with pairs $(X,H)$ where $X$ is the double cover of $\bP^2$ (or similarly for $\Sigma_4^0$) branched in the union three conics pairwise tangent at two fixed points, and $H$ is induced from the line 
passing through these two points. The surface $X$ will have two $\Ee$ singularities. Since $H$ passes through them, $(X,H)$ is KSBA unstable. The KSBA replacement (obtained by applying Thm. \ref{thmksba}) is analyzed in \S\ref{semistablereplace} below. Essentially, the resolution $V$ of $X$ is a non-minimal elliptic ruled surface (over some elliptic curve $E$) with two disjoint $(-1)$-sections. Then, the semi-stable model associated to such a surface is $X_0=V_1\cup_E V\cup_E V_2$ with $V_i$ degree $1$ del Pezzo with fixed anticanonical section $E$. The KSBA model contracts $V$ resulting in $V_1\cup_E  V_2$ which corresponds to a point in $\Delta_{2E_8}$. On the other hand, the GIT model contracts the surfaces $V_i$ giving the surface $X$ which corresponds to a point in $\widetilde Z_1$.  Thus, the birational map $\overline\calP_2\dashrightarrow \widehat\calP_2$ (defined over $(\calD/\Gamma)^*$)  replaces $\Delta_{2E_8}\subset \overline\calP_2$  by $\widetilde Z_1\subset \widehat \calP_2$ by forgetting the modulus of $V$ and $V_1\cup V_2$ respectively. 

In other words, we obtain the following diagram:
\begin{equation}\label{diagflip}
\xymatrix{
&\widetilde \calP\ar@{->}[rdd]\ar@{->}[ldd]&\\
&\Delta_{E_8^2+A_1}\ar@{^{(}->}[u]\ar@{->}[rdd]\ar@{->}[ldd]&\\
\overline{\calP}_2\ar[dr] |!{[ur];[d]}\hole&&\widehat{\calP}_2\ar[dl] |!{[ul];[d]}\hole\\
\Delta_{E_8^2}\ar@{^{(}->}[u]\ar@{->}[rd]&(\calD/\Gamma_2)^*&\widetilde Z_1\ar@{^{(}->}[u]\ar@{->}[ld]\\
&\overline{\mathrm{II}}_{2E_8+A_1}\ar@{^{(}->}[u]&
}
\end{equation}
where $\widetilde \calP$ is a blow-up of $\widehat \calP_2$ along $\widetilde Z_1$ with exceptional divisor $\Delta_{2E_8+A_1}$ which parameterizes pairs $(X_0,H)$ with $X_0=V_1\cup V_0\cup V_2$ and $H=H_1+F_0+H_2$ as discussed in \S\ref{semistablereplace}. At this point the description is only set theoretical. To see that maps in \eqref{diagflip} are actually morphisms, one can proceed in two ways: Geometrically, one can construct a neighborhood of $\Delta_{2E_8+A_1}$ by deformation theory as in \cite{friedmanannals} (see also Rem. \ref{remtoroidal}) and then glue it to common open subset $\calP_2^s$  of  $\widehat \calP_2$ and $\overline \calP_2$ to obtain $\widetilde \calP$. Alternatively, we define $\widetilde \calP$ as the Kirwan desingularization of $\widehat \calP$ along the strictly semi-stable locus $\widetilde Z_1$ (N.B. there are only $\bC^*$-stabilizers). Then, we obtain \eqref{diagflip} by the analysis of the GIT quotient $\widehat \calP$ along $\widetilde Z_1$ as discussed in \S\ref{kirwanp2} below. 
\end{proof}

\begin{remark}
The following degenerations of $V_1\cup_E V_2$ are allowed.  First, $E$ is either smooth elliptic (Type II case) or nodal irreducible with $p_a(E)=1$ (Type III case). The del Pezzo surfaces are allowed to have ADE singularities. As degenerate cases, we allow also cones, i.e. singularities of type $\Ee$  (the cone over a degree $1$ elliptic curve) in the Type II case or the degenerate cusp which is obtained as the cone over a nodal curve. The normalization in this latter case is in fact $\bP^2$ (see  Rem. \ref{mostdegdp}). 
\end{remark}

\subsection{The KSBA replacement of the strictly semistable locus}\label{semistablereplace}
We are now interested in identifying the KSBA stable replacement for the strictly semistable locus $\widetilde Z_1\subset \widehat\calP_2$. As usually, we consider a family $(\sX,\calH)/\Delta$ of semi-stable GIT pairs such that the central fiber $(X,H)$ is strictly semi-stable (and thus $0\in \Delta$ maps to $\widetilde Z_1\subset \widehat\calP_2$). In fact, without loss of generality we can assume $(X,H)$ to correspond to a minimal orbit. As usually, to understand the KSBA limit for $\sX^*/\Delta^*$  one has to arrange $(\sX,\calH)/\Delta$ in a semi-stable (or even Kulikov) form and then follow the arguments of Theorem \ref{thmksba} to obtain the KSBA limit. We sketch the computation below. Note that the semi-stable computations here are ``generic'' and their role is to give a geometric interpretation for Theorem \ref{thmflip}. The global properties of diagram \eqref{diagflip} follow from the discussion of \S\ref{kirwanp2}. 

\subsubsection{The geometry of the minimal orbits of $\widehat \calP_2$}\label{resolve2e8}
Consider the pairs $(X;H)$ associated to a minimal orbit of a strictly semistable point (i.e. corresponding to a point in $\widetilde Z_1$). As discussed, $X$ is the double cover of  $\bP^2$ branched along the sextic (the unigonal case is similar and left to the reader)
$$C=V\left((x_0x_2-\alpha_1 x_1^2)(x_0x_2-\alpha_2 x_1^2)(x_0x_2-\alpha_3 
x_1^2)\right)\subset \bP^2, \textrm{ for some } \alpha_1,\alpha_2,\alpha_3\in \bC.$$
The polarization $H$ is the pull-back of the line $L=V(x_1)$. The surface $X$ has two $\widetilde E_8$ singularities corresponding to the points $p_1=(1:0:0)$ and $p_2=(0:0:1)$. Consider $V\to X$ the minimal resolution (obtained via a single weighted blow-up of $p_i$). It is well known that  the two exceptional divisors $E_1$ and $E_2$ are elliptic curves with $E_1^2=E_2^2=-1$. 

We are interested here in understanding the geometry of the pair $(V;H)$, where $H$ is the polarizing divisor (i.e. $|H|$ defines the map $V\to X\to \bP^2$, and $H$ maps to $L\subset \bP^2$).  Using the standard procedure of resolving double covers,  we obtain the following commutative diagram:
$$\xymatrix{
V\ar@{->}[rr]\ar@{->}[dd]\ar@{->}[rd]&&\widetilde P\ar@{->}[dd]\ar@{->}[rd]&\\ 
&E\ar'[r][rr]&& \Pi\cong \bP^1\\
X\ar@{->}[rr]\ar@{-->}[ur]&&\bP^2\ar@{-->}[ur]&
}$$
where
\begin{itemize}
\item $\Pi\cong \bP^1$ is the pencil of conics passing through the points  $p_1,p_2\in \bP^2$ and tangent to the lines $V(x_0)$ and $V(x_2)$;
\item $E\to \Pi$ is the double cover branched in the points corresponding to the three special conics $V(x_0x_2-\alpha_ix_1^2)$ and to the double line $V(x_1^2)$ (in coordinates, $E$ is the double cover of $\bP^1$ branched in $\alpha_1,\alpha_2,\alpha_3,\infty$);
\item $\widetilde P$ is the blow-up of $\bP^2$ twice at each of the points  $p_1$ and $p_2$, followed by the contraction of the resulting two $(-2)$-curves (alternatively, $\widetilde P$ is a weighted blow-up of $\bP^2$ at the two special points; $\widetilde P$ has two $A_1$ singularities); 
\item the horizontal arrows are double covers;
\item $\bP^2\dashrightarrow \Pi$ is the conic bundle fibration given by mapping a  point $x(\neq p_i)\in \bP^2$ to the unique member of the pencil $\Pi$ that passes through $x$.
\end{itemize}

\begin{lemma}
With notations as above, $V\to E$  is an elliptic ruled surface, and the two 
exceptional divisors $E_1$ and $E_2$ are two disjoint sections of self-intersection $-1$. Furthermore, 
\begin{itemize} 
\item[i)] The strict transform of the line $L=V(x_1)\subset \bP^2$ gives a special fiber $F_0$, which can be taken as the origin of $E$ (and of the sections $E_i$).
\item[ii)] There are two reducible fibers for $V\to E$ corresponding to the reducible conic $V(x_0x_2)$ in the pencil $\Pi$. In particular, $V$ is the blow-up at two points of a geometrically ruled surface. 
\item[iii)] The pullback of the line $L\subset \bP^2$ to $V$ is 
$$H=F_0+E_1+E_2,$$ and thus the linear system $|F_0+E_1+E_2|$ gives the map $ V\to X\xrightarrow{2:1}\bP^2$. 
\end{itemize}
\end{lemma}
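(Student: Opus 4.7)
My approach is to exploit the commutative diagram in the statement: $\widetilde P\to\Pi$ is the conic bundle obtained by resolving the (tangential) base locus of the pencil $\Pi$, the map $E\to\Pi$ is a double cover of $\bP^1$, and $V\to\widetilde P$ is the double cover induced from $\overline X\to\bP^2$ after resolving the two $\Ee$ singularities. The map $V\to E$ is then the first factor of the Stein factorization of $V\to\Pi$. To see that $V\to E$ is an elliptic-ruled surface, I would observe that $E\to\Pi\cong\bP^1$ is branched exactly at the four values $\alpha=\alpha_1,\alpha_2,\alpha_3,\infty$---the first three where $C_{\alpha_i}$ is an irreducible component of the sextic $C$, and $\alpha=\infty$ where the generic pencil member $x_0x_2-\alpha x_1^2$ degenerates to the double line $V(x_1^2)$. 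Riemann--Hurwitz yields $g(E)=1$, and a generic fiber of $V\to E$ is one sheet of the double cover of a generic $\bP^1$-fiber of $\widetilde P\to\Pi$, hence $\bP^1$.

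For the sections claim, $E_i^2=-1$ is the standard invariant of the minimal resolution of an $\Ee$-singularity. The $E_i$ are disjoint (since $p_1\neq p_2$) and are sections of $V\to E$, because the strict transform on $\widetilde P$ of a generic conic $C_\alpha$ meets the final exceptional curve over each $p_i$ transversally in one point, which lifts to a single transverse intersection with $E_i$ on $V$. Items (i) and (ii) now follow directly: $\alpha=\infty$ is a branch point of $E\to\Pi$, so has a unique preimage $q_0\in E$ (which we take as the origin), and the corresponding reduced fiber of $V\to E$ is the strict transform $F_0$ of $L=V(x_1)$. By contrast, $\alpha=0$ is not a branch point since $\alpha_i\neq 0$, so has two preimages in $E$, each giving a reducible fiber coming from the strict transforms of $V(x_0)$ and $V(x_2)$. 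Contracting the appropriate $(-1)$-components in those two reducible fibers realizes $V$ as the blow-up at two points of a geometrically ruled surface over $E$.

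For (iii), write $\pi_V:V\to\bP^2$ for the composite and $H=\pi_V^*L$. Since the only curves contracted by $\pi_V$ are $E_1,E_2$ and the strict transform of the preimage of $L$ in $\overline X$ is exactly $F_0$, we may write $H=F_0+m_1E_1+m_2E_2$ for some integers $m_i\ge 0$. Using $F_0\cdot E_i=1$ (section meets fiber), $E_i^2=-1$, and the projection-formula identity $H\cdot E_i=0$ (as $E_i$ is contracted by $\pi_V$), the equation $1-m_i=0$ forces $m_i=1$, so $H=F_0+E_1+E_2$ as claimed. The main obstacle would have been a direct verification of (iii) by tracking multiplicities through the $\Ee$-resolution of $\overline X$, since $L$ passes through both singular points and the resolution involves several intermediate blow-ups; the projection-formula argument above cleanly bypasses this bookkeeping.
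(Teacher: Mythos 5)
Your proposal is correct and follows the same route the paper intends: the paper's proof is just ``the claims follow easily from the above discussion,'' i.e.\ from the commutative diagram relating $V$, $\widetilde P$, $E$, and $\Pi$, and your argument is exactly a careful working-out of that diagram (Stein factorization of $V\to\Pi$, branching of $E\to\Pi$ at $\alpha_1,\alpha_2,\alpha_3,\infty$, the reducible fibers over the two preimages of the split conic $V(x_0x_2)$). The projection-formula computation $H\cdot E_i=0$ forcing $m_i=1$ in (iii) is a clean way to supply the one step the paper leaves implicit, and it is consistent with the check $H^2=(F_0+E_1+E_2)^2=2$.
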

\begin{proof}The claims follow easily from the above discussion.\end{proof}
\begin{remark}[compare Rem. \ref{remjinv}]
The cross-ratio associated to the elliptic curve $E$ is $\lambda=\frac{\alpha_1-\alpha_3}{\alpha_2-\alpha_3}$ and then the $j$-invariant is
$
j(E)=2^8\cdot \frac{(\lambda^2-\lambda+1)^3}{\lambda^2(\lambda-1)^2}$.
Alternatively, the affine equation of $X$ near a singular point 
$
z^2=(x-\alpha_1y^2)(x-\alpha_2y^2)(x-\alpha_3y^2)$
can be put into the Weierstrass form $z^2=x^3+Axy^4+By^6$,  where
$$
A=\sigma_2-\frac{(\sigma_1)^2}{3},\ B=\sigma_3+\frac{\sigma_1\sigma_2}{3}-\frac{2(\sigma_1)^3}{27},$$
and $\sigma_i$ are the elementary symmetric functions in $\alpha_i$. 
In this form, the discriminant and the $j$-invariant have the following expressions 
$$
\Delta=4A^3+27B^2,\ j=1728\cdot\frac{4A^3}{27B^2}.$$
Note that $\Delta=-(\alpha_1-\alpha_2)^2(\alpha_2-\alpha_3)^2(\alpha_3-\alpha_1)^2$, and thus the elliptic curve $E$ is singular iff two of the $\alpha_i$ coincide (i.e.  case $\tau$). In this case, if not all three coincide, it is easy to see that the resulting surface is a Type III degeneration. 
\end{remark}

\subsubsection{The semi-stable reduction}
We are now interested in understanding the Kulikov model associated to a $1$-parameter family with central fiber $(X,H)$ as above. In the generic case, the weighted blow-up of the two $\Ee$ singularities of $X$ (as above, but this time keeping track of the ambient threefold $\sX$) gives a semi-stable model with central fiber $X_0=V_1\cup V\cup V_2$, where 

\begin{itemize}
\item $V$ is the resolution of $X$ as in \S\ref{resolve2e8}, and $V_1$ and $V_2$ are degree $1$ del Pezzo surfaces with a marked anticanonical section $D_i\cong E$; 
\item $V_1$ is glued to $V$ along the elliptic curve $D_0\cong E\cong E_1$ (and similarly for $V_2$); the gluing is such that the unique base point of $|-K_{V_1}|$ matches with the point $E_1\cap F_0$. 
\end{itemize} 
Note that the triple point formulas (e.g.  $D_1^2+E_1^2=1+(-1)=0$) are satisfied, and we have indeed a Kulikov model. 

Keeping track of the polarization, we obtain for the GIT model the polarized components: 
$$(V_1,D_1;0),\ (V,E_1+E_2;F_0+E_1+E_2),\ (V_2,D_2,0).$$
This leads to a KSBA unstable limit since $(V,E_1+E_2;\epsilon(F_0+E_1+E_2))$ is not slc (the polarizing divisor contains a double curve). Then the KSBA replacement is obtained by twisting the polarization by $\calO(-V)$ (on the total threefold space), resulting into the polarized components
$$(V_1,D_1;H_1),\ (V,E_1+E_2;F_0),\ (V_2,D_2;H_2),$$
with $H_i\in|-K_{V_i}|$. In this model, $V$ becomes a $1$-surface and thus will be contacted to the elliptic curve $E$. We conclude that the KSBA limit in this situation is $\overline{X}_0=V_1\cup_EV_2$, two  degree $1$ del Pezzos glued along an anticanonical surface. 

\begin{remark}
Note that the same analysis can be easily extended to cover the Type III case (i.e. $E$ nodal) as well. For example in this case, the central fiber  $X_0=V_1\cup V\cup V_2$ of the corresponding semi-stable degeneration  will contain again of two del Pezzo surfaces $V_i$, but the double curves will be nodal anticanonical sections. Additionally, $V$ is a rational surface which is a degeneration of an elliptic ruled surface. Namely,  $V$ is a non-normal surface such that its normalization $\widetilde V$ is a ruled surface blown-up at two points. For instance, $V$ can be obtained by blowing-up two points on a section of self-intersection $1$ on the Hirzebruch surface $\bF_1$, and then gluing two irreducible fibers of the resulting surface to get $V$.
\end{remark}

\subsection{The structure of the Kirwan blow-up $\widetilde \calP$ of $\widehat\calP_2$ along the strictly semi-stable locus}\label{kirwanp2}
We now analyze the structure of the Kirwan blow-up $\widetilde \calP$ of $\widehat\calP_2$ along the strictly semi-stable locus $\widetilde Z_1$. Namely, we show that $\widetilde \calP\to \widehat\calP_2$ is a fibration over $\widetilde Z_1$ with fiber isomorphic to $W\bP^9\times W\bP^9$, which can be identified with the moduli of polarized surfaces of type $V_1\cup_E V_2$ with $V_i$ degree $1$ del Pezzo (and $E$ fixed).  On the other hand $\widetilde Z_1$ is a $\bP^1$ fibration over  $\overline {\mathrm{II}}_{2E_8+A_1}$. Combining this with the geometric analysis of \S\ref{semistablereplace}, we  get the diagram \eqref{diagflip}, completing the proof of Theorem \ref{thmflip}.  
\subsubsection{Preliminaries on degree $1$ del Pezzo surfaces}
We recall that a degree $1$ del Pezzo surface has the following anticanonical model:
\begin{equation}\label{eqdelpezzo}
\left\{z^2=y^3+yg_4(x_0,x_1)+g_6(x_0,x_1)\right\}\subset \bP(1,1,2,3),
\end{equation}
and the point $(0:0:1:1)$ is base locus of the anticanonical linear system. We are interested in surfaces of type $V_1\cup_EV_2$, where both $V_i$ are degree $1$ del Pezzo surfaces, $E$ is anticanonical section, and the gluing is such that the base points of the anticanonical systems on $V_i$ match (a necessary condition for $V_1\cup_EV_2$ to occur as a central fiber in a degree $2$ $K3$ degeneration). As already noted, $V_1\cup_EV_2$ has the following description:
$$V_1\cup_EV_2=\left\{z^2=y^3+yg_4(x_0,x_1,x_2)+g_6(x_0,x_1,x_2), x_0x_2=0\right\}\subset \bP(1,1,1,2,3),$$
which is compatible with \eqref{equniform} and Theorem \ref{thmthompson}. 
Note that the gluing curve is  given by intersecting with $V(x_0,x_2)$:
$$E=\{z^2=y^3+Byx_1^4+Cx_1^6\}\subset \bP(1,2,3),$$
an elliptic curve in Weierstrass form. The polarizing divisor in this situation is given by a linear form $l(x_0,x_1,x_2)$. Finally, the cone over $E$ is given by 
$$\{z^2=y^3+Byx_1^4+Cx_1^6\}\subset \bP(1,1,2,3),$$
with vertex at $(1:0:0:0)\in \bP(1,1,2,3)$; it appears when $x_0$ (or $x_2$) does not occur in $g_4$ and $g_6$. 
\subsubsection{The moduli of degree $1$ del Pezzo with a marked anticanonical section}
A theorem of Pinkham and Looijenga (e.g. \cite{pinkham} and \cite{looijengainv,looijengasimple}) identifies the moduli space of pairs $(V,D)$ consisting of a degree $1$ del Pezzo surface $V$ with a marked hyperplane section $D$ to the weighted projective space $\bP(1,2,2,3,3,4,4,5,6)$ (N.B. $2,2,\dots,5,6$ are  the coefficients of the simple roots $\alpha_i$ of $E_8$ in the highest root $\widetilde \alpha$). One way of seeing this is to consider the versal deformation of an $\Ee$ singularity $\{z^2=y^3+Byx^4+Cx^6\}\subset (\bC^3,0)$,  which is given by 
\begin{equation}\label{eqversal}
\left\{z^2=y^3+Byx^4+Cx^6+t_1yx^3+\dots+t_5x^5+\dots+t_{10}\right\}\subset (\bC^3,0)\times (\bC^{10},0).
\end{equation}
Since this is a singularity with  $\Gm$-action in the sense of Pinkham, one gets a $\bC^*$-action on the germ $(t_i)\in (\bC^{10},0)$ with weights $0$, $-1$, $-2$, $-2$, $-3$, $-3$, $-4$, $-4$, $-5$, $-6$. The deformations in the $0$-direction correspond to equisingular deformations (i.e. keep the $\Ee$ singularity, but modify the $j$-invariant). The deformations in the negative weight correspond to smoothing deformations, and when considered modulo $\bC^*$ one gets that the resulting quotient $\bP(1,2,2,3,3,4,4,5,6)$ is a moduli space of del Pezzo pairs  $(V,D)$ with $D$ isomorphic to the fixed elliptic curve $E=V(z^2-y^3+Byx^4+Cx^6)$. Simply, this corresponds to homogenizing the equation of the versal deformation of $\Ee$; the result is  the del Pezzo equation \eqref{eqdelpezzo} (here $x=\frac{x_1}{x_0}$ and the section $D$ corresponds to the hyperplane at infinity $V(x_0)$). 

\begin{remark} \label{remstructure2e8a1}
Alternatively, the moduli of pairs $(V,D)$ with $D\cong E$ (a fixed elliptic curve) can be obtained by  considering the mixed Hodge structure (MHS) on $V\setminus D$. Since $V$ is the blow-up of $\bP^2$ at $8$ points lying on $E$, the classifying space for these MHS is $E\otimes_\bZ E_8$. Then, the moduli space of pairs $(V,D)$ is 
$$(E\otimes_\bZ E_8)/W(E_8)\cong\bP(1,2,2,3,3,4,4,5,6),$$
the isomorphism to the weighted projective space being the content (in more general circumstances) of the above mentioned Theorem of Looijenga (\cite{looijengainv}). This description allows us to see the moduli of semi-stable models $X_0=V_1\cup_EV\cup_EV_2$ (for $E$ fixed) as the product $\bP(1,2,2,3,3,4,4,5,6)\times \bP^1\times \bP(1,2,2,3,3,4,4,5,6)$ (by applying Looijenga's theorem to the root lattice $R=2E_8+A_1$; see also Rem. \ref{remtoroidal}). 
\end{remark}

Here, we are interested in the moduli of triples $(V,D;H)$ where $V$ is a degree $1$ del Pezzo, $D\cong E$ is a fixed anticanonical section, and $H$ is another anticanonical section. By a simple modification of the  arguments from above for $(V,D)$, we get:
\begin{lemma}\label{modulipairs}
The moduli of triples $(V,D;H)$, where $V$ is a degree $1$ del Pezzo, $D,H\in |-K_V|$, $D\neq H$, and $D\cong E$ is fixed, is the quasi-projective variety 
$$\bP(1,1,2,2,3,3,4,4,5,6)\setminus\{(1:0:\dots:0)\}.$$  
Furthermore, it has a $1$-point compactification to the weighted projective space $\bP^9(1,\dots,6)$. The extra point naturally corresponds to the cone over $E$ (with an $\Ee$ singularity at the vertex $v$) together with a hyperplane section away $v$. 
\end{lemma}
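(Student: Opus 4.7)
The plan is to extend the Pinkham--Looijenga description of the moduli of pairs $(V, D)$ (the quotient $\bP(1,2,2,3,3,4,4,5,6)$ of the versal base of $\Ee$ by $\Gm$) by adjoining one more weighted coordinate encoding the choice of the second anticanonical section $H$.

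First I would set up coordinates: write the del Pezzo in the Weierstrass-style form
$$V = \{z^2 = y^3 + y g_4(x_0,x_1) + g_6(x_0,x_1)\} \subset \bP(1,1,2,3)$$
with $D = V(x_0)$. Since $|-K_V| = \bP H^0(V, \calO_V(1)) = \bP\langle x_0, x_1\rangle \cong \bP^1$, any $H \in |-K_V|$ distinct from $D$ can be uniquely written as $H = V(x_1 + a_0 x_0)$ after normalizing the coefficient of $x_1$ to $1$. Hence the extra freedom needed to specify $H$ on a fixed $(V,D)$ is an affine coordinate $a_0 \in \bC$.

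Second, I would compute the weight of $a_0$ and assemble the quotient. Under the Pinkham $\Gm$-action on $\bP(1,1,2,3)$, which acts on $(x_0,x_1,y,z)$ with weights $(-1,-1,-2,-3)$ compatible with the weights $(-1,-2,-2,-3,-3,-4,-4,-5,-6)$ already assigned to the smoothing parameters of $(V,D)$, the linear form $x_1 + a_0 x_0$ is equivariant precisely when $a_0$ has weight $-1$. Adjoining $a_0$ to the nine existing smoothing coordinates produces a $\Gm$-action on $\bC^{10}$ with weight list $(-1,-1,-2,-2,-3,-3,-4,-4,-5,-6)$, which by construction is versal (at a general point) for the triple consisting of $\Ee$, a fixed hyperplane at infinity, and a nearby transverse hyperplane. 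The Proj quotient of $\bC^{10}\setminus\{0\}$ by this $\Gm$ is the weighted projective space $\bP(1,1,2,2,3,3,4,4,5,6)$.

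Third, I would identify the distinguished point. At $(1:0:\dots:0)$ every del Pezzo smoothing parameter vanishes, so $V$ collapses to the anticanonical cone $\{z^2 = y^3 + Byx_1^4 + Cx_1^6\} \subset \bP(1,1,2,3)$ with $\Ee$ vertex $v = (1:0:0:0)$, while $a_0 \neq 0$ still cuts out a hyperplane section $H = V(x_1 + a_0 x_0)$ avoiding $v$. The residual $\Gm$-symmetry of the cone (which fixes $D$) acts transitively on the set of such hyperplane sections, so all of them represent the same isomorphism class of triple; the $a_0$-coordinate has been absorbed by the automorphism. Hence this single orbit accounts for precisely one moduli point, giving the one-point compactification of the quasi-projective moduli of triples with $V$ a genuine (at worst ADE) del Pezzo to the full $\bP^9(1,\dots,6)$.

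The main obstacle is the weight computation in the second step: one must verify that the ambient $\Gm$-action on $\bP(1,1,2,3)$ matching the Pinkham convention really puts weights $(-1,-1,-2,-3)$ on the homogeneous coordinates (forcing weight $-1$ on $a_0$), and that the $H$-direction decouples from the other versal directions — i.e.\ that adjoining the linear form $x_1 + a_0 x_0$ to the versal deformation of $\Ee$ yields the versal deformation of the triple $(\Ee,\text{line through }v, H_0)$ with $\Gm$-action as claimed. This is a direct equivariance check once the convention is fixed, but it is the one non-formal input; everything else follows from Pinkham--Looijenga and a straightforward inspection of the extremal orbit.
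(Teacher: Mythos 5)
Your proposal is essentially the paper's own argument: the paper likewise takes $H=\{x=a\}$ in the affine chart, adjoins the parameter $a$ (of weight $1$) to the nine negative-weight deformation parameters of the $\Ee$ singularity, forms the quotient $(\bC\times\bC^9)/\bC^*$ to obtain $\bP(1,1,2,2,3,3,4,4,5,6)$, and identifies the extremal point (where the $\bC^9$-component vanishes, forcing $a\neq 0$) with the cone over $E$ together with a hyperplane section missing the vertex. One correction to the equivariance check you flag as the key input: weights $(-1,-1,-2,-3)$ on $(x_0,x_1,y,z)$ are a multiple of the grading weights of $\bP(1,1,2,3)$, hence define the trivial action there, and taken literally they would force $a_0$ to have weight $0$ (since $x_0$ and $x_1$ would scale identically), not $-1$. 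The Pinkham action is instead given by weights $(0,-1,-2,-3)$ (equivalently $(1,0,0,0)$): it fixes $D=V(x_0)$ pointwise and scales the affine coordinate $x=x_1/x_0$ with weight $-1$, so $a_0$ indeed acquires weight $-1$, and with this normalization your computation and conclusion agree with the paper's proof.
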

\begin{proof}
We obtain this, by considering as before the negative weight deformations  of $\Ee$. We define $H$ to be the hyperplane $\{x=a\}$ in affine coordinates for $a\in \bC$ (or equivalently $\{x_1=ax_0\}$ after homogenization). As before we obtain the affine quotient $(\bC\times \bC^9)/\bC^*$, which gives the weighted projective space from the theorem. Note that if the $\bC^9$ component is non-zero we obtain a unique pair $(V,D)$ with $V$ a degree $1$ del Pezzo (with at worst ADE singularities) and $D\cong E$. From the $\bC$ component of the parameter space, we also get a hyperplane section $H\in |-K_V|\setminus \{D\}$ (N.B. $H=D$ corresponds to $a=\infty$). Finally, if the $\bC^9$ component vanishes, we must have $a\neq 1$ and then we get the cone over $E$ together with a hyperplane section not passing through the vertex $v=(1:0:0:0)$ of the cone. 
\end{proof}

\begin{remark}
The weighted blow-up of $(1:0:\dots:0)\in \bP^9(1,1,2,\dots,6)$ will give a $\bP^1$-fibration over $\bP^8(1,2,\dots,6)$. This corresponds geometrically to triples $(V,D;H)$ with $D\cong E$ fixed and $H$ moving in the linear system $H\in|-K_V|\cong \bP^1$ with no restriction on $H$.  Thus, the difference to the moduli space of the lemma is that all triples $(V,D;H)$ with $D=H$ are replaced in \ref{modulipairs} by the cone over $E$ (plus a general hyperplane section). This is the correct moduli space from the KSBA perspective (see also  \S\ref{sectz1} esp.  case $2E_8+A_1$(C) and Fig. \ref{fig2e8b}). 
\end{remark}
We conclude
\begin{corollary}\label{cor2dp1}
The moduli of pairs $(X,H)$ with $X=V_1\cup_EV_2$, where $V_i$ are degree $1$ del Pezzo or degenerations (in $\bP(1,1,2,3)$), $H_{\mid V_i}\in|-K_{V_i}|$, and such that $E$ is fixed and $(X,\epsilon H)$ is slc is   
$$\bP(1,1,2,2,3,3,4,4,5,6)\times \bP(1,1,2,2,3,3,4,4,5,6).$$
\end{corollary}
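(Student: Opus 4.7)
The plan is to reduce the statement to Lemma~\ref{modulipairs} applied independently to each of the two components $V_1$ and $V_2$, after checking that the gluing contributes no extra moduli and that the slc condition $(X,\epsilon H)$ slc translates precisely into the condition built into that lemma. First, I would fix the gluing data. Because $E$ is fixed and because the anticanonical system $|-K_{V_i}|$ has a distinguished base point on any degree $1$ del Pezzo (or on the cone over $E$), the requirement that $V_1 \cup_E V_2$ arise as a central fiber of a degree $2$ $K3$ degeneration forces the identification of $E \subset V_1$ with $E \subset V_2$ to match these base points. Since a pointed elliptic curve has no positive-dimensional automorphism group, the gluing is rigid, and consequently the isomorphism class of $(X,H)$ is determined by the two triples $(V_i, E; H_i)$ with $H_i := H_{|V_i}$. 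Thus the moduli factors as a product.

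Second, I would verify that the slc condition on $(X,\epsilon H)$ is exactly the condition parameterized by Lemma~\ref{modulipairs} on each factor. Working with $0 < \epsilon \ll 1$ coefficients, $(X,\epsilon H)$ slc is equivalent to $X$ slc together with the requirement that $H$ miss every log-canonical center of $X$ (see the remark after Definition~\ref{defstable}). The log-canonical centers of $V_1 \cup_E V_2$ in our admissible family are the double curve $E$ itself and, whenever one of the components degenerates to the cone over $E$, the cone vertex (an $\widetilde E_8$ or degenerate cusp singularity). Thus on each $V_i$ the divisor $H_i$ must satisfy: either $V_i$ is a genuine degree $1$ del Pezzo (possibly with ADE singularities) and $H_i \in |-K_{V_i}|$ with $H_i \neq E$, or $V_i$ is the cone over $E$ inside $\bP(1,1,2,3)$ and $H_i$ is a hyperplane section avoiding the vertex.

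Third, Lemma~\ref{modulipairs} precisely identifies this set of admissible triples $(V_i, E; H_i)$ with the weighted projective space $\bP(1,1,2,2,3,3,4,4,5,6)$: the open stratum (true del Pezzos with $H_i \neq E$) is the complement of the distinguished point $(1{:}0{:}\cdots{:}0)$, and that point is exactly the cone over $E$ equipped with a generic hyperplane section. Taking the Cartesian product over $i = 1, 2$ yields the claim. The main obstacle, which is already absorbed into Lemma~\ref{modulipairs} and the remark following it, is the justification that the KSBA compactification of the moduli of $(V_i, E; H_i)$ adds the cone-over-$E$ point in this particular way rather than allowing $H_i$ to collide with $E$; once that point is secured, the corollary is essentially formal.
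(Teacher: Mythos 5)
Your proposal is correct and follows essentially the same route as the paper, which deduces the corollary from Lemma \ref{modulipairs} by noting that $(X,H)$ is uniquely determined by the two triples $(V_i,D_i;H_i)$ and taking the product. Your extra steps (rigidity of the gluing via the matched anticanonical base points, and the translation of the slc condition into $H_i\neq D_i$ or, in the cone case, avoidance of the vertex) just make explicit what the paper's one-line proof leaves implicit.
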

\begin{proof}
This follows from the previous lemma, by noting that $(X,H)$ is uniquely determined by $(V_i,D_i;H_i)$ (where $H_i=H_{\mid V_i}$ and $D_i\cong E$). 
\end{proof}

\subsubsection{The structure of $\widehat \calP_2$ near $\widetilde Z_1$}
Let $x=(c,l)\in \bP^{N}\times \bP^2$ (where $\bP^N$ is the Hilbert scheme of sextics) be a point corresponding to the minimal orbit $(C,L)$  given by $C=V((x_0x_2-\alpha_1 x_1^2)(x_0x_2-\alpha_2 x_1^2)(x_0x_2-\alpha_3 
x_1^2))$, and $L=V(x_1)$. We are interested in the structure of the quotient $\widehat\calP_2$ near the projection $\bar x\in \widetilde Z_1$ of $x$. 

As before, by Luna's slice theorem a local model is given by the normal slice $\calN_x$ to the orbit $G\cdot x$. It is immediate to see that the stabilizer group $G_x$ acts on the space of sextics $T_x\bP^N$ with weights
$$
\begin{matrix}
\textrm{Weight}&\pm 6&\pm 5&\pm 4&\pm 3&\pm 2&\pm 1&0\\
\textrm{Multiplicity}&1&1&2&2&3&3&3
\end{matrix}
$$
and on the space of linear forms $T_l\bP^2$ by weights $\pm1$. Since $G\cdot x=G/G_x$, we get that the action of $G_x$ on  the tangent space to the orbit $T_x(G\cdot x)$ by weights $\pm 2$, $\pm 1$, and $0$ with multiplicities $1$, $2$, and $1$ respectively. We conclude that  $G_x\cong \bC^*$ acts on $N_x\cong \bC^{22}$ by  
$$
\begin{matrix}
\textrm{Weight}&\pm 6&\pm 5&\pm 4&\pm 3&\pm 2&\pm 1&0\\
\textrm{Multiplicity}&1&1&2&2&2&2&2
\end{matrix}
$$
Thus locally (in etale topology) near $\bar x$, $\widehat\calP_2$ is the quotient of $\bC^{22}$ by $\bC^*$ with weights given as above. The two dimensional $0$-weight direction corresponds to deformations preserving the strictly minimal orbit. We conclude:
\begin{lemma}
The fiber of the Kirwan blow-up $\widetilde \calP\to \widehat\calP_2$ over $\bar x\in \widetilde Z_1$ is 
$$\bP(1,1,2,2,3,3,4,4,5,6)\times \bP(1,1,2,2,3,3,4,4,5,6).$$
\qed
\end{lemma}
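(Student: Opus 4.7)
The plan is to use the Luna slice description set up in the preceding paragraphs to reduce the claim to a standard single-wall VGIT computation. By Luna's slice theorem, an étale neighborhood of $\bar x\in \widehat\calP_2$ is isomorphic to the affine GIT quotient $\calN_x \gquot G_x$ of the normal slice by the stabilizer $G_x\cong \bC^*$, whose weights on $\calN_x\cong \bC^{22}$ are recorded in the table just above the statement. Decompose $\calN_x=V_+\oplus V_0\oplus V_-$ into positive-, zero-, and negative-weight summands, so that $V_0$ is two-dimensional (matching $\widetilde Z_1$ étale-locally at $\bar x$) and $V_\pm$ are each ten-dimensional with weights $\pm(1,1,2,2,3,3,4,4,5,6)$. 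Since the only stabilizer appearing along $\widetilde Z_1$ is the full $G_x$, the Kirwan partial desingularization $\widetilde\calP\to\widehat\calP_2$ is realized in this local chart by first blowing up $V_0\subset\calN_x$ in the master space and then taking the GIT quotient with the Kirwan-shifted linearization $\calO(-\epsilon E)$.

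The fiber of $\widetilde\calP\to\widehat\calP_2$ over $\bar x$ is the part of this desingularization sitting over $\bar x\in V_0$. Since the blow-up has exceptional divisor $V_0\times\bP(V_+\oplus V_-)\cong V_0\times\bP^{19}$ (ordinary projectivization of the normal bundle), and $G_x$ acts trivially on $V_0$ but with the given mixed weights on $\bP^{19}$, the fiber identifies with the $G_x$-GIT quotient of the ordinary projective space $\bP(V_+\oplus V_-)$. Because no $G_x$-weight on $V_+\oplus V_-$ is zero, the relevant $G_x$-fixed components in $\bP(V_+\oplus V_-)$ are the two disjoint projective subspaces cut out by $v_-=0$ and $v_+=0$, and a Hilbert--Mumford computation with the Kirwan polarization shows that the semi-stable locus is their complement, on which every orbit is closed with finite stabilizer. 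This is the standard model of a single-wall VGIT flip between two weighted projective spaces: by the Thaddeus/Dolgachev--Hu flip description, the resulting geometric quotient is canonically the product of the two weighted projective spaces
\[
\bP(V_+)\times\bP(V_-)\;\cong\;\bP(1,1,2,2,3,3,4,4,5,6)\times\bP(1,1,2,2,3,3,4,4,5,6),
\]
where each factor uses the weights of the corresponding $V_\pm$. This is also consistent with Corollary~\ref{cor2dp1}, which independently interprets this product as the moduli of polarized degenerations $V_1\cup_E V_2$ with $E$ fixed.

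The main obstacle is verifying carefully that the Kirwan-shifted polarization produces exactly the claimed semi-stable locus and that the resulting quotient really is the stated product of weighted projective spaces rather than a finite cover or codimension-one modification; both points amount to tracking the character twists introduced by the Kirwan shift and checking that the $G_x$-invariant coordinate ring of $\bP(V_+\oplus V_-)$ is generated in the expected weighted bi-degrees $(a_i,b_j)$. Once this is in hand, substituting the weight vector $(1,1,2,2,3,3,4,4,5,6)$ on each of $V_\pm$ gives the stated description.
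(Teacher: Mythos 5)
Your Luna--slice setup, the decomposition $\calN_x=V_+\oplus V_0\oplus V_-$ with $G_x$-weights $\pm(1,1,2,2,3,3,4,4,5,6)$ and $0^{\oplus 2}$, and the identification of the semistable locus in the exceptional divisor with $\{v_+\neq 0\neq v_-\}$ all match the paper. The gap is in your last step: the $G_x$-quotient of the semistable locus of the \emph{ordinary} projectivization $\bP(V_+\oplus V_-)\cong\bP^{19}$ is \emph{not} $\bP(V_+)\times\bP(V_-)$ when the weights are unequal. That quotient is $\bigl((V_+\setminus 0)\times(V_-\setminus 0)\bigr)$ modulo the two-torus whose cocharacters are the overall scaling $(1,\dots,1;1,\dots,1)$ and the $G_x$-weights $(a;-a)$, whereas the product of weighted projective spaces is the quotient by the torus with cocharacters $(a;0)$ and $(0;a)$; these subtori of $(\bC^*)^{20}$ differ unless all $a_i$ are equal, and the resulting quotients are genuinely different birational models. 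A minimal example: for $a=(1,2)$, i.e.\ $\bC^*$ acting on $\bC[x_1,x_2,y_1,y_2]$ with weights $(1,2,-1,-2)$, the invariant ring is generated by $A=x_1y_1$, $B=x_2y_2$, $C=x_1^2y_2$, $D=x_2y_1^2$ with the single relation $A^2B=CD$, so the wall quotient of $\bP^3$ is $\Proj\,\bC[A,B,C,D]/(A^2B-CD)$ with $\deg(A,B,C,D)=(2,2,3,3)$, a toric surface with finite quotient singularities of orders $2$, $3$, $3$, $4$ at its torus-fixed points, while $\bP(1,2)\times\bP(1,2)\cong\bP^1\times\bP^1$ is smooth. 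So the Thaddeus/Dolgachev--Hu wall picture only tells you the quotient is birational to the product, not equal to it; the ``main obstacle'' you flag at the end is exactly where the argument breaks, and no bookkeeping of the Kirwan character shift repairs it as long as the exceptional divisor is the ordinary $\bP^{19}$.

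The repair is to take the blow-up of $V_0$ \emph{weighted} by the $G_x$-weights (this is also what the paper implicitly intends: compare its local model of $\widehat\calM\to\overline\calM$ at $\omega$, which is a weighted blow-up), so that the exceptional fiber is the weighted projectivization of $V_+\oplus V_-$ with weights $(a;a)$. Then the relevant two-torus has cocharacters $(a;a)$ and $(a;-a)$, and since $(s,t)\mapsto(st,st^{-1})$ is surjective onto $(\bC^*)^2$, this torus coincides (not just up to isogeny, so no residual finite group appears) with the one generated by $(a;0)$ and $(0;a)$; hence the fiber over $\bar x$ is exactly $\bP(1,1,2,2,3,3,4,4,5,6)\times\bP(1,1,2,2,3,3,4,4,5,6)$. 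The paper itself reads this off from the weight computation together with the modular interpretation of the two factors as in Lemma \ref{modulipairs} and Corollary \ref{cor2dp1} (the two $\widetilde E_8$ directions deform independently); either that geometric identification or the weighted-blow-up computation above is needed, and with the ordinary projectivization as you set it up the stated conclusion is false.
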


Using this lemma and the $\bP^1$-fibration of $\widetilde Z_1$ given by the $j$-invariant (see Rem. \ref{remjinv}), we obtain that the exceptional divisor $\Delta_{2E_8+A_1}$ of $\widetilde \calP$ has a fibration over $\bP^1$ (the compactified $j$-line) with fiber $W\bP^9\times W\bP^9\times \bP^1$ (compare Rem. \ref{remstructure2e8a1}). Geometrically, these fibers parameterize surfaces of type $V_1\cup_E V\cup_E V_2$ as described in \S\ref{semistablereplace}. The projection  $\widetilde \calP\to \overline\calP_2$ is then given by the contraction of the $\bP^1$ direction. For further discussion of the geometry in this case see \S\ref{sectz1} (esp. Fig. \ref{fig2e8b}).


\section{Classification of Type II Degenerations}\label{secttype2}
As established above, the moduli space of stable pairs $\overline{\calP}_2$ maps to the Baily--Borel $(\calD/\Gamma_2)^*$, which is generically a $\bP^2$-bundle. In this section we discuss the structure of the boundary of $\overline{\calP}_2$ over the Type II boundary in $(\calD/\Gamma_2)^*$. 

We recall that the semi-stable model in a Type II degeneration is a chain of surfaces $X_0=V_0\cup \dots\cup V_r$ (with $V_i$ meeting $V_{i+1}$) such that 
\begin{itemize}
\item $V_0$ and $V_r$ are rational surfaces;
\item $V_i$ are elliptic ruled surfaces;
\item the double curves are smooth elliptic and isomorphic to a fixed  curve $E$;
\item the double curves are anticanonical sections for $V_0$ and $V_r$ and sections for $V_i$ for $i\neq 0,r$. 
\end{itemize}
The normalization $X^\nu$ of the central fiber $X=\overline X_0$ of a relative log canonical model will have at most two simple elliptic singularities (besides ADE singularities), and the double curve of the normalization will be either empty (if $X$ is normal with simple elliptic singularities) or give disjoint elliptic curves, all isomorphic to $E$.  

Associated to every Type II degeneration of $K3$ surfaces there are two basic invariants: a continuos invariant, the modulus of $E$ (possibly with a level structure), and a discrete invariant, the isometry class of the lattice $\Gr^W_2H^2(X_0)_{\textrm{prim.}}$ (N.B. it is defined over $\bZ$, see \cite[\S3]{friedmanannals} for details). The discrete invariant determines the Type II boundary component to which a one-parameter semistable degeneration with central fiber $X_0$ would map. The continuous invariant determines the actual point in the Type II component where the degeneration maps (recall that the Type II components are quotients of $\fH$ by modular groups). 

For degree two, there are four Type II Baily--Borel boundary components,  labeled by the root lattices $A_{17}$, $E_7+D_{10}$, $D_{16}+A_1$, and $2E_8+ A_1$. The geometric meaning of these components (via GIT) was explained in Section \ref{sectreview} (see esp. Figure \ref{fig4}). Furthermore, Friedman \cite[Thm. 5.4]{friedmanannals} has classified the semistable models corresponding to these four cases, subject to the following normalization assumptions: there are only two components for $X_0$ (i.e.  a union of two rational surfaces), and the polarization meets the double curve (i.e. $L_i.D_i>0$) (N.B. any Type II degeneration can be arranged to satisfy these conditions, cf. \cite[Thm. 2.2]{friedmanannals}). 

As we will see below, the Friedman semi-stable models can be used to understand all the Type II boundary pairs in $\overline \calP_2$. However, as the proof of Theorem \ref{thmksba} shows, one needs to allow two operations on Friedman's models: base changes (introducing elliptic ruled surfaces in the middle) and twists by components $V_i$ of $X_0$ (these have the effect of modifying the polarization on $(V_i,D_i)$ from $L_i$ to $L_i-D_i$). Combining the list of polarized semi-stable models of Friedman with the GIT analysis of Section \ref{sectvgit}, one gets a clear picture of $\overline{\calP}_2\to (\calD/\Gamma_2)^*$ over the Type II strata. We discuss this below. The discussion can be summarized as follows:

\begin{theorem}\label{thmtype2}
The preimage in $\overline \calP_2$ of the  Type II boundary in $(\calD/\Gamma_2)^*$ consists of six irreducible components $\II_i$ as summarized in Table \ref{table1} (the index $i$ corresponds to the case in the table). Furthermore, via $\overline \calP_2\to(\calD/\Gamma_2)^*$
\begin{itemize} 
\item[i)] $\II_1$ maps to $\II_{A_{17}}$  (see Prop. \ref{poverz4});
\item[ii)] $\II_2$ and $\II_5$ map to $\II_{2E_8+A_1}$ (see Prop. \ref{poverz1});
\item[iii)] $\II_3$ maps to $\II_{D_{16}+A_1}$  (see Prop. \ref{poverz3});
\item[ii)] $\II_4$ and $\II_6$ map to $\II_{E_7+D_{10}}$ (see Prop. \ref{poverz2}).
\end{itemize}
\end{theorem}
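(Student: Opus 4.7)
The plan is to classify the Type II boundary pairs in $\overline{\calP}_2$ directly using Proposition \ref{classifydeg2}, and then to identify the target Baily--Borel Type II component for each case via Friedman's Hodge-theoretic computation \cite[Thm. 5.4]{friedmanannals}. First I would enumerate the possible polarized semi-stable models: by Shepherd-Barron's Theorem \ref{thmsb}, together with Lemma \ref{partialsmooth} which reduces elliptic-ruled ``interior'' components to the boundary cases, every Type II stable pair $(X,H)\in\overline{\calP}_2$ arises either by gluing two polarized $0$-surfaces $(V_i,D_i;L_i)$ from Proposition \ref{classifydeg2} along a smooth elliptic anticanonical section $E\cong D_1\cong D_2$ (with $L_1^2+L_2^2=2$ and matching condition $L_1.D_1=L_2.D_2$), or as a normal limit obtained by contracting one $V_i$ to a simple elliptic singularity. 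The admissible combinatorics give precisely six possibilities matching the six rows of Table \ref{table1}: the diagonal gluings of cases $(1)$--$(4)$ of the Proposition produce the reducible or two-singularity models, while cases $(5)$ and $(6)$, specialized to $\widetilde E_8$ and $\widetilde E_7$ simple elliptic singularities, provide the two components whose generic point is an irreducible rational surface.

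Second, for each of the six cases I would compute the monodromy lattice $\Gr^W_2 H^2(X_0)_{\mathrm{prim}}$, which by Remark \ref{labelbb} and Theorem \ref{thmbb} determines the target Type II component of $(\calD/\Gamma_2)^*$. For the four ``gluing'' cases this calculation is the content of \cite[\S(5.2.1)--(5.2.4)]{friedmanannals} and yields the labels $A_{17}$, $2E_8+A_1$ (A), $D_{16}+A_1$, $E_7+D_{10}$ (A) respectively. For the two remaining cases with a non-ADE simple elliptic singularity, the monodromy invariant is computed from a semi-stable reduction obtained by resolving the $\widetilde E_r$ singularity (this is the usual Clemens--Schmid calculation), giving the labels $2E_8+A_1$ (B) and $E_7+D_{10}$ (B). The dimensions are then verified by the standard modular count (one parameter for the $j$-invariant of $E$, the $E_r$ root-lattice moduli for each normalized component, and the dimension of the relevant linear system containing $H$), and the $\bP^1$-fibration structure comes from the $j$-invariant of $E$ as in Remark \ref{remjinv}.

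The main obstacle is that for both $\II_{2E_8+A_1}$ and $\II_{E_7+D_{10}}$ two distinct KSBA strata map to the same Baily--Borel stratum. In each case the two models are related by a twist of the polarization by an irreducible component of $X_0$ in the sense of Remark \ref{rembasechange}, which swaps a reducible model $V_1\cup_E V_2$ for an irreducible normal model with a simple elliptic singularity. The crucial point is that these two models give genuinely different points of $\overline{\calP}_2$ rather than two semi-stable presentations of the same boundary point --- this is precisely the new information gained by tracking the polarizing divisor $H$: on the reducible side $H$ records a choice of $H_{\mid V_i}\in |-K_{V_i}|$ for each component, whereas on the normal side the analogous datum is the position of $H$ on the contracted rational surface. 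Verifying this separation and the compatibility of the forgetful map $\overline{\calP}_2\to(\calD/\Gamma_2)^*$ with the GIT forgetful map of Theorem \ref{thmgitpair} (and with the flip diagram \eqref{diagflip} in the $2E_8+A_1$ case, together with its analogue for $E_7+D_{10}$) is exactly what is carried out case-by-case in Propositions \ref{poverz4}, \ref{poverz1}, \ref{poverz3}, and \ref{poverz2}, which together establish the four bijections listed in the theorem.
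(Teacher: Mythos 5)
Your proposal follows essentially the same route as the paper: the six cases come from the classification of polarized $0$-surfaces in Proposition \ref{classifydeg2} (with Lemma \ref{partialsmooth} disposing of elliptic ruled components), and the occurrence, irreducibility, separation of the two strata over $\II_{2E_8+A_1}$ and $\II_{E_7+D_{10}}$, and the identification of the target Baily--Borel components are deferred, exactly as in the paper, to the GIT/Friedman case analysis of Propositions \ref{poverz4}, \ref{poverz1}, \ref{poverz3}, and \ref{poverz2} (your Clemens--Schmid computation of $\Gr^W_2H^2(X_0)_{\mathrm{prim}}$ is just a more explicitly Hodge-theoretic phrasing of the matching the paper takes from \cite{friedmanannals} and the Shah--Looijenga map). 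One small imprecision worth fixing: rows 3 and 4 of Table \ref{table1} are not ``reducible or two-singularity'' models obtained by gluing two $0$-surfaces, but irreducible non-normal surfaces obtained by gluing a single $0$-surface (the quadric, resp. the degree $2$ del Pezzo) to itself along an involution of its anticanonical curve; this does not affect the argument, since each case of Proposition \ref{classifydeg2} still contributes exactly one component.
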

\begin{proof}
As discussed, the stable limits of degenerations of $K3$ surfaces are essentially determined by the components that are $0$-surfaces. The polarized $0$-surfaces in degree $2$ are classified by Proposition \ref{classifydeg2}; this gives the $6$ cases of Table \ref{table1}. The fact that these cases occur and that there is exactly one boundary component associated to each case follows from the GIT analysis. A detailed discussion of the GIT models and of the connection to the abstract point of view is done in Propositions \ref{poverz4}, \ref{poverz2}, \ref{poverz3}, and \ref{poverz1} below.
\end{proof}

\begin{remark}
The analysis of the Type II boundary of $\overline{\calP}_2$ does not depend (after ignoring finite quotient issues) on the $j$-invariant associated to the corresponding geometric object (compare Remark \ref{remjinv}).  Thus, the pre-images of Type II components in $\overline\calP_2$ will be certain fibrations over the affine $j$-line. The limits as $j\to \infty$ give Type III pairs; the classification of those will be discussed in Section \ref{secttype3}. 
\end{remark}

\subsection{Case $A_{17}$ (\cite[(5.2.1)]{friedmanannals}, \cite[Thm. 2.4 (II.4)]{shah},  \cite[Table 1 (II.3)]{thompson})}\label{sectz4} 
The Type II Baily-Borel boundary component $\textrm{II}_{A_{17}}$ corresponds to the stratum $Z_4$ in the GIT quotient $\widehat{\calM}$, and in fact $\widehat{\calM}\to(\calD/\Gamma_2)^*$ is an isomorphism along this stratum. Since the stratum $Z_4$ corresponds to stable GIT points, it follows that $\widehat \calP_2\to \widehat \calM$ is a $\bP^2$-fibration (up to finite stabilizers) over $Z_4\cong \bA^1$. Finally, $\overline\calP_2$ and $\widehat \calP_2$ agree over this stratum. Thus, we conclude (see \S\ref{sectexample} for the last statement): 

\begin{proposition}\label{poverz4}
The moduli of pairs $\overline\calP$ is (up to finite stabilizers) a $\bP^2$-fibration over the Type II boundary component $\II_{A_{17}}$. In fact, the closure of this locus remains a $\bP^2$-bundle over $\overline{\II}_{A_{17}}\cong \bP^1$.
\end{proposition}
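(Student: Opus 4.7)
My plan is to transfer the $\bP^2$-fibration structure from $\widehat\calP_2$ to $\overline\calP_2$ using the GIT stability of the $Z_4$ stratum, and then verify that the bundle structure extends across the unique Type III cusp by a direct cohomology computation.

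First, by Theorem \ref{thmshah1}(1) and the stability remark following it, every minimal orbit lying over $Z_4$ --- i.e.\ $f_3^2$ with $f_3$ a GIT stable cubic --- is GIT stable in $\bP H^0(\bP^2, \calO(6))$. The VGIT principle used throughout Section \ref{sectvgit} (if $C$ is GIT stable then $(C,L)$ is $\epsilon$-stable for every line $L$ and every $0 < \epsilon \ll 1$), combined with Theorem \ref{thmgitpair}, shows that $\widehat\calP_2 \to \widehat\calM$ is, up to finite stabilizers, a $\bP^2$-fibration with fibers $\check\bP^2$ over the preimage of $Z_4$. The flip diagram of Theorem \ref{thmflip} only modifies $\widetilde Z_1 \subset \widehat\calP_2$, which lies over $\widehat Z_1$; since $Z_4$ is disjoint from $\widehat Z_1$ in $\widehat\calM$, the identification $\widehat\calP_2 \cong \overline\calP_2$ holds over the preimage of $Z_4$, and combined with Theorem \ref{thmlooijenga} and Remark \ref{remjinv}(i) this proves the first assertion.

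For the closure, the only additional fiber to analyse is the one over the Type III point $\infty \in \overline{\II}_{A_{17}} \cong \bP^1$. By Table \ref{table1} and Proposition \ref{classifydeg2}(1), the KSBA stable pairs there are of the form $(X,H)$ with $X = V_1 \cup_E V_2$, $V_i \cong \bP^2$, $E$ a fixed nodal plane cubic, and $H_i := H|_{V_i} \in |\calO_{\bP^2}(1)|$ subject to the compatibility $H_1|_E = H_2|_E$. On each $\bP^2$ the restriction map $H^0(\bP^2,\calO(1)) \to H^0(E,\calO_E(3))$ is an isomorphism (both sides have dimension $3$, and no line vanishes on a cubic), so $H_1$ determines $H_2$ uniquely and the fiber is again $\bP^2$. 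To upgrade from fibration to bundle, I would invoke that on the universal pair $\pi : (\sX,\calH) \to \overline\calP_2|_{\overline{\II}_{A_{17}}}$ the function $t \mapsto h^0(X_t, \calO_{X_t}(H_t))$ is constantly $3$ (by Remark \ref{remdef} at interior points and by the calculation above at $\infty$); hence $\pi_*\calO_\sX(\calH)$ is locally free of rank $3$ by Grauert's theorem, and $\overline\calP_2|_{\overline{\II}_{A_{17}}}$ is its projectivization.

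The main obstacle I expect is verifying that the Type III fiber matches the description in Table \ref{table1}, i.e.\ that the KSBA limit of a $1$-parameter degeneration staying inside $Z_4$ as $j \to \infty$ is exactly $V_1 \cup_E V_2$ with $V_i \cong \bP^2$ and $E$ nodal, with no elliptic-ruled bridges surviving the contraction to the log canonical model. This reduces, via the recipe of Theorem \ref{thmksba} and Remark \ref{rembasechange}, to following a Kulikov model of the degeneration of $f_3^2$ through nodal cubics into $(x_0x_1x_2)^2$ and tracking the polarization twist: the non-minimal bridge components become $1$-surfaces of degree $0$ with respect to $\calH$ and are contracted by the relative log canonical morphism, leaving precisely the two rational $0$-surfaces.
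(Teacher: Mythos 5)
Your treatment of the open stratum is exactly the paper's argument: $Z_4$ consists of GIT stable points, so $\widehat\calP_2\to\widehat\calM$ is (up to finite stabilizers) a $\bP^2$-fibration over $Z_4$, the flip of Theorem \ref{thmflip} only modifies $\widetilde Z_1$, and $\widehat\calM\to(\calD/\Gamma_2)^*$ is an isomorphism along $Z_4$, so $\overline\calP_2$ and $\widehat\calP_2$ agree over $\II_{A_{17}}$. That part is fine.

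The closure statement is where you have a genuine gap. Your description of the fiber over the Type III point --- all stable pairs are $(V_1\cup_E V_2,H)$ with $E$ a \emph{fixed irreducible nodal} cubic, $H_1\in|\calO_{\bP^2}(1)|$ arbitrary and $H_2$ determined by restriction to $E$ --- is not correct: the nodes of $E$ are degenerate cusp points of $X$, hence log canonical centers, so when $H_1$ passes through a node the pair $(X,\epsilon H)$ is not slc and is \emph{not} a point of $\overline\calP_2$. The corresponding points of the would-be $\bP^2$ are instead occupied by stable pairs in which the surface itself degenerates further: $E$ becomes a conic plus a line, or a triangle (the strata $\III_\alpha$ and $\III_\zeta$ of Table \ref{table2} and Section \ref{secttype3}). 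This is precisely the phenomenon of the toy example in \S\ref{sectexample} (nodal cubic replaced by conic plus line as the marked line approaches the node), and that example is in fact the paper's justification of the closure claim; your own fiber computation contradicts it. For the same reason the Grauert step does not work as written: $\pi_*\calO_\sX(\calH)$ for the universal family is a sheaf on the boundary component itself, not on $\overline{\II}_{A_{17}}\cong\bP^1$, and since the underlying polarized surface changes within the fiber over $\infty$ there is no single family $(\sX,\calL)$ over $\bP^1$ whose relative linear system you can projectivize; the bundle structure has to be extracted from the GIT picture at the strictly semistable orbit $(x_0x_1x_2)^2$ (where the auxiliary line separates the collapsed orbits, as in \S\ref{sectexample}), and only up to finite stabilizers, a caveat your argument drops. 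Finally, the Kulikov-model/twist analysis in your last paragraph addresses only the generic point of the Type III fiber ($H$ away from the nodes); the locus where $H$ meets a node, i.e.\ the passage to $\III_\alpha$ and $\III_\zeta$, is exactly where the remaining work lies.
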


\subsubsection{GIT Model} The underlying surface $\overline{X}_0$ is 
$$z^2=f_3(x_i)^2$$ 
for a smooth plane cubic $f_3$. The normalization of $\overline{X}_0$ is two copies of $\bP^2$ with the double curve being the elliptic curve $E=V(f_3)$. Since the plane sextic $f_3^2$ is stable and we are working with $0<\epsilon\ll 1$ linearization, the choice of polarizing divisor is irrelevant here. The same is true from the KSBA perspective, as the polarizing divisor (a line in each copy of $\bP^2$) cannot have a common component with the double curve.

\subsubsection{Friedman's Model} The semi-stable surface $X_0=V_1\cup V_2$ is obtained  by gluing two copies of $\bP^2$ along an elliptic curve, and then blowing-up $18$ times  along the elliptic curve. The polarization is the pullback to $V_i$ of a line in $\bP^2$. Thus, the relative log canonical model $\overline{X}_0$ is obtained by contracting all these $(-1)$-curves, and coincides to the GIT model. Note also that in this situation the polarizations $L_i$ on the components $(V_i,D_i)$ satisfy $L_i^2=1$ and $L_i.D_i=3$. Thus, no twist is possible (compare Lemma \ref{lemtwist}).

\subsection{Case $E_7+D_{10}$,   (\cite[(5.2.3)]{friedmanannals}, \cite[Thm. 2.4 (II.2)]{shah}, \cite[Table 1 (II.0h), (II.1)]{thompson})}\label{sectz2}
 In this case, the corresponding GIT stratum is $Z_2$. Again $\widehat\calM$ and $(\calD/\Gamma_2)^*$ agree over this stratum. Also, $\widehat \calP_2$ and $\overline \calP_2$ agree over the preimage of this stratum. However, in contrast to the previous case, $\widehat \calP_2\to \widehat\calM$ is not a $\bP^2$-bundle over $Z_2$. Here the choice of polarizing divisor instead of line bundle is essential: without a divisor one only gets strictly semistable points, in contrast when the divisor is considered all the pairs are either stable or unstable (in a GIT sense, but this coincides with the KSBA stability here). The analysis of the models associated to this stratum, gives the following result:

\begin{proposition}\label{poverz2}
The fiber over a point of the boundary component $\II_{E_7+D_{10}}$ consists of the closure of the following two strata:
\begin{itemize}
\item[(A)] a $9$-dimensional stratum parameterizing triples $(V,D;H)$ where $V$ is a degree $2$ del Pezzo surface, $D,H\in |-K_V|$, $D\cong E$ is a fixed elliptic curve, and $H\neq D$. 
\item[(B)] a $12$-dimensional component  parameterizing surfaces that are double covers of $\bP^2$ branched in a reduced sextic with an $\Es$ singularity and a hyperplane section not passing through the singularity;
\end{itemize}
The closure of each of these components is obtained by adding a rational curve, which is common to both. The gluing curve parameterizes pairs 
$(X,H)$  with $X$ a minimal elliptic ruled surface with a section of self-intersection $2$ and another section collapsed to a $\Es$ singularity, and a divisor $H\in |\sigma+2f|$ (where $\sigma$ is the class of the $(-2)$ section, and $f$ the class of a fiber). 
\end{proposition}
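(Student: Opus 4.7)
The plan is to combine the GIT stability analysis of Proposition \ref{proppairz2} for the stratum $Z_2 \subset \widehat\calM$ with Friedman's Type II$_{E_7+D_{10}}$ semistable model \cite[(5.2.3)]{friedmanannals}, the $0$-surface classification of Proposition \ref{classifydeg2}, and the twist operation of Remark \ref{rembasechange}. By Theorem \ref{thmlooijenga} the map $\widehat\calM\to(\calD/\Gamma_2)^*$ sends $\widehat{Z}_2$ onto $\II_{E_7+D_{10}}$, so analyzing $\overline\calP_2$ over this component reduces to analyzing pairs over $Z_2$. By Remark \ref{remjinv}(i), a generic $C\in Z_2$ is either a reduced sextic with an $\Es$ singularity or a double line with transverse residual quartic; these two geometries collide at the minimal orbit of $Z_2$.

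First, case (B) arises from stable GIT orbits directly: if $C$ has an $\Es$ singularity at $p$ and the line $L$ avoids $p$, Proposition \ref{proppairz2} gives $\epsilon$-GIT stability of $(C,L)$, and Corollary \ref{cornonunigonal} upgrades this to $\epsilon$-KSBA stability of the associated double cover $(X,H)$. Counting moduli of sextics with one $\Es$ singularity up to $\PGL(3)$ and of the general line $L$ not through the singularity yields dimension $12$.

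Next, case (A) arises from strictly semistable orbits via twist. For a $1$-parameter family whose central fiber is $\epsilon$-GIT semistable but not KSBA stable --- either an $\Es$-singular sextic with $L$ passing through the singularity, or the double-line-plus-tangent-quartic configuration of Proposition \ref{proppairz2}(ii) --- the GIT limit $(X,\epsilon H)$ fails to be slc since $\calH$ meets a log canonical center. Following the proof of Theorem \ref{thmksba}, a base change and a twist by a component of the central fiber produces a $0$-surface in the new limit. Proposition \ref{classifydeg2}(4) forces this $0$-surface to be a degree $2$ del Pezzo $V$ with $L \sim -K_V \sim D$; matching with Friedman's model gives $X^\nu = V$ with double curve the anticanonical section $D\cong E$, and $H\in |-K_V|\setminus\{D\}$. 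A Pinkham--Looijenga-type analysis (analogous to Lemma \ref{modulipairs}, using the versal deformation of $\Es$ modulo $\bG_m$) yields $7$ moduli for $(V,D)$ with $D\cong E$ fixed, and $H\in |-K_V|\cong \bP^2$ contributes $2$, for a total of $9$.

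Finally, the gluing curve emerges from a common further degeneration: specializing $H\to D$ in (A), or letting $H$ in (B) pass through the $\Es$ singularity, produces a non-slc pair whose KSBA replacement (Theorem \ref{thmksba} and Remark \ref{rembasechange}) is an irreducible minimal elliptic ruled surface $V\cong \bP(\calO_E\oplus\calL)$ with $\deg\calL=2$: its two disjoint sections have self-intersections $\pm 2$, the $(-2)$-section contracts to an $\Es$ singularity, and the polarization descends to $\sigma+2f$. The residual moduli is the $\bP^1$-family $|\sigma+2f|/\bG_m$ (after normalizing by the natural $\bG_m$-scaling on $\calO_E\oplus\calL$), giving a rational curve common to both closures. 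The main obstacle is to control the twist precisely, so that the closures of (A) and (B) meet along exactly this rational curve and not a larger locus; this should follow from the uniqueness of the KSBA limit in Theorem \ref{thmksba} together with the classification of polarized $0$-surfaces in Proposition \ref{classifydeg2}, which leaves no other candidate gluing component available.
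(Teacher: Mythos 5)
Your treatment of stratum (B), the identification of the relevant GIT stratum $Z_2$, and the dimension counts are fine, but your derivation of stratum (A) contains a genuine error. You claim that the degree $2$ del Pezzo pairs arise as KSBA replacements, via base change and twist, of GIT limits that are ``$\epsilon$-GIT semistable but not KSBA stable''. In fact the generic point of (A) is realized directly by a GIT \emph{stable} pair: take $C=2L_0+Q$ with $Q$ a quartic meeting the double line $L_0$ transversally and $L$ a generic line; by Proposition \ref{proppairz2} such $(C,L)$ is $\epsilon$-stable and $\epsilon$-KSBA stable, and the (non-normal) double cover has normalization a degree $2$ del Pezzo with double curve $D\cong E$ and $H\in|-K_V|\setminus\{D\}$. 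This is the mechanism in the paper, and it is structurally important: by Theorem \ref{thmgitpair} and Theorem \ref{thmflip}, $\overline\calP_2$ and $\widehat\calP_2$ agree over the $Z_2$ stratum (the only flip occurs over $\widetilde Z_1$, i.e.\ the $2E_8+A_1$ stratum), whereas your account would force a flip-like modification over $Z_2$ as well. Two further inaccuracies in the same step: the configuration ``$\Es$-sextic with $L$ through the singularity'' is $\epsilon$-GIT \emph{unstable} by Proposition \ref{proppairz2}(i), not strictly semistable; and Proposition \ref{classifydeg2}(4) does not ``force'' the new $0$-surface to be a degree $2$ del Pezzo, since case (6) and the elliptic ruled degenerations of Lemma \ref{partialsmooth} are equally available with $L^2=2$.

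Indeed, when the polarizing divisor hits the log canonical center (either $H\to D$ in (A), or $H$ through the $\Es$ point in (B)), the base change refines the Kulikov model to $V_1\cup V\cup V_2$ and the twist of Remark \ref{rembasechange} moves the polarization onto the middle elliptic ruled component $V$ (lift $(0,2,0)$ rather than $(2,0,0)$ or $(0,0,2)$); the resulting log canonical model is generically the elliptic ruled surface with sections of self-intersection $\pm 2$ and polarization $\sigma+2f$, i.e.\ a point of the gluing curve (C), not of (A). Your final paragraph on the gluing curve actually asserts exactly this, so your proposal is internally inconsistent: the same replacement procedure cannot yield a del Pezzo pair in the second paragraph and an elliptic ruled pair in the last. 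The correct picture, which also settles your worry that the closures might meet in a larger locus, is that (A), (B), and (C) correspond to the three lifts $(2,0,0)$, $(0,0,2)$, $(0,2,0)$ of the polarization to the semistable model $V_1\cup V\cup V_2$, related by twists, with (C) the common specialization; the GIT analysis over $Z_2$ (where all pairs are stable or unstable except along the small tangency locus of Proposition \ref{proppairz2}(ii)) then gives the precise strata, their dimensions $9$, $12$, and the one-dimensional gluing locus.
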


\subsubsection{GIT Model} The minimal orbits corresponding to points of $Z_2$ are given by $x_0^2f_4(x_1,x_2)$ for a binary quartic with distinct roots. One distinguishes three distinct geometric possibilities:
\begin{itemize}
\item[(A)] a sextic containing a double line: the normalized double cover will be a degree two del Pezzo, and the line will give (as the double curve of the normalization) the anticanonical section $D$;
\item[(B)] a reduced sextic with a unique  $\widetilde E_7$ singularity;
\item[(C)] a sextic with both a $\widetilde E_7$ and  a double line. \end{itemize}

When considering additionally a hyperplane section (i.e. passing from $\widehat{\calM}$ to $\widehat{\calP}$), the orbits become separated as in the toy example of \S\ref{sectexample}.  As discussed in Proposition \ref{proppairz2}, the restrictions for the hyperplane section are not to pass through the $\Es$ singularity or to coincide with the double line. A simple dimension count (for a fixed $j$-invariant) gives the dimensions of the proposition. For example, the moduli of degree $2$ del Pezzo surfaces containing a fixed elliptic curve $E$ is $7$-dimensional and isomorphic to the weighted projective space  $\left(E\otimes_\bZ E_7\right)/W(E_7)\cong \bP^7(1,1,2,2,2,3,3,4)$.  The choice of polarizing divisor gives $2$ additional dimensions. 

Finally, case (C) is a specialization of both (A) and (B). The double cover associated to the sextic of case (C) is, after normalization and resolution of the $\Es$ singularity, a minimal elliptic ruled surface with marked sections of self-intersection $2$ and $-2$. Because, of the $\bC^*$ stabilizer in case (C), there is (up to isomorphism) only a $1$-dimensional choice for the  hyperplane section $H$. Explicitly, $X$ is the normalization of the double cover $z^2=x_0^2f_4(x_1,x_2)$ and $H$ is the pullback of the line $L=V(x_0+bx_1+cx_2)$ in $\bP^2$ and the modulus is given by $(b:c)\in \bP^1$. 

\subsubsection{Friedman's Model} The semistable model in this case $X_0=V_1\cup V_2$ gives after the contraction of the $(-1)$-curves orthogonal to the polarization the following two relatively minimal polarized anticanonical surfaces: $(\oV_1,\oD_1;\oL_1)$  a degree $2$ del Pezzo with $\oD_1,\oL_1\in |-K_{\oV_1}|$, and $(\oV_2,\oD_2;\oL_2)\cong (\bF_1, 2\sigma+4f;f)$. Note that to get the semi-stable model $X_0$ starting from $\oV_1\cup \oV_2$ one needs $10$ additional blowups, but these are irrelevant from the perspective of the relative log canonical model. Also note, that in this model $V_1$ is $0$-surface, and $V_2$ is a $1$-surface. Thus, the central fiber of the relative log canonical model will be just $\oV_1$ with $\oD_1$ marked. This is precisely  case (A) from above. 

\begin{figure}[htb!]
\includegraphics[scale=0.7]{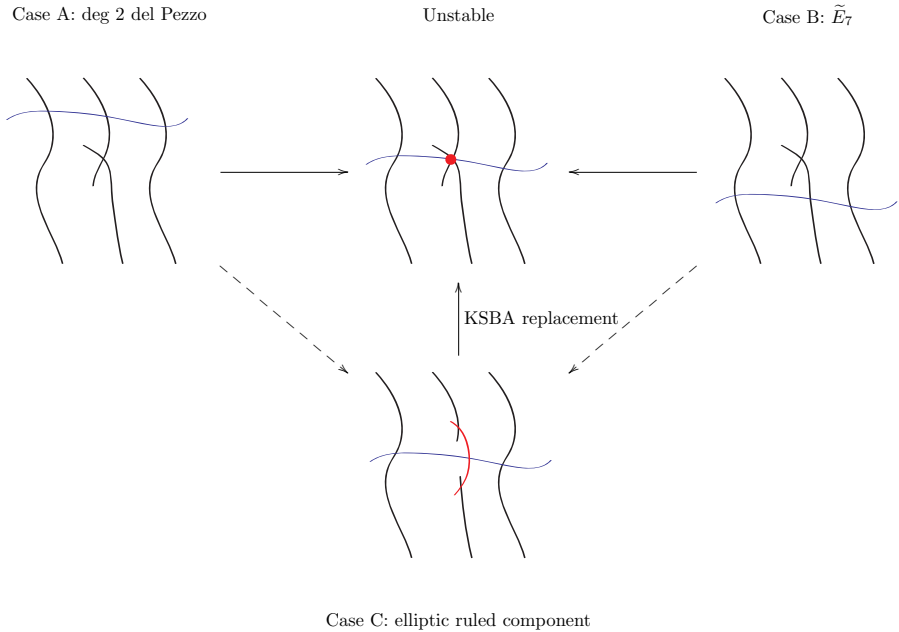} 
\caption{The degenerations in the $E_7+D_{10}$ case}\label{figtwist}
\end{figure}

It is easy to understand how the other models occur:  when trying to compactify the moduli of pairs, one has to replace the case $\oH_1=\oD_1$ (the polarizing divisor coincides with the double curve) by a slc model.  This is achieved by replacing the semistable model $V_1\cup V_2$ by a model $V_1\cup V\cup V_2$ where $V$ is a minimal elliptic ruled surface glued along two sections: $D_{1}$ of self-intersection $-2$ and $D_{2}$ of self-intersection $2$. When $H_1=D_1$ occurs, one applies the twist, and moves the polarization from $V_1$ to $V$ (see Figure \ref{figtwist}).  

Finally, note that the semistable model $X_0=V_1\cup V\cup V_2$ dominates all the GIT models (A), (B), and (C). The difference between the three cases is given by the lift of the polarization $\calL^*$ in a one parameter degeneration $\sX/\Delta$ from the generic fiber to the family.  Explicitly, encoding the lift $\calL$ by the degrees on the components of $X_0$, we get
\begin{itemize}
\item $(2,0,0)$ corresponds to (A) (degree $2$ del Pezzo);
\item $(0,2,0)$ corresponds to (C) (elliptic ruled with a section collapsed to an $\Es$ singularity);
\item $(0,0,2)$ corresponds to (B) (rational with an $\Es$ singularity). 
\end{itemize}
Note that the lifts of $\calL$ are related by the twist operation.

\begin{remark}[$\overline{\calP}_d$ vs. partial toroidal compactifications]\label{remtoroidal}
The deformation theory of a semistable model $X_0$ is well understood:  one can obtain a partial compactification of the moduli of $K3$ surfaces by adding a divisor parameterizing semistable models of a fixed combinatorial type and satisfying the {\it $d$-semi-stablity condition} of Friedman (see  \cite[\S4]{friedmanannals}, \cite{friedmansmooth}, \cite{kn}, and \cite{olsson}). On the other hand, for Type II boundary components (for Type III see \cite{friedmanscattone}), it is known that there exists a unique toroidal compactification over a Type II boundary stratum in $(\calD/\Gamma_d)^*$. One can show that the geometric divisor obtained by local trivial deformations of semistable models and the toroidal divisor can be identified via the theory of degenerations of Hodge structures (this is essentially the content of \cite{friedmanannals}). However, the issue is that, in polarized case, there are several possible liftings of the polarization. Thus one obtains several boundary divisors ($3$ in the example above) which are not distinguishable at the level of Hodge theory. The choice of polarizing divisor (giving $\bP^g$-bundles over these boundary divisors) allows one to glue the various boundary divisors. In the $\bP^g$-bundle $\overline{\calP}_d$ over $\calF_d$ these divisors are contracted to smaller dimensional components.

For instance, in the $E_7+D_{10}$ case discussed above, the deformation theory for $X_0=V_1\cup V\cup V_2$ will give a boundary divisor which is essentially a $W\bP^7\times W\bP^{10}$ fibration (coming from $(E_7\otimes E)/W(E_7)$ and $(D_{10}\otimes E)/W(D_{10})$ respectively) over the $j$-line. The choice of the lifting the polarization gives $3$ copies of this divisor, say $\Delta_{(2,0,0)}$, $\Delta_{(0,2,0)}$, and $\Delta_{(0,0,2)}$ (N.B. these can be viewed as divisors in a partial non-separated compactification of $\calF_2$, compare \cite{olsson}). The choice of a polarizing divisor $H$ gives $\bP^2$-bundles, say $\widetilde\Delta_{(2,0,0)}\to \Delta_{(2,0,0)}$, over each of these copies (N.B. $\widetilde\Delta_{(2,0,0)}$ can be viewed as divisors in a partial compactification of $\calP_2$). Then, the case $H_1=D_1$ can be viewed as giving a gluing of the copy $\widetilde\Delta_{(2,0,0)}$ with the $\widetilde\Delta_{(0,2,0)}$ copy; and similar gluing for the divisor corresponding to $(0,2,0)$ and $(0,0,2)$. Thus at the level of pairs, it is possible to give a partial toroidal like compactification for $\calP_2$ by adding the divisors $\widetilde\Delta_{(*,*,*)}$. Finally, in $\overline{\calP}_d$ these divisors will be collapsed to three smaller dimensional strata (e.g. in the $(2,0,0)$ case the $\bP^2$-bundle $\widetilde\Delta_{(0,2,0)}$ over $W\bP^{7}\times W\bP^{10}$ will be collapsed to a $\bP^2$-bundle over $W\bP^{7}$, giving the $9$-dimensional stratum (A)). 
\end{remark}

\subsection{Case $D_{16}+A_1$ (\cite[(5.2.4)]{friedmanannals},  \cite[Thm. 2.4 (II.2)]{shah}, \cite[Table 1 (II.2)]{thompson})}\label{sectz3}
 This case is quite similar to the $A_{17}$ case: both $Z_3$ (this case) and $Z_4$ correspond to  stable GIT loci. We note first that $\widehat \calM\to (\calD/\Gamma)^*$ is a $\bP^1$-bundle over the component $\II_{D_{16}+A_1}$. Specifically, $  \widehat Z_3\setminus\hat\tau \to \II_{D_{16}+A_1}\cong \bA^1$ is a $\bP^1$-bundle, the map being given by the $j$-invariant (compare Remark \ref{remjinv}).  This corresponds to the following geometric fact:

\begin{lemma}\label{lemmad16}
The choice of point in $\widehat Z_3\setminus\hat\tau$ corresponds to the choice of an elliptic normal curve of degree $4$ in $\bP^3$ together with a quadric containing it. 
\end{lemma}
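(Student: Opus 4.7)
My plan is to establish the correspondence by constructing maps in both directions geometrically, via the normalization of the double cover (forward) and via projection (backward), and then matching the resulting $\bP^1$-fibration structures.

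First I would recall that a generic point of $\widehat Z_3 \setminus \hat\tau$ represents, via the GIT description (see Thm.\ \ref{thmshah1} and Prop.\ \ref{proppairz1}), a $\PGL(3)$-orbit of sextics of the form $C = 2Q_1 + Q_2 = V(f_1^2 f_2)$, where $Q_i = V(f_i)$ are smooth plane conics meeting transversally in four distinct points. The associated double cover $X = \{z^2 = f_1^2 f_2\}$ is non-normal along $\{z = 0,\ f_1 = 0\}$; introducing $w = z/f_1$ shows that the normalization is
\[
X^\nu = Q = \{w^2 = f_2(x_0,x_1,x_2)\} \subset \bP^3,
\]
a smooth quadric with its standard hyperplane polarization. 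The preimage of $Q_1$ in $X^\nu$ is the curve $E = \{f_1 = 0\} \cap Q$, a complete intersection of two quadrics in $\bP^3$, and hence an elliptic normal quartic contained in $Q$. This assignment is clearly canonical at the level of orbits, and produces a natural map $\widehat Z_3 \setminus \hat\tau \to \{(E \subset Q \subset \bP^3)\}/\PGL(4)$.

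For the inverse, given such a pair $(E, Q)$, I would reconstruct the sextic by choosing a point $p \in \bP^3 \setminus Q$ and projecting $Q$ from $p$ onto a plane $\bP^2 \subset \bP^3$. This realizes $Q$ as a double cover of $\bP^2$ with branch locus the polar conic $Q_2$ of $p$ with respect to $Q$, while $E$ maps to a smooth conic $Q_1 = \pi(E)$, since $E$ is a $(2,2)$-curve on $Q \cong \bP^1 \times \bP^1$ and each fiber of $\pi$ meets $E$ in two points conjugate under the covering involution, so $\pi|_E$ is $2{:}1$. The resulting sextic $2Q_1 + Q_2$ recovers the original $K3$ pair. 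I would then match the $\bP^1$-fibration structures on both sides: by Rem.\ \ref{remjinv} the map $\widehat Z_3 \setminus \hat\tau \to \II_{D_{16}+A_1}\cong\bA^1$ is given by $j(E)$, while on the pair side the quadrics containing a fixed $E$ form the pencil $\bP(H^0(\calI_E(2))) \cong \bP^1$ (since $h^0(\calI_E(2)) = 10 - h^0(\calO_E(2)) = 10 - 8 = 2$), and the two $\bP^1$-bundle structures agree under the correspondence.

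The main obstacle I anticipate is the well-definedness of the inverse on $\PGL(4)$-equivalence classes: different projection points $p \in \bP^3 \setminus Q$ naively yield different sextics in $\bP^2$, so one has to show these lie in a single $\PGL(3)$-orbit. The key observation here is that $\Stab_{\PGL(4)}(Q) \cong \Aut(Q)$ acts transitively on $\bP^3 \setminus Q$, so after absorbing changes of $p$ into automorphisms of $Q$ and reparameterizations of the target plane, the sextic depends only on $(E, Q)$ modulo $\Stab_{\PGL(4)}(E, Q)$. Combined with the matching of $\bP^1$-bundle structures above and the irreducibility of both sides, this yields the claimed bijective correspondence generically, completing the identification.
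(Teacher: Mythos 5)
Your forward construction (normalize $z^2=f_1^2f_2$ to get the quadric $\{w^2=f_2\}\subset\bP^3$ with double curve $E=V(f_1)\cap Q$ an elliptic normal quartic) is exactly the paper's argument, which in fact treats this as the whole proof; note, though, that the paper also covers the unigonal stratum $U_3\subset \widehat Z_3\setminus\hat\tau$, where the quadric is a quadric cone (double cover of $\Sigma_4^0$), and allows $f_2$ merely reduced, whereas you restrict to the generic non-unigonal point with both conics smooth.

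The genuine problem is your inverse construction. For an \emph{arbitrary} point $p\in\bP^3\setminus Q$ it is false that each fiber of the projection $\pi:Q\to\bP^2$ meets $E$ in a conjugate pair: that statement is equivalent to $E$ being invariant under the covering involution $\iota_p$, which holds precisely when the cone over $\pi(E)$ with vertex $p$ is a quadric containing $E$, i.e.\ when $p$ is the vertex of one of the (generically four) singular members of the pencil $|\calI_E(2)|$. Being a $(2,2)$-curve gives no such invariance for general $p$; in that case $\pi|_E$ is birational onto a nodal plane quartic and the image configuration is not of the form $2Q_1+Q_2$ at all, so no point of $\widehat Z_3$ is produced. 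The transitivity of $\Aut(Q)$ on $\bP^3\setminus Q$ does not repair this: absorbing a change of projection point into $g\in\Aut(Q)$ replaces $E$ by $g(E)$, so it says nothing about independence of $p$ for the \emph{fixed} pair $(E,Q)$. The correct inverse takes $p$ to be a cone vertex of the pencil through $E$ (equivalently, writes $Q$ as $\{z^2=q\}$ with $E$ cut out by a quadric cone), recovering the sextic as $q_0^2q$ with $q_0=\pi(E)$ and $q$ the branch conic; one must then either check that the finitely many admissible vertices yield $\PGL(3)$-equivalent sextics or, as the paper does, avoid the issue by proving only the forward identification and reading off the fibration structure from the pencil of conics through the four branch points (Rem.~\ref{remjinv}). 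As written, your backward map and its well-definedness argument do not establish the correspondence.
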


The points of $\widehat Z_3\setminus\hat\tau$ are stable GIT points, by construction it follows that $\widehat\calP_2\to \widehat\calM$ is a $\bP^2$-bundle over this locus. Finally, $\overline \calP_2$ and $\widehat\calP_2$ agree here (the preimage of $\widehat Z_3\setminus\hat\tau$ is away from the flip locus). We conclude: 
\begin{proposition}\label{poverz3}
Over the component $\II_{D_{16}+A_1}$, $\overline \calP_2\to (\calD/\Gamma)^*$ is a $\bP^2\times \bP^1$-bundle.
\end{proposition}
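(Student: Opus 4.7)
The strategy is to factor $\overline{\calP}_2 \to (\calD/\Gamma_2)^*$ as a composition of two bundle maps, using the two fibrations already established in the paper: $\widehat{\calM} \to (\calD/\Gamma_2)^*$ for the base, and $\widehat{\calP}_2 \to \widehat{\calM}$ for the total space, and then to identify $\overline{\calP}_2$ with $\widehat{\calP}_2$ on the relevant open set. The plan has three steps.

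First, I would establish the $\bP^1$-bundle structure on the base. By Remark \ref{remjinv}(ii), the $j$-invariant gives a regular map $\widehat{Z}_3 \to \overline{\II}_{D_{16}+A_1} \cong \bP^1$ which is a $\bP^1$-fibration; restricting to the open stratum $\II_{D_{16}+A_1} \cong \bA^1$ and invoking Lemma \ref{lemmad16}, each fiber is identified geometrically with the pencil of quadrics in $\bP^3$ containing a fixed elliptic normal quartic of the prescribed $j$-invariant. So $\widehat{\calM} \to (\calD/\Gamma_2)^*$ is a $\bP^1$-bundle (up to finite stabilizers) over $\II_{D_{16}+A_1}$.

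Next, I would pass from $\widehat{\calM}$ to $\widehat{\calP}_2$. Since every point of $Z_3$ is GIT-stable (Theorem \ref{thmshah1} and the subsequent remark), the VGIT construction of Section \ref{sectvgit} forces $\widehat{\calP}_2 \to \widehat{\calM}$ to be a $\bP^2$-bundle over $\widehat{Z}_3 \setminus \hat{\tau}$: if $C$ is GIT-stable, then $(C,L)$ is $\epsilon$-stable for every line $L$, so the whole linear system $\bP^2 = |\calO_{\bP^2}(1)|$ of hyperplanes contributes to the fiber. The strictly semistable analysis of Proposition \ref{proppairz2} does not intervene here.

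Third, I would identify $\overline{\calP}_2$ with $\widehat{\calP}_2$ on the preimage of $\II_{D_{16}+A_1}$. By Theorem \ref{thmflip} the birational map $\overline{\calP}_2 \dashrightarrow \widehat{\calP}_2$ is an isomorphism away from $\widetilde{Z}_1$, and by Theorem \ref{thmgitpair}(iii) the locus $\widetilde{Z}_1$ maps onto $\widehat{Z}_1 \subset \widehat{\calM}$, which in turn (Figure \ref{fig4}) lies over the Type II component $\II_{2E_8+A_1}$, disjoint from $\II_{D_{16}+A_1}$. Hence over the preimage of $\II_{D_{16}+A_1}$ the two compactifications agree, and composing the two bundle maps yields that $\overline{\calP}_2 \to (\calD/\Gamma_2)^*$ is a $\bP^2 \times \bP^1$-bundle (up to finite stabilizers) as claimed. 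The only mild obstacle is bookkeeping of finite stabilizer groups coming from the Baily--Borel and GIT quotients, but these are orthogonal to the fibration structure (they act on the base parameter $j$, not on the pencil of quadrics or the choice of hyperplane) and do not affect the fiber.
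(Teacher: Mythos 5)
Your proposal is correct and follows essentially the same route as the paper: the $\bP^1$-bundle structure of $\widehat{\calM}\to(\calD/\Gamma_2)^*$ over $\II_{D_{16}+A_1}$ via the $j$-invariant and Lemma \ref{lemmad16}, the $\bP^2$-bundle structure of $\widehat{\calP}_2\to\widehat{\calM}$ over $\widehat{Z}_3\setminus\hat\tau$ coming from GIT stability of those points, and the identification $\overline{\calP}_2\cong\widehat{\calP}_2$ there because the flip locus $\widetilde Z_1$ lies over $\overline{\II}_{2E_8+A_1}$ and hence misses the preimage of the open stratum $\II_{D_{16}+A_1}$. No essential difference from the argument given in \S\ref{sectz3}.
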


\subsubsection{The GIT Model} The equation of the sextic corresponding to this case is $q_0^2q$, with the conditions that $q_0$ is smooth, $q$ is reduced, and $q_0$ and $q$ intersect transversally. The normalization $V_1$ of the double cover $z^2=q_0^2q$ is the quadric surface $z^2=q$ in $\bP^3$. The double curve of the normalization will be elliptic curve $D_1$ which   is the double cover of the conic $V(q_0)$ branched at the $4$ intersection points. A similar picture holds also  in the unigonal case (i.e. $U_3\subset \widehat Z_3$). This concludes the proof of the Lemma \ref{lemmad16}. Note that in this case all the points are stable, thus this stratum is modular even without the choice of a divisor.

Finally, the hyperplane section is the pullback of a line in $\bP^2$. It is a hyperplane section of the quadric $V_1$ but it is not an arbitrary section, in fact it lies in a two dimensional  linear subsystem characterized by the property that it cuts the elliptic curve $D_1$ in two conjugate points. This somewhat surprising fact is explained by the analysis of the semi-stable model below. 

\subsubsection{Friedman's Model} The relatively minimal models of the two components in this case are: $(\oV_1,\oD_1;\oH_1)=(\bF_0, 2f_1+2f_2;f_1+f_2)$, $(\oV_2,\oD_2;\oH_2)=(\bF_1,2\sigma+4f;2f)$. Note that $H_1^2=2$, $H_2^2=0$, $H_1.D_1=H_2.D_2=4$. Since $H_1.D_1>H_1^2$, it follows that no twisting is possible. This means that in contrast to the $E_7+D_{10}$ case there is only one model. Note that the condition on the hyperplane section noted in the previous paragraph (i.e. $H_1$ cuts two conjugate pairs of points on the elliptic double curve $D_1$) is imposed by the requirement of extending the polarization to the second component (even though this component is a $1$-surface, which is contracted to the double curve in the log canonical model).

Abstractly, this case corresponds to the case of polarized anticanonical pairs $(V,D;L)$ with $L^2=2$ and $L.D=4$. From Proposition \ref{propclassify}, we know that $V$ has to be a scroll and in fact a quadric surface in $\bP^3$. The results of Harbourne (e.g. Thm. \ref{thmldn0}) say that the linear system $|L|$ is base point free. Thus, the occurrence of the unigonal case might seem contradictory. The resolution of this apparent contradiction was given above: the allowed polarizing divisors are members of a linear subsystem of $|L|$.  

\subsection{Case $2E_8+ A_1$ (\cite[(5.2.2)]{friedmanannals},  \cite[Thm. 2.4 (II.1)]{shah},  \cite[Table 1 (II.0h)]{thompson},  \cite[Table 2 (II.0u)]{thompson},  \cite[Table 2 (II.4)]{thompson})}\label{sectz1} 
This case is the most involved one. Specifically, on the GIT side this corresponds to the stratum $\widehat Z_1$, parameterizing curves with $\Ee$ singularities; these are strictly semi-stable points. When we consider the polarizing divisor, the orbits become stable if the divisor doesn't pass through the $\Ee$ singularity. If it passes through the singularity, we obtain a strictly semistable object, which will be replaced by the flip discussed in Section \ref{sectslclimit} by the case of two components (which both of them have to be del Pezzo of degree 1). We conclude:
\begin{proposition}\label{poverz1}
The fiber in $\overline\calP_2$ over a point in   $\II_{2E_8+A_1}\subset (\calD/\Gamma)^*$ consists of two components:
\begin{itemize}
\item[(A)] A component of dimension $18$ parameterizing $(X,H)$ with $X=V_1\cup_E V_2$, with both $V_i$ being degree $1$ del Pezzo surfaces glued along an elliptic curve $E$ (such that  the base points $p_i\in E$ of the anticanonical systems are matched).  This case can further degenerate to cases (C) and (D) (where one or both of $V_i$ degenerate to elliptic ruled surfaces with $\Ee$ singularities).  
\item[(B)] A component of dimension $11$ parameterizing rational surfaces with $\Ee$ singularities together with hyperplane sections not passing through the singularities. This can further degenerate to case (E) (i.e. elliptic ruled surfaces with two $\Ee$ singularities). 
\end{itemize}
The two components are glued along the $9$-dimensional (closure of the) stratum (C) (see Figure \ref{fig2e8b}). 
\end{proposition}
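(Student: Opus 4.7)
The plan is to combine the GIT-to-KSBA flip structure of Theorem \ref{thmflip} with the classification of polarized anticanonical $0$-surfaces in Proposition \ref{classifydeg2}, using the identification of the Shah stratum $\widehat Z_1 \subset \widehat \calM$ with the Baily--Borel boundary component $\II_{2E_8+A_1}$ from Theorem \ref{thmlooijenga}. Proposition \ref{classifydeg2} narrows the generic possibilities for the irreducible components of a KSBA stable pair mapping to this stratum to precisely cases (2) and (5), which correspond to (A) and (B); it then remains to describe the resulting moduli (with dimensions) and the common boundary.

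For (A), Theorem \ref{thmflip} produces exactly the boundary stratum $\Delta_{E_8^2} \subset \overline\calP_2$ consisting of pairs $X = V_1 \cup_E V_2$ with $V_i$ degree $1$ del Pezzo surfaces. Its fiber over a point of $\II_{2E_8+A_1}$ (i.e.\ for fixed $E$) is computed by Corollary \ref{cor2dp1} to be $\bP(1,1,2,2,3,3,4,4,5,6) \times \bP(1,1,2,2,3,3,4,4,5,6)$, of dimension $18$. For (B), by Corollary \ref{cornonunigonal} the pairs over $\widehat Z_1 \setminus \widetilde Z_1$ are both $\epsilon$-GIT and $\epsilon$-KSBA stable, and Proposition \ref{proppairz1} identifies the underlying sextics as the ones with an $\Ee$ (or more degenerate $T_{2,3,r}$ or degenerate cusp) singularity off the polarizing line. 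A count analogous to Lemma \ref{modulipairs} for anticanonical pairs $(V,D)$ with $K_V^2=-1$ and $D\cong E$ fixed, combined with the two-dimensional linear system $|H|$ coming from Proposition \ref{proprr}(b), then gives fiber dimension $11$.

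For the gluing along (C), I would appeal to the semistable reduction analysis of \S\ref{semistablereplace} and the Kirwan blow-up structure of \S\ref{kirwanp2}. The mechanism is the ``twist'' operation of Remark \ref{rembasechange}: from the (A) side the limit is reached when one del Pezzo $V_i$ degenerates to the cone over $E$ (acquiring an $\Ee$ at the vertex), while from the (B) side one approaches the limit through case (E), where the rational surface acquires a second $\Ee$ via the partial smoothing of Lemma \ref{partialsmooth}. Both loci arise as contractions of the common exceptional divisor $\Delta_{2E_8+A_1} \subset \widetilde\calP$ in diagram \eqref{diagflip}. The hardest step will be verifying that this set-theoretic gluing is compatible with the KSBA moduli-theoretic structure on $\overline\calP_2$: in practice this reduces to a local deformation-theoretic check via Luna's slice theorem, parallel to the construction of $\widehat\calP_2$ in \S\ref{sectvgit}.
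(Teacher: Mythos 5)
Your overall route is the paper's: the GIT analysis over the stratum $\widehat Z_1$ (Prop.~\ref{proppairz1}, Cor.~\ref{cornonunigonal}), the flip of Theorem~\ref{thmflip} together with Cor.~\ref{cor2dp1} for the $18$-dimensional component (A), Proposition~\ref{classifydeg2} combined with Friedman's model to identify the two geometric possibilities, and the Kirwan/Luna analysis of \S\ref{kirwanp2} for the gluing. Two steps, however, are not right as written. First, the dimension count for (B): the moduli of anticanonical pairs $(V,D)$ with $K_V^2=-1$ and $D\cong E$ fixed is $10$-dimensional (ten points on the cubic $E$, i.e.\ $(E\otimes_\bZ \Lambda)/W$ for a rank $10$ lattice), so your ``pairs plus the $2$-dimensional system $|H|$'' scheme gives $12$, not $11$. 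The missing condition is the polarization: a (B)-pair is the contraction of a \emph{polarized} triple $(V,D;L)$ with $L^2=2$, $L.D=0$, and since $H$ is Cartier on the contracted surface and misses the singularity, $\calO_V(L)\otimes\calO_D$ must be trivial (Thm.~\ref{thmld0}(i)); this cuts the $10$-dimensional family down to $9$, and $9+2=11$. The paper sidesteps this by counting on the GIT side: sextics with a unique $\widetilde E_8$ singularity depend on $10$ moduli, one of which is $j$, plus $2$ for the line.

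Second, the gluing from the (B) side does not go ``through case (E)''. The stratum (E) (elliptic ruled surfaces with two $\Ee$ singularities) is only $2$-dimensional (one-dimensional in a fixed-$j$ fiber), so its closure cannot account for the $9$-dimensional gluing stratum (C), and its surfaces are of a different type than the cone-plus-del-Pezzo pairs of (C). The actual mechanism, which is exactly what the Luna-slice computation of \S\ref{kirwanp2} and Lemma~\ref{modulipairs} encode, is this: take a (B)-pair and move the polarizing line into the $\Ee$ singularity. In the local model $(\bC\times\bC^9)/\bC^*$ for each $\Ee$, this approach has vanishing $\bC^9$-component for the \emph{surviving} singularity, so the limit in the corresponding factor $\bP(1,1,2,\dots,6)$ is precisely the cone point of Lemma~\ref{modulipairs}, while the other factor records a genuine degree $1$ del Pezzo; hence the limit lies in stratum (C), with no second $\Ee$ ever appearing. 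Your appeal to Lemma~\ref{partialsmooth} is only needed for the separate (and easier) incidence $(E)\subset\overline{(B)}$, not for the gluing of (A) and (B). With these two corrections your argument matches the paper's proof.
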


\begin{figure}[htb!]
\includegraphics[scale=0.65]{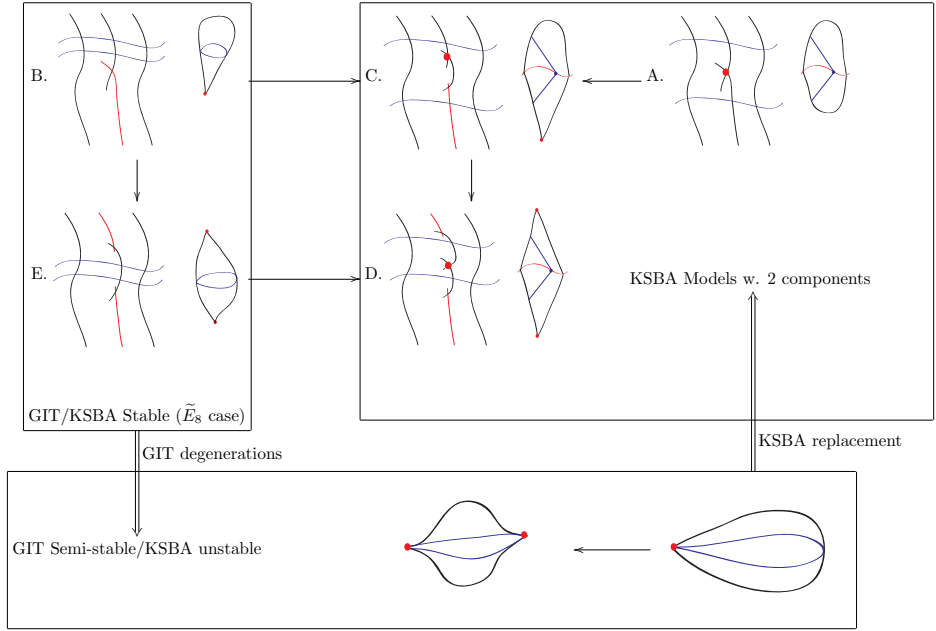} 
\caption{The degenerations in the $2E_{8}+A_1$ case}\label{fig2e8b}
\end{figure}

\begin{remark}
As already discussed in Section  \ref{sectslclimit}, one can be very precise about the structure of the various strata occurring in the proposition. For instance, the closure of the stratum (A) is the product of two weighted projective spaces $\bP^9(1,1,\dots,5)\times \bP^9(1,1,\dots,5)$ (see Cor. \ref{cor2dp1}).
\end{remark}

\subsubsection{GIT Model} Here we have several models. First, we have plane sextic curves with a unique $\widetilde E_8$ (depending on $10$ moduli, one of which is the $j$-invariant) and sextics with two $\widetilde E_8$ (depending on $2$ moduli) which are obtained from $q_1q_2 q_3$ with common axis. In the classification above, these cases correspond to (B) and (E). In  case (E), via a partial smoothing one obtains case (B) (see Lem. \ref{partialsmooth}). 

Next, we consider additionally the hyperplane section. If the hyperplane does not pass through the $\widetilde E_8$ singularity, then the resulting pair is both GIT and KSBA stable. If the hyperplane passes through the singularity, the pair is GIT semistable and slc unstable. By applying a semi-stable reduction as discussed in Section \ref{sectslclimit}, one obtains the case of two components which (unless are cones) have to be degree $1$ del Pezzo surfaces.

\subsubsection{Friedman's Model} Each of the two relatively minimal models $(\oV_i,\oD_i;\oH_i)$  of the components of \cite[(5.2.2)]{friedmanannals}  are degree $1$ del Pezzo surfaces with $\oD_i, \oH_i\in |-K_{\oV_i}|$. 
 This gives case (A) discussed above. As in the $E_7+D_{10}$ case, additional models can be obtained by base change and twisting. For instance, starting with  case (A), one needs to blow-up two more times to get a semi-stable model. Then applying a twist gives case (B): a single rational component $V_1$, which    is the blow-up of $10$ points on a cubic in $\bP^2$. Here, $H_1^2=2$ (and $H_2^2=0$), $H_1.D_1=0$, $D_1^2=-1$. By the results discussed in Section \ref{sectpolanti}, such a case either leads to a double cover of $\bP^2$ branched along a sextic with an $\widetilde E_8$ singularity if there is no fixed component, or to a unigonal type case (which corresponds to the $U_1\subset \widehat Z_1$ stratum in the GIT model). Finally, applying base changes to Friedman's model followed by twists leads to  cases (C), (D), and (E). It is interesting to note that all cases discussed in  \cite[p. 21]{thompson} occur in the $2E_8+A_1$ situation.


\section{Classification of Type III Degenerations}\label{secttype3}
 We now discuss the case of Type III degenerations. According to the classification given by Proposition \ref{classifydeg2}, every $0$-surface $(V,D;L)$ that occurs in a Type III degeneration has a partial smoothing to a Type II Case $(V',D';L')$, i.e. as polarized surfaces $(V,L)$ and $(V',L')$ are deformation equivalent, $D'$ is a smoothing of the cycle of rational curves $D$. Thus,

\begin{theorem}\label{thmtype3}
The Type III locus in $\overline{\calP}_2$ is the closure of the Type II locus, in the sense of taking the closure (as $j\to \infty$) of the fibrations  over the Type II  Baily--Borel boundary components in $(\calD/\Gamma_2)^*$. In particular, there are $6$  Type III boundary components $\III_i$ in  $\overline{\calP}_2$ of dimensions $2$, $18$, $3$, $9$, $11$, and $12$ as described in Table \ref{table1}. Each of these components is irreducible except $\III_3$ which splits into two irreducible components $\III_\gamma$ and $\III_\delta$.  The incidence relations of the Type III components is summarized in Table \ref{table2}.
\end{theorem}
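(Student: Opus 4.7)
The plan is to reduce the Type III classification to the Type II classification of Theorem \ref{thmtype2} by using the partial smoothing procedure of Lemma \ref{partialsmooth} and Proposition \ref{classifydeg2}, together with the limiting behavior of the $j$-invariant fibrations described in Remark \ref{remjinv}.

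First, I would show that every stable pair $(X,H)$ in the Type III locus of $\overline{\calP}_2$ arises as a limit of Type II pairs as $j\to\infty$. Concretely, by Theorem \ref{thmthompson} and the classification of $0$-surfaces in Proposition \ref{classifydeg2}, the central fiber $\overline X_0$ of a relative log canonical model of a degree $2$ degeneration must have each of its $0$-surfaces be one of the six types listed in Table \ref{table1}; in the Type III case the anticanonical divisor $D$ on each $0$-surface is a cycle of rational curves (or nodal irreducible when its length is $1$). Since each such $0$-surface $(V,D;L)$ with $D$ nodal admits (by Lemma \ref{partialsmooth} and direct inspection of cases (1)--(6)) a partial smoothing to a $0$-surface of the same numerical type with $D$ replaced by a smooth elliptic curve, the six Type III boundary components $\III_i$ lie in the closures of the corresponding Type II components $\II_i$. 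Together with the map $\overline\calP_2\to (\calD/\Gamma_2)^*$ of the Main Theorem, this identifies $\III_i$ with the fiber over the Type III point (the $j=\infty$ specialization) of each fibration $\II_i\to \bA^1_j$ described in Section \ref{secttype2}.

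Next, I would compute the dimensions by subtracting $1$ from the corresponding Type II dimensions (accounting for the loss of the $j$-modulus of $E$) and matching against Table \ref{table1}: cases (1), (2), (4), (5), (6) drop from $3,19,10,12,13$ to $2,18,9,11,12$. The subtle case is (3): here the Type II component $\II_3$ has dimension $4$, consisting of $(X,H)$ with $X^\nu$ a quadric in $\bP^3$ and double curve a smooth elliptic normal quartic $E$. As recorded in the introductory remark after Table \ref{table1}, the codimension-one degenerations of $E$ as an intersection of the quadric with another quadric are of two distinct flavors: either $E$ becomes an irreducible nodal quartic ($\III_\gamma$) or $E$ degenerates to a reducible curve $C_1\cup C_2$, a union of two hyperplane sections ($\III_\delta$). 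I would verify that these two degenerations lie in different irreducible components by exhibiting one-parameter families on each side that cannot be joined while keeping $(X,H)$ a stable pair (the combinatorics of the dual graph differs), and check both are $3$-dimensional by a Hilbert-scheme dimension count on the quadric. All other $\III_i$ are irreducible because the corresponding $0$-surface classification in Proposition \ref{classifydeg2} yields a single irreducible stratum once the $j$-invariant is sent to $\infty$ (the $T_{2,q,r}$ degenerations for cases (5) and (6) are each determined by a unique dominant stratum, namely $T_{2,3,7}$ and $T_{2,4,5}$ respectively, giving the open dense locus in $\III_5$ and $\III_6$).

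Finally, I would read off the incidence relations for Table \ref{table2} from the degeneration relations between the underlying $0$-surfaces: a component $\III_i$ meets $\III_j$ precisely when the stable pairs in the generic stratum of $\III_i$ can be further degenerated to those in $\III_j$ via a toric/non-toric blow-up chain of anticanonical pairs, or via gluing with an elliptic-ruled intermediate surface in the Kulikov model, and applying a twist. The main obstacle here is the same as in the Type II analysis (cf.\ Section \ref{secttype2}): one must keep careful track of the polarizing divisor, since different choices of the lift $\calL$ of the polarization across the central fiber produce a priori different boundary strata, and the relevant incidences are visible only after all twists have been made and the resulting $(V_i,D_i;L_i)$ are sorted by the numerical invariants $(L_i^2,L_i.D_i,D_i^2)$ of Section \ref{sectpolanti}. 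Once this bookkeeping is done, the incidences on Table \ref{table2} are a direct translation of the degeneration graph of anticanonical $0$-surfaces, exactly paralleling the Type II analysis of Propositions \ref{poverz4}, \ref{poverz2}, \ref{poverz3}, and \ref{poverz1}.
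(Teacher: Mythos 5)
Your proposal follows essentially the same route as the paper: Type III $0$-surfaces partially smooth to Type II ones by the classification of Proposition \ref{classifydeg2}, the dimensions drop by one with the loss of the $j$-modulus of $E$, $\III_3$ splits because the elliptic quartic on the quadric has two distinct codimension-one degenerations, and the incidence relations are obtained case by case while tracking the polarizing divisor and the twists, exactly as in Section \ref{secttype3}. Two small slips worth noting: your labels for $\III_\gamma$ and $\III_\delta$ are swapped relative to Table \ref{table2} (in the paper $\III_\gamma$ is the case $D=C_1\cup C_2$ of two hyperplane sections and $\III_\delta$ the irreducible nodal quartic), and Lemma \ref{partialsmooth} concerns smoothing the $\Er$ singularity of an elliptic ruled component rather than smoothing the nodal anticanonical cycle, so the Type III to Type II smoothing really rests on the direct inspection of cases (1)--(6) that you also invoke.
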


\begin{remark}
We note that the statement that all $0$-surfaces of Type III have a deformation to a Type II polarized anticanonical pair is only true for low degrees. For instance, the surfaces $\bF_n$ (for $n\ge 3$) carry an effective anticanonical divisor of Type III, but not one of Type II. Thus, for large degrees, at least a priori, there might be degenerations of Type III that are not limits of Type II degenerations.
\end{remark}

We will denote the six Type III boundary components by $\textrm{III}_i$ for $i\in \{1,\dots,6\}$ according to Table \ref{table1} and Proposition \ref{classifydeg2}. The generic point of each of these components was already described. Also, their basic structure is similar to that of their Type II counterparts (see  Propositions \ref{poverz4}, \ref{poverz2}, \ref{poverz3}, and \ref{poverz1}). The only significant difference is that the gluing of the Type III strata is more involved, reflecting the fact that it is easier for the polarizing divisor to pass through a log canonical center (i.e. $H$ might pass through a triple point, or  contain a component of the anticanonical cycle vs. $H$ has to  contain the anticanonical curve in the Type II case). A summary of the strata resulting from incidence of several Type III boundary components $\textrm{III}_i$ is given in Table \ref{table2} below. Note that the components $\textrm{III}_i$ for $i\neq 3$ are irreducible, but for $i=3$ we have a decomposition in irreducible components $\textrm{III}_3=\textrm{III}_\gamma\cup \textrm{III}_\delta$. We include $\textrm{III}_\gamma$ and $\textrm{III}_\delta$ in the table as they are included in some other components $\textrm{III}_i$. 

\begin{table}[htb!]
\renewcommand{\arraystretch}{1.4}
\begin{tabular}{|l|l|l|l|}
\hline
&Description (generic point)& dim& Contained in\\
\hline\hline
$\alpha$&$X=V_0\cup_DV_1$, $V_i\cong \bP^2$, $D$ nodal cubic&1&$\textrm{III}_1$, $\textrm{III}_\beta$, $\textrm{III}_\delta$\\
\hline
$\beta$& $X^\nu$ is a degree $2$ del Pezzo with an $A_1$ at $p$, $p\in D$&8& $\textrm{III}_4$, $\textrm{III}_6$\\
\hline
$\gamma$& $X^\nu$ is quadric in $\bP^3$, $D$ is the union of two conics&3& comp. of $\textrm{III}_3$, $\textrm{III}_\beta$\\
\hline
$\delta$& $X^\nu$ is a quadric in $\bP^3$, $D$ is a nodal quartic&3& comp. of $\textrm{III}_3$, $\textrm{III}_5$\\
\hline
$\epsilon$& $X^\nu$  quadric, $D=C\cup L_1\cup L_2$, $C$ conic, $L_i$ line&2&  $\textrm{III}_{\gamma}$, $\textrm{III}_\delta$\\
\hline
$\phi$&$X=V_0\cup_DV_1$, $V_0$ deg 1  dPezzo, $V_1^\nu\cong\bP^2$, $D$ nodal&9&$\textrm{III}_2$, $\textrm{III}_5$\\
\hline
$\zeta'$&$X=V_0\cup_DV_1$,  $V_i^\nu\cong\bP^2$, $D$ nodal&0&$\textrm{III}_\phi$\\
\hline
$\zeta$&$X=V_0\cup_DV_1$, $V_i\cong \bP^2$, $D$ is a triangle&0&$\textrm{III}_\alpha$, $\textrm{III}_\epsilon$\\
\hline
\end{tabular}
\vspace{0.2cm}
\caption{The incidence of Type III boundary components in $\overline \calP_2$}\label{table2}
\end{table}

\begin{remark}
As for Table \ref{table1}, when describing the stable $(X,H)$ corresponding to the generic point of a boundary stratum we ignore the polarizing divisor $H$. The dimension is the dimension of the stratum in $\overline\calP_2$ and thus takes into account $H$. Note that sometimes $X$ has positive dimensional stabilizer, leading to strata of dimension less than $2$ (compare Rem. \ref{remdef}). Finally, $D$ refers to a cycle of rational curves which is an anticanonical divisor on the normalized components of $X$. In some of the cases $D$ passes through a canonical singularity of (the normalization of) $X$;  a resolution of the singularity will bring $X$ and $D$ in a standard form. 
\end{remark}

To begin, we note that the gluing of Type III strata will reflect the structure of the boundary in the GIT quotient $\widehat \calM$ (see Figure \ref{fig4}) and 
the gluing of the type II components in $\overline{\calP}_2$. Namely, we recall that in the GIT quotient $\widehat \calM$, the Type III stratum is mapped to the rational curve $\hat\tau\cup \zeta$. The point $\zeta$ corresponds to the gluing of all strata. Similarly, the affine curve $\hat \tau$ corresponds to the gluing of the strata corresponding to $E_8^2+A_1$ and $D_{16}+A_1$. At the level of Type II pairs, the only gluing occurs for $E_7+D_{10}$ cases (A) and (B) (corresponding to $\textrm{III}_4$ and $\textrm{III}_6$) and for $E_8^2+A_1$ cases (A) and (B) (corresponding to $\textrm{III}_2$ and $\textrm{III}_5$). Using this information, we now identify for each Type III component $\textrm{III}_i$ some substrata along which the given component glues to other Type III components.

\begin{remark}
In general, given a boundary pair $(X,H)$ in $\calP_d$, further degenerations of it will have larger or equal number of components. It follows that the two boundary components $\III_1$ and $\III_2$ that parameterizes degenerations $X=V_1\cup_EV_2$ with two components are disjoint. These two boundary components will meet the other boundary components along $\III_\alpha$ and $\III_\zeta\in\III_\alpha $ for $\III_1$ and along $\III_\phi$ and   $\III_{\zeta'}\in\III_\phi$ for $\III_2$.  The points $\III_\zeta$ and $\III_{\zeta'}$ are in a certain sense the deepest degenerations for degree $2$ $K3$ pairs. They correspond to the so called {\it pillow degenerations} of $K3$ surfaces (e.g. \cite{pillow}), i.e. unions of $\bP^2$'s (polarized by $\calO(1)$) glued accordingly to the combinatorics of a triangulation of $S^2$ with $d$ triangles (see also Rem. \ref{mostdegdp}). 
An on-going project of Gross--Hacking--Keel (e.g. \cite{ghk}) investigates the deformations of such pillow surfaces (in all degrees) and thus (in particular) describes neighborhoods of $\III_\zeta$ and $\III_{\zeta'}$ in $\overline\calP_2$.  
\end{remark}

\subsection{$A_{17}$ case}
As already mentioned in Proposition \ref{poverz4}, the closure of the Type II locus in this case is still a $\bP^2$-bundle over $\overline{\II}_{A_{17}}\cong \bP^1$. The corresponding Type III stratum is $\III_1$ and is two dimensional. It parameterizes stable pairs $(X,H)$ where $X$ is the union of two $\bP^2$ glued along a nodal cubic $C$, $H$ corresponds to the choice of a line not passing through the nodes of $C$ (compare \S\ref{sectexample}). The generic case is $C$ is irreducible nodal; the stable pairs in this case belong only to the component $\III_1$. The component $\textrm{III}_1$ will be glued to other components along the locus where $C$ becomes reducible. We denote by $\textrm{III}_\alpha\subset \III_{1}$ the rational curve corresponding to $C$ reducible and by $\textrm{III}_\zeta\in\textrm{III}_\alpha $   the point corresponding to  $C$ becoming a triangle. 

Note that the entire component $\textrm{III}_1\subset \overline \calP_2$ maps to the point $\zeta$ in $\widehat \calM$ (via $\overline \calP_2\dashrightarrow \widehat \calP_2\to \widehat \calM$; N.B. the $\textrm{III}_1$ locus is not affected by the flip of Section \ref{sectslclimit}). Note also that the point $\textrm{III}_\zeta\in \overline \calP_2$ corresponds to the minimal orbit associated to $\zeta\in \widehat \calM$. We reiterate here that the main point is that at the level of pairs the moduli functor is separated and thus $\textrm{III}_\zeta$ corresponds to a single geometric object, in contrast to the point $\zeta$ which hides several orbits. 

\subsection{$E_7+D_{10}$ case} 
As already discussed, in this case there are two geometric possibilities: 
\begin{itemize}
\item[($\textrm{III}_4$)] Degenerations of $E_7+D_{10}$ (A): the elliptic section of the degree $2$ del Pezzo becomes nodal, or, in terms of sextics, a quartic plus a tangent line to it  (the line being counted with multiplicity $2$).
\item[($\textrm{III}_6$)] Degenerations of $E_7+D_{10}$ (B): the $\Es$ singularity degenerate to a cusp singularity $T_{2,4,5}$ and then further to other cusps  of type $T_{2,q,r}$ with $q\ge 4, r\ge 5$.
\end{itemize} 

For a fixed invariant $j$, the Type II components $E_7+D_{10}$ (A) and (B) 
are glued along a curve (see Prop. \ref{poverz2}). The Type III limit (i.e. $j\to \infty$) of this curve in $\overline \calP_2$ is the point $\textrm{III}_\zeta$. Thus, we have $\textrm{III}_\zeta\subseteq \textrm{III}_4\cap \textrm{III}_6$. However, it is immediate to see that the intersection of these two Type III components is larger. Namely, the maximal stratum which is a common degeneration of both the degree $2$ del Pezzo and $T_{2,4,5}$ cases corresponds to the double cover of $\bP^2$ branched in a nodal quartic with a double line passing through it. From the del Pezzo perspective, this would be a nodal degree $2$ del Pezzo with a hyperplane section through it (as an anticanonical section). From the perspective of cusp singularities, this is a degenerate cusp singularity, which has a partial smoothing to cusp singularities of type $T_{2,4,r}$. We call this stratum $\mathrm{III}_\beta$. Note that $\mathrm{III}_\alpha\subset \mathrm{III}_\beta$ (and then $\mathrm{III}_\zeta\subset \mathrm{III}_\alpha\subset \mathrm{III}_\beta$). 

We note that there is another special stratum (which is contained in $\textrm{III}_\beta$) in this case: the case of  the double cover $X$ of $\bP^2$ branched along  two double lines plus a generic quadric. The intersection of the two double lines leads to a degenerate cusp singularity, which has a partial smoothing to $T_{2,q,r}$ with $q,r\ge 5$. The normalization of $X$ will be a quadric in $\bP^3$  and the double curves of the normalization will be the union of two hyperplane sections (i.e. $(1,1)$ curves in the case $X^\nu\cong \bP^1\times \bP^1$). We call this stratum $\textrm{III}_{\gamma}$.

\subsection{$D_{16}+A_1$ case} We recall that the type II degenerations corresponding to this case are surfaces $X$ such that their normalizations are  quadrics in $\bP^3$ and such that the double curve $E$ is an elliptic quartic curve in $\bP^3$. These are obtained by considering double covers of type $z^2=q_0^2q$ of $\bP^2$ (where $q_0$ and $q$ are two conics). There are two distinct Type III degenerations (in codimension $1$) in this situation: either $V(q_0)$ becomes singular (i.e. union of two lines) or the two conics become tangent. The first case was labeled as $\textrm{III}_{\gamma}$ above. We call the second case $\textrm{III}_{\delta}$. In other words, the Type III component in this case is reducible:
 $$\textrm{III}_3=\textrm{III}_{\gamma}\cup \textrm{III}_{\delta}.$$
The two components intersect in the stratum corresponding to the double cover of $\bP^2$ branched in two double lines together with a conic which is tangent to one of the lines. Equivalently, at the level of quadrics in $\bP^3$, this corresponds to the case that the anticanonical divisor $D$ splits as $L_1\cup L_2\cup C$ with $L_1$ of type $(1,0)$, $L_2$ of type $(0,1)$ and $C$ of type $(1,1)$. We call this case $\textrm{III}_\epsilon$. 

\subsection{$2E_8+A_1$ case} Here again we have two possibilities:
\begin{itemize}
\item[($\textrm{III}_2$)] Degenerations of $2E_8+A_1$ (A): the two degree $1$ del Pezzo surfaces are glued along a nodal section. This can degenerate to the case when one or both of the del Pezzo surfaces become cones over this nodal curve.  We denote this case by $\textrm{III}_\phi$  (a degeneration of the Type II case  $2E_8+A_1$ (C)). 
\item[($\textrm{III}_5$)] Degenerations of $2E_8+A_1$ (B): the $\Ee$ becomes a $T_{2,3,7}$ or  worse singularity.  In the closure,  we can obtain case $\textrm{III}_\phi$ as a replacement of the case the polarizing divisor passes through the $T_{2,3,7}$ singularity (compare Prop. \ref{poverz1}), or case $\textrm{III}_\delta$  (a double conic plus another conic tangent to it) which is a degenerate cusp singularity that has a partial smoothing to $T_{2,3,7}$. 
\end{itemize}

\begin{remark}\label{mostdegdp}
As already noted, a degree $1$ del Pezzo can degenerate to the cone over an elliptic curve of degree $1$ giving an $\Ee$ singularity (the Type II case $2E_8+A_1$ (C)). This can further degenerate to the cone $\overline V$ over an irreducible nodal curve $C$ of arithmetic genus $1$ (case $\textrm{III}_\phi$ from above). We note that the normalization $V$ of $\overline V$  is  $\bP^2$. In fact,  $\overline V$ is obtained from $V\cong \bP^2$ by gluing together two lines. The nodal curve $C$ is the image of a third line (forming a triangle $D$) in $\bP^2$. In other words, $(\overline V,C)$ (or the normalized pair $(V,D)$) regarded as a $0$-component of a degree two degeneration fits into the classification given by Proposition \ref{classifydeg2}. However, the gluing in the limit surface $X$ is somewhat surprising and hints to the difficulty of an analogous classification for larger degrees. Finally, we note that the two surfaces corresponding to $\III_\zeta$ and $\III_{\zeta'}$ are obtained by gluing $2$ copies of $\bP^2$ with a marked triangle in each according to the two possible triangulations of $S^2$ with $2$ triangles (see also \cite{thurston}, \cite{triangulationk3}). 
\end{remark}

\bibliography{refk3}
\end{document}